\documentclass{amsart}
\usepackage{graphicx}
\usepackage{amsmath}
\usepackage{amssymb}
\usepackage{amsthm}
\usepackage{amsfonts}
\usepackage{hyperref}
 \usepackage{tikz}
\usetikzlibrary{arrows.meta,decorations.pathreplacing,calc,positioning,fit,
	shapes.geometric,backgrounds,decorations.markings}
\tikzset{
	bp/.style={circle,fill=black,inner sep=1.3pt},
	rt/.style={circle,fill=white,draw=black,inner sep=1.4pt},
	edge1/.style={line width=1.3pt},
	edge2/.style={line width=0.95pt},
	edge3/.style={line width=0.65pt},
	edged/.style={line width=0.5pt,densely dotted},
	lab/.style={font=\scriptsize},
	brc/.style={decorate,decoration={brace,amplitude=5pt},line width=0.6pt},
}	

\DeclareMathOperator{\End}{\mathrm{End}}
\DeclareMathOperator{\diam}{\mathrm{diam}}
\DeclareMathOperator{\dist}{\mathrm{dist}}
\DeclareMathOperator{\mesh}{\mathrm{mesh}}
\DeclareMathOperator{\Int}{\mathrm{Int}}
\DeclareMathOperator{\G}{\mathcal{G}}
\DeclareMathOperator{\Reg}{\mathrm{Reg}}
\newcommand{\Lev}{\mathcal{L}}

\newcommand{\htop}{h_{\textnormal{top}}}

\newtheorem{theorem}{Theorem}[section]
\newtheorem{lemma}[theorem]{Lemma}
\newtheorem{corollary}[theorem]{Corollary}
\newtheorem{question}[theorem]{Question}
\newtheorem{step}{Step}

\newtheorem{claim}{Claim}

\theoremstyle{definition}
\newtheorem{definition}[theorem]{Definition}

\theoremstyle{remark}
\newtheorem{remark}[theorem]{Remark}
\newtheorem*{remarknn}{Remark}

\title[MMEs for Markovian dynamics on the Gehman dendrite]{Measures of maximal entropy for Markovian dynamics on the Gehman dendrite}
\author{Piotr Oprocha} 
\address[P. Oprocha]{
	National Supercomputing Centre IT4Innovations, University of Ostrava,
	IRAFM,
	30. dubna 22, 70103 Ostrava,
	Czech Republic
    }
\email{piotr.oprocha@osu.cz}

\keywords{Gehman dendrite, topological entropy, measures of maximal entropy, topological Markov chains, transitivity, mixing}
\subjclass[2020]{Primary: 37B45, 37B40; Secondary: 37B10}

\author{Jakub Tomaszewski}
\address[J. Tomaszewski]{
	AGH University of Krakow, Faculty of Applied Mathematics,
    al.\ Mickiewicza 30,
    30-059 Krak\'ow, Poland.
    -- $\&$ --
    University of Maryland,
    Department of Mathematics,
    William E. Kirwan Hall,
    4176 Campus Dr., College Park,
    MD 20742 USA.
}
\email{tomaszew@agh.edu.pl}

\begin{document}
\begin{abstract}
    We study transitive dynamical systems on the Gehman dendrite $\mathcal{G}$ for which the endpoint set $\mathrm{End}(\mathcal{G})$ is invariant. Our goal is to approximate such systems by maps whose measure-theoretic behaviour at maximal entropy is governed by an explicit countable Markov structure. We introduce a class of Markovian maps, encode their dynamics by countable Markov graphs, and use the criteria of Vere-Jones, Gurevich, Salama and Ruette to control the existence of measures of maximal entropy. The main theorem gives two arbitrarily close mixing Markovian perturbations of any given system in the considered class: one has a unique measure of maximal entropy, while the other has none.
\end{abstract}

\maketitle

\section{Introduction}
The interaction between topological and measure-theoretic complexity is a central theme in modern topological dynamics and ergodic theory. One of its basic manifestations is the variational principle associated with topological entropy. For a dynamical system $(X,f)$, the topological entropy $h_{top}(X,f)$ is the supremum of the measure-theoretic entropies $h_\mu(X,f)$ over all $f$-invariant probability measures $\mu$. An ergodic measure attaining this supremum is called a measure of maximal entropy (MME). Thus, the existence and uniqueness of MMEs record how the topological complexity of a system is represented at the level of invariant measures.

For smooth systems this question is subject to strong compactness and regularity mechanisms. Newhouse proved that smooth diffeomorphisms on compact manifolds without boundary admit MMEs \cite{Newhouse1989}. A subsequent result by Buzzi, Crovisier and Sarig shows that a smooth diffeomorphism of a closed surface with positive topological entropy has only finitely many MMEs, and that the MME is unique in the transitive case \cite{Buzzi2022}.

In their recent work, Buzzi, Crovisier, and Sarig introduced the concept of strong positive recurrence (SPR) for surface diffeomorphisms \cite{Buzzi2025}. They showed that all smooth surface diffeomorphisms with positive entropy are SPR. Thus, the SPR property is common, but also very powerful. It follows that all SPR dynamical systems can be encoded by countable-state topological Markov chains, whose transition matrices act with a spectral gap - and this property has been used to show a variety of stochastic properties, ranging from exponential decay of correlations (and hence exponential mixing) to almost sure invariance principles and large deviations for the ergodic measures (see also \cite{Gouzel2010, Hardy1988, Kifer1990, Pollicott1990, Ruelle2004}).

These previously mentioned countable topological Markov chains have been of core interest in modern theory of dynamical systems. Their utility in encoding more complex systems has proven extremely useful not only in \cite{Buzzi2022, Buzzi2025}, but also in e.g. \cite{Encode1,Encode2,Encode3,Encode4,Encode5,Encode6,Encode7}. Recent work has also extensively explored the thermodynamic formalism for topological Markov chains \cite{Thermo1,Thermo2,Thermo3,Thermo4,Thermo7,Thermo5,Thermo6}, as well as the relations between and characterizations of topological Markov chains \cite{Coding1,Coding3,Coding2,Coding4,Coding3.5}. A landmark achievement in this area is the complete classification of strongly positive recurrent topological Markov chains by Boyle, Buzzi, and Gomez \cite{Boyle2006}. Their work provides a full classification, up to entropy conjugacy, of the natural extensions of smooth entropy-expanding maps.

When departing from smooth systems on manifolds towards low dimensional branched spaces, the structural behaviour of MMEs can exhibit strikingly different properties, depending on the characteristics of the underlying system. Hofbauer showed that finitely piecewise monotone maps mirror the behaviour of smooth surface diffeomorphisms: positive entropy guarantees finite positive number of MMEs and the MME is unique in the transitive case \cite{Encode2, Encode3}. Crucially, these maps can be encoded by a finite topological Markov chain and thus Hofbauer's results are in line with Parry's findings \cite{Parry}. Moreover, under strong rigidity conditions the same assertions apply even when encoding by countably infinite topological Markov chains. In \cite{Encode1}, Buzzi showed that Hofbauer's results can be extended to smooth interval maps, for which finite topological Markov chains may not be sufficient. However, weakening these assumptions to $C^r$ interval maps yields transitive \cite{Encode1} or mixing \cite{Ruette_natural_extension} systems without an MME.

These results motivate the following comparison problem: to what extent can analogous conclusions survive in nontrivial one dimensional continua where finite Markov partitions are no longer available? Moving to more complex one dimensional structures like dendrites — compact, connected, locally connected metric spaces containing no simple closed curves — presents fundamentally new challenges. Dendrites serve as a natural laboratory bridging one dimensional and higher dimensional dynamics; they are complex enough to mirror rich symbolic behaviour while remaining low dimensional enough to be analytically tractable.

Recent work has clarified the possible entropy values for dynamics on dendrites. For instance, Kwietniak and the authors resolved the ``entropy paradox'' on the Gehman dendrite $\mathcal{G}$ by proving that the infimum of the topological entropy for pure mixing maps is zero \cite{Entropy_paradox_dendryty}, thus in agreement with the behaviour of exact maps \cite{Spitalsky2013}. These constructions allow for arbitrary positive values of the topological entropy and hence provide a full attainable range, as the topological entropy of transitive maps on $\G$ must be positive \cite{Positive_entropy_Gehman}. However, much less is known about the measures that realize these values. Thus, these previously mentioned results leave open the finer question addressed here: 

\begin{question}\label{que:main}
Can one force, or destroy, measures of maximal entropy by arbitrarily small perturbations?     
\end{question}

Since the Gehman dendrite has countably many branch points and a Cantor set of endpoints, finite Markov partition methods are inadequate for such a nontrivial space. The natural substitute is a countable symbolic representation and the structural graph criteria for countable topological Markov chains developed by Vere-Jones \cite{Vere}, Gurevich \cite{Gurevich_infinite_graphs_no_MME, Gurevich_MME_iff}, Salama \cite{Salama_entropy_R} and Ruette \cite{Ruette}. It is worth noting that similar techniques have been used to study tame graphs, yet another class of nontrivial one dimensional continua \cite{BARTO2019}.

We address Question~\ref{que:main} for transitive maps $f\colon \G\to\G$ satisfying $f(\End(\G))=\End(\G)$. The endpoint invariance assumption keeps the Cantor subsystem separated from the interior countably infinite coding and allows us to compare the entropy of the full system with the entropy of the induced countable graph. The approximating systems belong to a structured class that we call Markovian dynamics. Informally, a map is $P$-Markovian if it is piecewise linear on the free arcs determined by a countable invariant set $P\subset \G\setminus \End(\G)$ and if its transitions are recorded by a countable directed graph. This gives enough flexibility to approximate arbitrary systems in the class, but enough symbolic structure to decide whether MMEs exist.

The main result says that this class is dense in two opposite ways. For every transitive system $(\G,f)$ with invariant endpoint set and every $\gamma>0$, there are two mixing Markovian maps $\hat f_{\textnormal{NME}}$ and $\hat f_{\textnormal{MME}}$ within $\gamma$ of $f$ in the supremum metric, but with opposite behaviour at maximal entropy:
\begin{enumerate}
    \item The system $(\G, \hat{f}_{\text{NME}})$ is encoded by a countable topological Markov chain based on a transient graph, and consequently admits no measures of maximal entropy.
    \item The system $(\G, \hat{f}_{\text{MME}})$ is encoded by a countable strongly positive recurrent topological Markov chain, forcing the existence of a unique measure of maximal entropy.
\end{enumerate}

Both constructions first retract the dynamics to a sufficiently deep finite tree and then insert controlled countable Markovian structures. In the non-MME case the associated graph is arranged to be transient. In the MME case the graph is further modified by a careful inclusion of a finite high-entropy component which yields strong positive recurrence. 
Additionally, both perturbations are obtained by changing the original map locally on a small scale.

The paper is organized as follows. Section 2 recalls the necessary notions on dendrites and topological dynamics,  and later treats on the Markov and Markovian maps. Section 3 establishes the entropy-preserving correspondence between one-sided countable topological Markov chains and the Markovian maps used later. Section 4 gives the explicit approximation construction and proves the main theorem. Section 5 discusses possible extensions beyond the Gehman dendrite and formulates related open questions.

\section{Preliminaries}
We denote by $\mathbb{N}=\{0,1,\ldots\}$ the set of non-negative integers. For $k\in \mathbb{N}$ we write
$\mathbb{N}_k=\mathbb{N}\cap [k,\infty)$.

\subsection{Continua, topological graphs and dendrites}\label{the_family_of_trees}
A \emph{continuum} is a compact, connected metric space. A continuum is non-degenerate if it contains at least two points.
A \emph{dendrite} is a non-degenerate locally connected continuum without a subset homeomorphic to a simple closed curve. An \emph{arc} in a continuum $X$ is a set $\mathcal{I}\subseteq X$ that is a homeomorphic copy of the interval $[0,1]$. In other words, $\mathcal{I}$ is an arc if there exists a continuous bijection $\varphi\colon [0,1]\to \mathcal{I}\subset X$. In this situation, we call points $\varphi(0)$ and $\varphi(1)$ the \emph{endpoints} of $\mathcal{I}$. We will denote $\mathcal{I}=[x, y]$, where $x=\varphi(0), y=\varphi(1)$.

A \emph{topological graph} is a continuum $G$ composed of a finite number of arcs, any two of which either have a common endpoint or are disjoint. We distinguish the family of \emph{topological trees}, which are topological graphs with no simple closed curve. Clearly, topological trees lie in the intersection of topological graphs and dendrites.

Let $X$ be a dendrite. We write $\End(X)$ for the set of its endpoints (points $x\in X$ such that $X\setminus\{x\}$ is connected), $\Reg(X)=X\setminus \End(X)$ and $B(X)$ for its \emph{branch points}, meaning points $b\in X$ such that $X\setminus \{b\}$ has at least three connected components. 

In any dendrite $X$, $B(X)$ is always at most countable and is empty if and only if $X$ is an arc, while $\End(X)$ is always nonempty. An arc $A\subseteq X$ is a \emph{free arc} if all points in $A$ except possibly the endpoints of $A$ are not branch points in $X$. Let $b_1, b_2$ be two adjacent branch points in $X$ (where by \emph{adjacency} of two points in $X$ we mean nearest proximity with respect to the metric). We will say that an arc connecting $b_1, b_2$ is an \emph{edge} in $X$ and denote this arc by $[b_1, b_2]$. Since dendrites are uniquely arcwise connected, we can extend this notation to all points $x, y\in X$ and denote by $[x, y]$ a unique arc connecting $x, y$. To simplify the notation, we will denote $\Int([x, y])=(x, y)$. At the same time, we will use the standard notation $\overline{A}$ for the closure of any set $A$.

In this paper, we will examine maps on the Gehman dendrite $\G$, a continuum such that its endpoint set $\End(\G)$ is homeomorphic to the standard Cantor ternary set and each branch point $b\in B(\G)$ is of order 3.

\begin{theorem}[{\cite[Theorem 4.1]{Charatonik}}]\label{Gehman_unique}
    The Gehman dendrite is a unique (up to homeomorphism) continuum such that the endpoint set $\End(X)$ is homeomorphic to the standard Cantor ternary set and each branch point $b\in B(X)$ is of order 3.
\end{theorem}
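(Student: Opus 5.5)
We have to prove uniqueness: any dendrite $X$ with $\End(X)$ homeomorphic to the Cantor set and every branch point of order $3$ is homeomorphic to a fixed model $\G$. First, $X$ is a dendrite: a continuum with a Cantor set of endpoints is non-degenerate, and we take local connectedness and the absence of simple closed curves as part of the hypothesis. The idea is to encode $X$ by the full binary tree and build a homeomorphism level by level, matching nested subtrees.

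\emph{Step 1: the endpoints determine $X$.} A dendrite is the closure of the union of the arcs joining its endpoints. More precisely, $X=\overline{\bigcup_{e,e'\in\End(X)}[e,e']}$, and every point of $X$ lies in some arc $[e,e']$ between endpoints. This uses that every arc in a dendrite extends to an arc whose ends are endpoints.

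\emph{Step 2: combinatorial coding.} Fix a branch point $b_\emptyset$. Since $b_\emptyset$ has order $3$, $X\setminus\{b_\emptyset\}$ has exactly three components. Each component closure is a subdendrite in which $b_\emptyset$ is an endpoint, every other branch point still has order $3$, and the endpoint set is clopen in $\End(X)$, hence again a Cantor set. Inductively, in each such piece $Y$ with distinguished endpoint $p$, choose the branch point $b$ nearest to $p$. Then $Y\setminus\{b\}$ splits into the arc $[p,b)$ and two further pieces. The branch point $b$ exists and $[p,b]$ is a free arc, because the Cantor endpoint set of $Y$ accumulates only at endpoints, so branch points cannot accumulate at $p$ along the arc.

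This labels the branch points by finite words. It also yields a homeomorphism from $\End(X)$ onto the boundary of a ternary-then-binary tree, with cylinders corresponding to the endpoint sets of the pieces.

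\emph{Step 3: mesh going to zero and assembly.} The key point, and the main obstacle, is to show that the diameters of the pieces at depth $n$ tend to $0$ uniformly. I would argue by contradiction. Suppose pieces $Y_{w_n}$ at depths $n\to\infty$ have diameter at least $\varepsilon$. Then, using that for every $\varepsilon>0$ a dendrite has only finitely many components of $X\setminus\{b\}$-type pieces of diameter greater than $\varepsilon$ (the null-family property of locally connected continua), we get a contradiction, since the pieces along distinct branches are disjoint apart from their attachment points.

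Given this, define $h\colon X\to\G$ as follows. Apply the same coding to $\G$. Send each free arc $[b_w,b_{wi}]$ of $X$ homeomorphically onto the corresponding arc of $\G$, and send each endpoint to the endpoint with the same address. Continuity of $h$ and of $h^{-1}$ follows from the fact that the mesh tends to $0$ in both spaces. Bijectivity follows from Step 1 together with the coding: every non-end point lies on a unique arc $[b_w,b_{wi}]$. Since $h$ is a continuous bijection of compacta, it is a homeomorphism.

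One should also check that the pieces exhaust all branch points, so that no branch point is missed. This holds because any branch point separates two endpoints, and so it eventually appears as the nearest branch point of some piece.
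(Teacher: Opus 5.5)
The paper gives no proof of this statement; it is quoted from \cite[Theorem 4.1]{Charatonik}, so your argument has to stand on its own. The architecture is the natural one: code the branch points by a ternary-then-binary tree of pieces, map edges to edges, and extend to endpoints by addresses. However, Step 3, which you correctly flag as the crux, is not proved, and as written it cannot be.

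\textbf{Why Step 3 fails as written.} Your argument never uses that $\End(X)$ is closed, and without that hypothesis the mesh condition is false. Pieces of diameter $\ge\varepsilon$ are closed under passing to the parent piece. So by K\"onig's lemma, a failure of the mesh condition produces a single decreasing chain $Y_1\supset Y_2\supset\cdots$ of pieces with $\diam Y_k\ge\varepsilon$. These pieces are nested, not ``along distinct branches'', so the null-family property for (almost) disjoint pieces says nothing about them.

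\textbf{A test case.} Take an arc $[0,2]$. At each $t_n=1-1/n$ attach, via a short arc, a Gehman copy whose diameter tends to $0$, and glue Gehman copies by their roots at $0$ and $2$.
\begin{itemize}
\item All branch points have order $3$, and your Step 2 coding runs verbatim from $b_\emptyset=t_2$: every root is a branch point, away from the accumulation point $1$.
\item Every right-hand piece rooted at $t_n$ contains $[1,2]$, so the mesh does not tend to $0$.
\item The branch point $2$ is never coded, although it separates endpoints, so your closing exhaustion argument fails as well.
\item The only hypothesis this example violates is closedness of $\End(X)$, since $1$ is a limit of endpoints.
\end{itemize}

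\textbf{The missing idea, which is where closedness enters.} Let $p_k$ be the root of $Y_k$ and $K=\bigcap_k Y_k$.
\begin{itemize}
\item For $y\in K$, the arc $[p_1,y]$ passes through $p_1,p_2,\dots$ in order, so $p_k\to q\in K$.
\item Since $K$ is a nondegenerate continuum, you may choose $y\neq q$. Then $q$ is an interior point of the arc $[p_1,y]$, hence $q\notin\End(X)$.
\item The sibling pieces $Z_k$ (the other piece at $p_{k+1}$) are pairwise disjoint subcontinua, so $\diam Z_k\to 0$ by the null property of dendrites.
\item Each $Z_k$ contains endpoints of $X$ and contains $p_{k+1}\to q$. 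Hence $q\in\overline{\End(X)}=\End(X)$, a contradiction.
\end{itemize}

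\textbf{The same lemma in Step 2.} The lemma used here (in a dendrite, branch points can accumulate only at points of $\overline{\End(X)}$, via null side branches) is also what actually justifies the nearest branch point in Step 2. Your stated reason there skips the passage from ``endpoints do not accumulate at $p$'' to ``branch points do not accumulate at $p$''.

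\textbf{After the fix.} Once mesh $\to 0$ is established, exhaustion of branch points follows: an uncoded branch point would lie in a nested chain of pieces shrinking to an endpoint. The rest of your assembly then goes through.
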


The \textit{root} of $\mathcal{G}$ is a distinguished point of order 2. The Gehman dendrite can be pictured as an infinite binary tree where each branch point, as well as the root, has exactly one parent and each branch point has exactly two children. We use finite binary words to label the root and branch points in $\G$. First, we label the root with the empty word $\lambda$, that is, we  denote by $b_\lambda$ the root of $\G$. For any branch point (or the root itself) $b_\omega$ of $\G$ labeled with binary word $\omega$, we label its ``left" child as $\omega0$ (append $0$ to the word $\omega$) and we label its  ``right" child as $\omega1$ (append $1$ to the word $\omega$). We define the \emph{$n$-th level} in $\G$ to be the union of all edges $[b_\omega, b_{\omega'}]$, where $\omega'\in\{0, 1\}^n$ and denote it by $\mathcal{L}_n$. In particular, the edges $[b_\lambda, b_0], [b_\lambda, b_1]$ form the first level $\mathcal{L}_1$, the four edges $[b_0, b_{00}], [b_0, b_{01}], [b_1, b_{10}], [b_1, b_{11}]$ form $\mathcal{L}_2$ and so on. Observe that each level $\mathcal{L}_n$ is composed of $2^n$ edges in $\G$.

\begin{figure}[h]
	\centering
	\begin{tikzpicture}[scale=0.95]
		\begin{scope}[on background layer]
			\fill[blue!7,rounded corners=3pt]
			(0,0.35) -- (-6.5,-3.55) -- (6.5,-3.55) -- cycle;
		\end{scope}
		\node[lab,blue!55!black] at (-0,-2.2) {$T_2=\Lev_1\cup\Lev_2$};
		
		\node[rt,label={[lab]above:{$b_\lambda$ \scriptsize(order $2$)}}] (r) at (0,0) {};
		\node[bp,label={[lab]left:$b_0$}]  (b0) at (-4.2,-1.8) {};
		\node[bp,label={[lab]right:{$b_1$ \scriptsize(order $3$)}}] (b1) at ( 4.2,-1.8) {};
		\node[bp,label={[lab]left:$b_{00}$}]  (b00) at (-5.9,-3.2) {};
		\node[bp,label={[lab]right:$b_{01}$}] (b01) at (-2.5,-3.2) {};
		\node[bp,label={[lab]left:$b_{10}$}]  (b10) at ( 2.5,-3.2) {};
		\node[bp,label={[lab]right:$b_{11}$}] (b11) at ( 5.9,-3.2) {};
		\foreach \x/\n in {-6.5/c0,-5.3/c1,-3.1/c2,-1.9/c3,1.9/c4,3.1/c5,5.3/c6,6.5/c7}
		\node[bp] (\n) at (\x,-4.1) {};
		
		\draw[edge1] (r)--(b0); \draw[edge1] (r)--(b1);
		\draw[edge2] (b0)--(b00); \draw[edge2] (b0)--(b01);
		\draw[edge2] (b1)--(b10); \draw[edge2] (b1)--(b11);
		\draw[edge3] (b00)--(c0); \draw[edge3] (b00)--(c1);
		\draw[edge3] (b01)--(c2); \draw[edge3] (b01)--(c3);
		\draw[edge3] (b10)--(c4); \draw[edge3] (b10)--(c5);
		\draw[edge3] (b11)--(c6); \draw[edge3] (b11)--(c7);
		\foreach \n in {c0,c1,c2,c3,c4,c5,c6,c7}{
			\draw[edged] (\n)-- ++(-0.35,-0.7);
			\draw[edged] (\n)-- ++( 0.35,-0.7);
		}
		
		\draw[brc] (7.15,0.0) -- (7.15,-1.8) node[midway,right=4pt,lab]{$\Lev_1$ (length $\sim 4^{-1}$)};
		\draw[brc] (7.15,-1.8) -- (7.15,-3.2) node[midway,right=4pt,lab]{$\Lev_2$ ($\sim 4^{-2}$)};
		\draw[brc] (7.15,-3.2) -- (7.15,-4.1) node[midway,right=4pt,lab]{$\Lev_3$ ($\sim 4^{-3}$)};
		
		\draw[brc,decoration={mirror}] (-6.9,-4.95) -- (6.9,-4.95)
		node[midway,below=5pt,lab]{$\End(\G)\;\cong\;$ Cantor set};
	\end{tikzpicture}
	\caption{ Schematic view of: the Gehman dendrite $\G$, levels $\Lev_n$, trees $T_n$, labeling $b_\omega$ and scaling distance on $\Lev_n$.}
	\label{fig:gehman}
\end{figure}

\begin{remark}\label{retraction_map}
	Using this notation, we will also specify a special family $\mathcal{T}$ of subtrees of the dendrite $\G$, where each tree $T_n=\bigcup_{k=1}^n\mathcal{L}_k$. In other words, each $T_n\in\mathcal{T}$ is a full binary tree of height $n\in\mathbb{N}_1$ and $\G, T_1, T_2,\dots$ shares the same root. Moreover, for each $T_n\in\mathcal{T}$ there exists a standard retraction map $R_n\colon \G\to T_n$ (see \cite[Chapter X.3]{Nadler}) and for any $n\in\mathbb{N}_1$ the space $\overline{\G\setminus T_n}$ is a collection of $2^n$ homeomorphic copies of the Gehman dendrite.
	
	In the paper, we will repeatedly look for subsets of $\G$ in the form of topological trees. For simplicity and clarity of reasoning, in each case we will assume that the root of every subtree called out in $\G$ is the point that is closest to the root $b_\lambda$ of $\G$.
\end{remark}

Following the labeling scheme, we associate an arc length metric $d_{\G}\colon \G\times \G \to [0,\infty)$. Firstly, we set distances on each level in $\G$. We endow edges from level $n$ with the standard arc length metric inherited from the unit interval $[0,1]$, rescaled by a factor of $\frac{1}{4^n}$. Then we compute the distance $d(x, y)$ as the length of the unique arc $[x, y]$, with respect to the distances set on each level.

\begin{remark}
    In the later stages of the paper, we will present our reasoning with respect to arbitrary edges $[b, b']$. Using the notation presented above, we will assume that $d_{\G}(b_\lambda, b)<d_{\G}(b_\lambda, b')$. In other words, the edge $[b, b']$ is oriented with respect to the root of $\G$.
\end{remark}

\subsection{Notions from Topological Dynamics}

Let $(X,f)$ be a \emph{topological dynamical system} (a TDS for short). The space $X$ is compact, metric, and $f\colon X\to X$ is a continuous surjection. Any subset $A\subseteq X$ such that $f^{-1}(A)= A$ is called an \textit{invariant subset}. Moreover, if $A$ is nonempty and closed, then we call it a \emph{subsystem} of $(X,f)$. If $A$ is a subsystem, then $(A,f|_A)$ is also a TDS. Moreover, we denote $\mathcal{O}^+(x)=\{f^{n}(x)\colon n\in\mathbb{N}\}$.

\begin{definition}\label{periodic_recurrent_def}
    Let $(X, f)$ be a TDS. We call $x\in X$ a \textit{periodic point} for $f$ if there exists $n\in\mathbb{N}_1$ such that $f^{n}(x)=x$. We call $y\in X$ a \textit{recurrent point} if for every neighborhood $U\ni y$ there exists $n_{U}\in\mathbb{N}_1$ such that $f^{n_{U}}(y)\in U$.
\end{definition}

Let $(X,f)$ and $(Y,g)$ be two TDS. We say $(Y,g)$ is a \emph{factor} of $(X,f)$ (and we call $(X,f)$ an \emph{extension} of $(Y,g)$) if there exists a \emph{factor map}, that is, a continuous surjection $\varphi\colon X\to Y$ such that $\varphi\circ f=g\circ\varphi$. If $\varphi$ is a homeomorphism, then we say that $(X,f)$ and $(Y,g)$ are \emph{conjugate}. 

\begin{definition}
A TDS $(X,f)$ is: 

\begin{itemize}
\item \emph{transitive} if for every pair of nonempty open subsets $U,V\subseteq X$ there exists $n\in\mathbb{N}$ such that $f^{n}(U)\cap V\neq\emptyset$;
\item \emph{totally transitive} if for each $n\in\mathbb{N}$ the map $f^n$ is transitive;
    \item 
\emph{(topologically) mixing} if for every pair of nonempty open subsets $U,V\subseteq X$ there exists $n_0\in\mathbb{N}$ such that for all $n\geq n_0$ we have $f^{n}(U)\cap V\neq\emptyset$;
\item \emph{exact} if for each nonempty and open subset $U\subseteq X$ there exists $n\in\mathbb{N}$ such that $f^{n}(U)=X$;
\end{itemize}
\end{definition}

We denote by $\htop(X, f)$ the \emph{topological entropy} of TDS $(X, f)$. For definition and further details, see \cite[Chapter 7, 8]{Walters}. Topological entropy $\htop(X, f)\in[0,\infty]$ is a numerical invariant  of conjugacy of TDS.

We state here only these properties of entropy that we need to use in our presentation. We will use these facts without specifically calling them in our reasoning.
\begin{theorem}\label{entropy_properties}
   The topological entropy $\htop(X, f)$ has the following properties:
   \begin{enumerate}
       \item If $(Y, g)$ is a factor of $(X, f)$, then $\htop(Y, g)\leq \htop(X, f)$. In particular, $\htop(X, f)=\htop(Y, g)$ if $f$ and $g$ are conjugate.
       \item If $A\subseteq X$ is a subsystem of $(X, f)$, then $\htop(A, f|_A)\leq \htop(X, f)$.
       \item If $n\ge 1$, then $\htop(X, f^n)=n\htop(X, f)$. In particular, if $f^n=\mathrm{Id}$, then $\htop(X, f)=0$.
              \item Let $I$ be a nonempty set of indices. If for every $i\in I$ the set $X_i\subseteq X$ is a subsystem of $(X,f)$ and $X=\bigcup_{i\in I} X_i$, then (by the variational principle)
       \begin{equation*}
           \htop(f)=\sup_{i\in I}\htop(f|_{X_i}).
       \end{equation*}
   \end{enumerate}
\end{theorem}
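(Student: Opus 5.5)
These are standard facts, so the plan is to reduce each item to the Bowen definition of entropy via $(n,\varepsilon)$-separated sets (or spanning sets) and, for the last item, to the variational principle. I would not reprove anything deep. The point is to record which classical results from \cite[Chapters 7, 8]{Walters} are being used.

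For (1), let $\varphi\colon X\to Y$ be a factor map. It is uniformly continuous, so for every $\varepsilon>0$ there is $\delta>0$ such that $d(x,x')<\delta$ implies $d(\varphi x,\varphi x')<\varepsilon$. Given an $(n,\delta)$-spanning set $E$ for $f$, the image $\varphi(E)$ is $(n,\varepsilon)$-spanning for $g$. This uses surjectivity of $\varphi$ and the relation $\varphi\circ f^k=g^k\circ\varphi$. Taking the logarithmic growth rate and letting $\varepsilon\to0$ gives $\htop(Y,g)\le\htop(X,f)$. For a conjugacy, apply the same argument to $\varphi^{-1}$ to get the reverse inequality.

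For (2), any $(n,\varepsilon)$-separated subset of a subsystem $A$ is also $(n,\varepsilon)$-separated in $X$. Hence the maximal cardinalities satisfy $s_n(\varepsilon,A)\le s_n(\varepsilon,X)$, and the inequality passes to the limit.

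For (3), I would compare separated sets for $f$ and $f^n$. An $(m,\varepsilon)$-separated set for $f^n$ is $(mn,\varepsilon)$-separated for $f$, which gives $\htop(f^n)\le n\htop(f)$. For the reverse inequality, use uniform continuity of $f,\dots,f^{n-1}$: choose $\delta$ so that $\delta$-closeness of two points implies $\varepsilon$-closeness of their first $n$ iterates. Then an $(mn,\varepsilon)$-separated set for $f$ is $(m,\delta)$-separated for $f^n$, which gives $n\htop(f)\le\htop(f^n)$. The special case $f^n=\mathrm{Id}$ follows because $\htop(\mathrm{Id})=0$: the number of separated points does not grow with $n$.

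For (4), the inequality $\ge$ is immediate from (2). For $\le$, I would use the variational principle. By ergodic decomposition it suffices to take the supremum over ergodic $\mu$. Such a $\mu$ gives full measure to a single $X_i$, provided the cover allows this. For an uncountable index set $I$ this step is delicate, and I expect it to be the main obstacle. I would first try to exploit that the support of an ergodic $\mu$ is contained in the orbit closure of a $\mu$-generic point $x$. That point lies in some $X_i$, and since $X_i$ is closed and invariant, the whole orbit closure lies in $X_i$. Therefore $\mu(X_i)=1$, so $h_\mu(f)=h_\mu(f|_{X_i})\le\htop(f|_{X_i})$. Taking the supremum over $\mu$ then gives the claim.
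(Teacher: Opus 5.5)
Your proposal is correct and follows the route the paper indicates. The paper gives no proof: it defers (1)--(3) to the standard arguments in Walters and (4) to the variational principle, which is what you do. Your generic-point argument in (4) works for an arbitrary index set $I$, since the orbit closure of a single generic point already lies in one $X_i$, so the feared obstacle does not arise. The only detail you leave implicit is in (3): passing from the subsequence $mn$ to all times $k$ uses that the maximal cardinality of $(k,\varepsilon)$-separated sets is nondecreasing in $k$.
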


\begin{theorem}[{\cite[Corollary 2.2]{Misiurewicz_hausdorff}}] \label{lipschitz_entropy_graf}
Let $(X,f)$ be a TDS on a metric space $(X,d)$. If $f\colon X\to X$ is $L$-Lipschitz for some $L>1$, that is, if $d(f(x),f(y))\le Ld(x,y)$ for every $x,y\in X$,
then $\dim_H(X)\cdot \log (L)\geq \htop(X, f)$, where $\dim_H(X)$ is the Hausdorff dimension of $X$.
\end{theorem}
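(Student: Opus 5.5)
The idea is to bound the growth of minimal $(n,\varepsilon)$-covers of $X$ using covers that witness the Hausdorff dimension. A naive approach only gives the upper box dimension: if $d(x,y)\le \varepsilon L^{-n}$, then $d(f^k x,f^k y)\le\varepsilon$ for all $0\le k<n$. The Hausdorff version requires covers by sets of \emph{varying} diameters, so I will concatenate such sets along orbits, in the spirit of Misiurewicz's argument. Throughout I use the characterization $\htop(X,f)=\lim_{\varepsilon\to0}\limsup_n \frac1n\log \mathcal N(n,\varepsilon)$, where $\mathcal N(n,\varepsilon)$ is the minimal number of sets of $d_n$-diameter at most $\varepsilon$ needed to cover $X$, and $d_n(x,y)=\max_{0\le k<n}d(f^kx,f^ky)$.

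\emph{Step 1 (choice of cover).} Fix $s>\dim_H(X)$ and $\varepsilon>0$. Then the $s$-dimensional Hausdorff measure of $X$ is zero. Hence $X$ can be covered by sets $U_i$ with $\diam U_i=r_i<\varepsilon/L$ and $\sum_i r_i^s<(\varepsilon/L)^{s}$. We may take the $U_i$ slightly enlarged to be open, which increases the sum only by an arbitrarily small amount. By compactness we keep finitely many of them, $U_1,\dots,U_m$. For each $i$, let $n_i\ge1$ be the largest integer with $r_iL^{n_i-1}\le\varepsilon$; then $n_i\ge1$ because $r_i<\varepsilon/L$. Since $f$ is $L$-Lipschitz, $\diam f^k(U_i)\le r_iL^k\le\varepsilon$ for $k<n_i$. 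Thus $U_i$ has $d_{n_i}$-diameter at most $\varepsilon$. Moreover, maximality of $n_i$ gives $L^{n_i}>\varepsilon/r_i$, so
$$
\sum_i L^{-sn_i}<\varepsilon^{-s}\sum_i r_i^s<L^{-s}\le 1 .
$$

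\emph{Step 2 (concatenation).} For a word $w=i_1\dots i_k$ put $|w|=n_{i_1}+\dots+n_{i_k}$ and
$$
U_w=U_{i_1}\cap f^{-n_{i_1}}U_{i_2}\cap\dots\cap f^{-(n_{i_1}+\dots+n_{i_{k-1}})}U_{i_k}.
$$
By Step 1, $U_w$ has $d_{|w|}$-diameter at most $\varepsilon$.

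Given $n$, every $x\in X$ lies in some $U_w$ with $n\le|w|<n+M$, where $M=\max_i n_i$. To see this, greedily choose the index of a set containing $x$, then one containing $f^{n_{i_1}}x$, and so on, stopping as soon as the accumulated length reaches $n$. Such a $U_w$ has $d_n$-diameter at most $\varepsilon$. Therefore
$$
\mathcal N(n,\varepsilon)\le\#\{w: n\le|w|<n+M\}.
$$

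\emph{Step 3 (counting).} This is the main point. Put $a_j=\#\{w:|w|=j\}$, which satisfy the renewal recursion $a_j=\sum_i a_{j-n_i}$ with $a_0=1$. I will show $a_j\le L^{sj}$ by induction:
$$
a_j\le \sum_i L^{s(j-n_i)}=L^{sj}\sum_iL^{-sn_i}\le L^{sj}.
$$
Consequently $\mathcal N(n,\varepsilon)\le M L^{s(n+M)}$, so $\limsup_n\frac1n\log\mathcal N(n,\varepsilon)\le s\log L$. Letting $\varepsilon\to0$ and then $s\downarrow\dim_H(X)$ yields $\htop(X,f)\le\dim_H(X)\log L$. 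If $\dim_H(X)=\infty$ there is nothing to prove.

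The only delicate point is Step 1: the diameter bound must be converted into a length $n_i$ so that the Hausdorff sum controls the weights $\sum_i L^{-sn_i}$ by $1$. This is exactly what makes the renewal count in Step 3 close, and it is why I require $\sum_i r_i^s<(\varepsilon/L)^s$ rather than merely a small sum.
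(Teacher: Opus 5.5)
Your proof is correct. The paper gives no argument for this statement; it is imported from Misiurewicz. The standard proof behind that citation goes through Bowen's dimension-type entropy. There one converts a Hausdorff cover of $X$ into a cover by sets of $d_{n_i}$-diameter at most $\varepsilon$, weighted by $e^{-\lambda n_i}$. One then invokes Bowen's theorem that on a compact space this Carath\'eodory-type entropy coincides with $\htop$.

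Your Step 1 is exactly that conversion at $\lambda=s\log L$. Your Steps 2--3 reprove by hand, via the renewal bound $a_j\le L^{sj}$, the only half of Bowen's theorem needed here: a finite cover with $\sum_i L^{-sn_i}\le 1$ forces $\mathcal N(n,\varepsilon)\le ML^{s(n+M)}$. So your route gives a self-contained argument, using compactness only for the finite subcover and for $M<\infty$. The route through Bowen's entropy gives more generality, since it bounds the entropy of arbitrary (non-compact, non-invariant) subsets $Y$ by $\dim_H(Y)\log L$.

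Two minor points. First, if $X$ has isolated points, an open $U_i$ may have diameter $0$, and then $n_i$ is undefined. Simply take $r_i$ to be any positive number $\ge\diam U_i$ while keeping the sum small. Second, your condition $\sum_i r_i^s<(\varepsilon/L)^s$ is stronger than necessary. The conditions $r_i\le\varepsilon$ and $\sum_i r_i^s\le\varepsilon^s$ already give $n_i\ge1$ and $\sum_i L^{-sn_i}<1$, by the same maximality estimate $L^{-sn_i}<(r_i/\varepsilon)^s$.
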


It is well-known that for the pair $(\G, d_{\G})$ we have $\dim_H(\G)=1$ (see e.g. \cite[Claim 3]{Entropy_paradox_dendryty}).

\begin{corollary}
    Let $(\G, f)$ be a TDS on the metric space $(\G, d_{\G})$. If $f\colon \G \to\G$ is $L$-Lipschitz, then $\htop(\G, f)\leq\log L$.
\end{corollary}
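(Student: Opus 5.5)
This is a direct specialization of Theorem~\ref{lipschitz_entropy_graf} to the metric space $(\G, d_{\G})$, with the dimension factor removed by the fact that $\dim_H(\G)=1$. We split into two cases according to the Lipschitz constant.

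\emph{Case $L>1$.} Theorem~\ref{lipschitz_entropy_graf} applies verbatim to the TDS $(\G,f)$ on $(\G,d_{\G})$ and gives
\begin{equation*}
\htop(\G,f)\le \dim_H(\G)\cdot\log L .
\end{equation*}
Substituting $\dim_H(\G)=1$ (see \cite[Claim 3]{Entropy_paradox_dendryty}) yields $\htop(\G,f)\le\log L$.

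\emph{Case $L\le 1$.} Since $L<L'$ for every $L'>1$, the map $f$ is also $L'$-Lipschitz for every such $L'$. The previous case therefore gives $\htop(\G,f)\le\log L'$ for all $L'>1$, and letting $L'\downarrow 1$ yields $\htop(\G,f)\le 0$, so $\htop(\G,f)=0$.

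The case $L<1$ needs one more remark, since then $\log L<0$ while entropy is nonnegative. A strict contraction $f$ is a continuous surjection only if $\G$ is a point, because the diameter satisfies $\diam f(\G)\le L\diam\G<\diam\G$. As $\G$ is non-degenerate, the case $L<1$ is vacuous and the inequality $\htop(\G,f)\le\log L$ holds trivially. The only case in which the statement has real content is $L\ge1$.

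The only non-routine input is the Hausdorff dimension computation $\dim_H(\G)=1$, which we take from the cited claim. If one wanted to verify it directly, it follows from two observations. First, $\G$ contains an arc, so $\dim_H(\G)\ge 1$. Second, $\G$ is covered by the levels $\Lev_n$ together with the Cantor set $\End(\G)$. Each level $\Lev_n$ is a union of $2^n$ segments of length $4^{-n}$, so the one-dimensional measure of $\G$ is bounded by $\sum_n 2^n4^{-n}<\infty$. For the endpoints, covering them by the $2^n$ subdendrites hanging below level $n$, each of diameter $O(4^{-n})$, shows that $\dim_H(\End(\G))\le \log 2/\log 4=1/2$. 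Together these give $\dim_H(\G)\le 1$.
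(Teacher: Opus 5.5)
Your proof is correct and follows the paper, which obtains the corollary directly from Theorem~\ref{lipschitz_entropy_graf} together with $\dim_H(\G)=1$. Your separate treatment of $L\le 1$ fills in a case the paper passes over silently, since the cited theorem is stated only for $L>1$: for $L<1$ the hypothesis is vacuous because a strict contraction cannot be surjective on the non-degenerate $\G$, and for $L=1$ you let $L'\downarrow 1$.
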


The next theorem is a special case of {\cite[Theorem C]{Positive_entropy_Gehman}}. We present it in a form that suits best our considerations.

\begin{theorem}\label{gehman_transitive_positive_entropy}
    A transitive map on the Gehman dendrite has positive topological entropy and dense set of periodic points.
\end{theorem}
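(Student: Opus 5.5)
The plan is to obtain the statement as a specialization of \cite[Theorem C]{Positive_entropy_Gehman} rather than to rebuild the argument. Theorem C there concerns transitive maps on dendrites whose structure near endpoints is controlled; in particular it covers dendrites with closed endpoint set, or with a dense union of free arcs. I would first check that $\G$ meets these hypotheses:
\begin{enumerate}
\item $\End(\G)$ is homeomorphic to the Cantor set, so it is closed.
\item $\G$ is not an arc, since $B(\G)\neq\emptyset$.
\item $\Reg(\G)=\G\setminus\End(\G)$ is the countable union of the open edges of the levels $\Lev_n$ together with the branch points. Hence $\Reg(\G)$ is an open dense set that is a union of free arcs.
\end{enumerate}
Once these are checked, both conclusions are read off directly from Theorem C.

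To make the sketch independent, I would argue directly in two steps.

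\textbf{Positive entropy.} Take an edge $E=[b,b']$ in some level $\Lev_n$. Transitivity gives a point with dense orbit, and such a point passes through $\Int(E)$ repeatedly. Since $f$ is transitive on a non-degenerate continuum that is not a circle, a standard argument shows $f$ is not a homeomorphism of finite order. A dendrite map with a dense orbit must then fold some arc.

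The aim is to find an arc $J\subset \Int(E)$, an integer $m\ge1$, and two disjoint subarcs $J_0,J_1\subset J$ with $f^m(J_i)\supseteq J$ for $i=0,1$. This is an arc horseshoe, and it yields $\htop(\G,f^m)\ge\log 2$ by the usual coding argument. Combined with Theorem~\ref{entropy_properties}(3), this gives $\htop(\G,f)\ge \tfrac{1}{m}\log 2>0$.

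To produce the horseshoe, I would use transitivity to find small open arcs $U\subset \Int(E)$ whose images $f^k(U)$ return to $E$ and cover a definite subarc. The unique arcwise connectedness of $\G$ forces $f^k(U)$ to contain the arc between any two of its points. Two returns with a common covered subarc then give the pair $J_0,J_1$.

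\textbf{Dense periodic points.} Let $V$ be open. Since $\Reg(\G)$ is dense, I may shrink $V$ to a small open arc inside a free edge. The same covering mechanism as above gives an arc $J\subset V$ and $m$ with $f^m(J)\supseteq J$. By the intermediate value property on the arc $J$, applied after composing with the retraction onto the arc containing $J$, this produces a fixed point of $f^m$ in $J\subset V$.

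The main obstacle is the covering step: showing that transitivity forces $f^m(J)\supseteq J$ for arcs $J$ inside an arbitrary edge, rather than merely $f^m(J)\cap J\neq\emptyset$. The difficulty is that images could escape through the countably many branch points, or accumulate at the Cantor set of endpoints.

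My first attempt would be the approach of \cite{Positive_entropy_Gehman}:
\begin{itemize}
\item use that $f(\G)=\G$ together with transitivity to rule out $f^k(U)$ always being "on one side" of $U$;
\item show that some iterate therefore contains an arc straddling $U$;
\item use local connectedness and the fact that every point of $\End(\G)$ has arbitrarily small neighbourhoods bounded by a single point, so that images cannot skip over $U$ without covering it.
\end{itemize}
Since this is the core of \cite[Theorem C]{Positive_entropy_Gehman}, in the paper I would simply invoke that result after checking the hypotheses listed above.
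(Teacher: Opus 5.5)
Your operative proof, which checks that $\G$ qualifies and then invokes \cite[Theorem C]{Positive_entropy_Gehman}, is exactly what the paper does: it states the result as a special case of that theorem and gives no further argument, not even a hypothesis check. Two minor remarks. First, that theorem concerns compact spaces containing a free interval that disconnects the space, not dendrites with closed endpoint sets, so the check that matters is your item~3 together with the fact that every non-endpoint of a dendrite is a cut point (the closedness of $\End(\G)$ plays no role). Second, your self-contained sketch would not stand on its own: ``not of finite order, hence it folds an arc'' is a non sequitur, and the covering step you isolate is the whole content of Theorem C, so relying on the citation is the right choice.
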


Another way of considering the notion of entropy is with respect to a measure. We will briefly present the necessary notions needed for our presentation.
\begin{definition}
    Let $(X, f)$ be a TDS and $\mu$ be a probability measure on $X$. We say that $\mu$ is \emph{$f$-invariant} (or generally invariant) if $\mu(f^{-1}(U))=\mu(U)$ for all measurable sets $U\subset X$. In this setting, we also say that $f$ is \emph{$\mu$-preserving} (or measure preserving).
\end{definition}

\begin{remark}
    If $(X, f)$ is a TDS, then we assume that $\mu$ is a measure defined on the Borel sigma algebra $\mathcal{B}(X)$. Sometimes the system $(X, f)$ may not have a topological structure. In that case we call $(X, B, f, \mu)$ a \emph{measurable dynamical system}, where $B$ is a sigma-algebra of subsets of $X$, $f\colon X\to X$ is a measurable map and $\mu$ is a finite $f$-invariant measure.
\end{remark}

\begin{definition}
    Let $(X, f)$ be a TDS. We say that an $f$-invariant measure $\mu$ is \emph{ergodic} if for each Borel set $A$ such that $f^{-1}(A)=A$ we have $\mu(A)\in\{0,1\}$.
\end{definition}

\begin{definition}\label{generic_assumptions}
    Let $(X, f)$ be a TDS and $\mu$ an ergodic measure for $f$. We say that a point $x\in X$ is \textit{generic} if we have (in the  weak-* topology:)
    \begin{equation*}
        \lim_{n\to\infty} \frac{1}{n}\sum_{k=0}^{n-1}\delta_{f^k(x)}=\mu.
    \end{equation*}
\end{definition}
\begin{remark}
    Generic points are recurrent points and have dense orbits in measure's support.
\end{remark}

We denote by $h_\mu(X, f)$ the measure-theoretic entropy of $(X,f)$ (or in general for a measurable system $(X, B, f, \mu))$ with respect to an invariant measure $\mu$. For definition and further details, see \cite[Chapter 4 \& 8]{Walters}.

\begin{remark}
    It is common to say that the measure $\mu$ has some (finite/infinite or zero/positive) entropy whenever it is clear what system we are referring to.
\end{remark}
\begin{remark}
    We denote by $\mathcal{M}_f(X)$ the set of $f$-invariant measures on $X$ and by $\mathcal{M}_f^+(X)$ the set of ergodic $f$-invariant measures on $X$ with positive entropy.
\end{remark}
\begin{definition}
    Let $(X, f), (Y, g)$ be two TDS and $\mu_X, \mu_Y$ be two ergodic measures on $X, Y$ respectively. We say that $f, g$ are \textit{measurably isomorphic} if there exist invariant subsets $X'\subset X, Y'\subset Y$ with $\mu_X(X')=1=\mu_Y(Y')$ and a measure preserving bijection $\varphi\colon X'\to Y'$ such that $\varphi\circ f=g\circ \varphi$ on $X'$.
\end{definition}

\begin{theorem}[{\cite[Theorem 4.11]{Walters}}]\label{entropy_iso}
    If $(X, f), (Y, g)$ are measurably isomorphic, then $h_{\mu_X}(X, f)=h_{\mu_Y}(Y, g)$.
\end{theorem}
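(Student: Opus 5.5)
The statement is the classical invariance of measure-theoretic entropy under measurable isomorphism (Walters, Theorem 4.11). I would prove it by transporting partitions, and hence the Kolmogorov--Sinai supremum, along the conjugating bijection $\varphi\colon X'\to Y'$.

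First I will restrict both systems to the full-measure invariant sets. Since $\mu_X(X')=1$ and $X'$ is invariant, the system $(X',\mathcal{B}(X)\cap X',f|_{X'},\mu_X|_{X'})$ has the same entropy as $(X,f,\mu_X)$. Indeed, every finite measurable partition $\xi$ of $X$ agrees with its trace $\xi\cap X'$ up to null sets. The refinements $\bigvee_{i<n}f^{-i}\xi$ therefore have identical static entropies $H(\cdot)$, because each atom changes only on a null set. The same holds on the $Y$ side.

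Second, I will use $\varphi$ to match partitions. I assume $\varphi$ is bimeasurable, as is implicit in the definition of an isomorphism; if only measurability of $\varphi$ is given, I would invoke the standard fact that a measurable bijection between standard Borel spaces has a measurable inverse. Given a finite partition $\eta$ of $Y'$, set $\xi=\varphi^{-1}\eta$. This is a finite measurable partition of $X'$, and $\mu_X(\varphi^{-1}B)=\mu_Y(B)$ for every atom $B$. From $\varphi\circ f=g\circ\varphi$ we get $f^{-i}\varphi^{-1}=\varphi^{-1}g^{-i}$. Hence
\begin{equation*}
\varphi^{-1}\Bigl(\bigvee_{i=0}^{n-1}g^{-i}\eta\Bigr)=\bigvee_{i=0}^{n-1}f^{-i}\xi,
\end{equation*}
and the atoms on the two sides have equal measures. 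So the static entropies agree for every $n$, and therefore $h_{\mu_X}(f,\xi)=h_{\mu_Y}(g,\eta)$.

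Third, I will take suprema. Every finite partition of $X'$ arises as $\varphi^{-1}\eta$ for $\eta=\varphi\xi$. The map $\eta\mapsto\varphi^{-1}\eta$ is therefore a bijection between the finite measurable partitions of $Y'$ and those of $X'$ that preserves the entropy $h(\cdot,\cdot)$. Taking the supremum over all finite partitions on both sides gives $h_{\mu_X}(X,f)=h_{\mu_Y}(Y,g)$.

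The only delicate point is the measurability of $\varphi^{-1}$ together with the restriction to invariant full-measure subsets. Once those two are in place, the computation is purely formal.
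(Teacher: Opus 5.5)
Your proposal is correct and is essentially the standard argument behind Walters' Theorem 4.11, which the paper cites without proof. You pull finite partitions back along $\varphi$, use $\varphi\circ f=g\circ\varphi$ to commute the pullback with the refinements $\bigvee_{i<n}f^{-i}$, and then take suprema; your bijection on partitions plays the role of Walters' symmetry argument, and both need $\varphi^{-1}$ measurable, which Walters builds into the definition and which holds in the paper's application, where $\varphi$ is a homeomorphism by Lemma~\ref{uep_homeo}.
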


\begin{theorem}[Variational Principle]\label{VP}
    Let $(X, f)$ be a TDS and $\mathcal{M}(X)$ be set of all invariant measures on $X$. Then
    \begin{equation*}
        \htop(X, f)=\sup\{h_\mu(X, f)\colon \mu\in \mathcal{M}_f(X)\}.
    \end{equation*}
    Moreover, the same assertion holds true if the supremum is taken over the set of ergodic measures on $(X, f)$.
\end{theorem}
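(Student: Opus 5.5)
The final statement is the Variational Principle, Theorem~\ref{VP}. I would follow Misiurewicz's standard proof, organised as the two inequalities
\[
\sup_{\mu}h_\mu(X,f)\le \htop(X,f)\quad\text{and}\quad \htop(X,f)\le\sup_{\mu}h_\mu(X,f),
\]
followed by a reduction to ergodic measures.

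\textbf{Upper bound $h_\mu\le\htop$.} Fix $\mu\in\mathcal{M}_f(X)$ and a finite Borel partition $\xi=\{A_1,\dots,A_k\}$.
\begin{enumerate}
\item By regularity of $\mu$, choose compact sets $B_i\subset A_i$ with $\mu(A_i\setminus B_i)$ small.
\item Put $B_0=X\setminus\bigcup_i B_i$ and $\eta=\{B_0,B_1,\dots,B_k\}$. Then $h_\mu(f,\xi)\le h_\mu(f,\eta)+H_\mu(\xi\mid\eta)$, and $H_\mu(\xi\mid\eta)\le 1$.
\item The sets $B_0\cup B_i$ are open and form a cover $\mathcal U$. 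Each element of $\mathcal U^n=\bigvee_{j<n}f^{-j}\mathcal U$ meets at most $2^n$ elements of $\eta^n$.
\item Hence $H_\mu(\eta^n)\le \log N(\mathcal U^n)+n\log 2$, so $h_\mu(f,\xi)\le \htop(f)+\log 2+1$.
\item Apply this to $f^m$ and use $h_\mu(f^m)=m\,h_\mu(f)$ together with Theorem~\ref{entropy_properties}(3). Dividing by $m$ removes the additive constant, giving $h_\mu(f)\le\htop(f)$.
\end{enumerate}

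\textbf{Lower bound.} This is the main step.
\begin{enumerate}
\item Fix $\varepsilon>0$. For each $n$ take a maximal $(n,\varepsilon)$-separated set $E_n$ with $\log|E_n|$ close to $\log s_n(\varepsilon)$.
\item Define $\sigma_n=\frac1{|E_n|}\sum_{x\in E_n}\delta_x$ and $\mu_n=\frac1n\sum_{k<n}f^k_*\sigma_n$.
\item Choose a subsequence $n_j$ realising $\limsup\frac1n\log s_n(\varepsilon)$ along which $\mu_{n_j}\to\mu$ weak-*. Then $\mu$ is $f$-invariant.
\item Choose a partition $\xi$ with $\operatorname{diam}<\varepsilon$ and $\mu(\partial A)=0$ for all $A\in\xi$; such a partition exists using balls whose boundary spheres are $\mu$-null.
\item Each element of $\xi^n$ contains at most one point of $E_n$, so $H_{\sigma_n}(\xi^n)=\log|E_n|$.
\item For fixed $q<n$, split $\{0,\dots,n-1\}$ into blocks of length $q$ with offsets $0,\dots,q-1$. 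Subadditivity and concavity of $H$ give
\[
\frac qn\log|E_n|\le H_{\mu_n}(\xi^q)+\frac{2q^2}{n}\log|\xi|.
\]
\item Since the boundaries of the elements of $\xi^q$ are $\mu$-null, $H_{\mu_{n_j}}(\xi^q)\to H_\mu(\xi^q)$. Let $n_j\to\infty$ and then $q\to\infty$.
\end{enumerate}
This yields $h_\mu(f)\ge\limsup\frac1n\log s_n(\varepsilon)$. Letting $\varepsilon\to0$ gives $\sup_\mu h_\mu\ge\htop$.

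\textbf{Ergodic version.} Use the ergodic decomposition $\mu=\int\mu_x\,d\mu(x)$ and the Jacobs formula $h_\mu=\int h_{\mu_x}\,d\mu(x)$. Affinity of entropy under this decomposition shows that some ergodic component $\mu_x$ has entropy at least $h_\mu$ minus an arbitrarily small amount, or infinite if $h_\mu=\infty$. So the supremum over ergodic measures equals the supremum over all invariant measures.

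I expect the main obstacle to be the block-decomposition estimate in the lower bound, specifically the careful bookkeeping of the boundary terms.
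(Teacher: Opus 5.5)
Your proposal is correct. It is the standard Misiurewicz proof of both inequalities plus the ergodic-decomposition (Jacobs) argument, i.e.\ exactly the classical result the paper quotes without proof from Walters' book (Chapter~8). One small imprecision: when $h_\mu=\infty$, Jacobs' formula only gives ergodic components of arbitrarily large (not necessarily infinite) entropy, but that still suffices for the ergodic version.
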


\begin{definition}
    Let $(X, f)$ be a TDS. We say that an ergodic measure $\mu$ is a \emph{measure of maximal entropy} (MME) if $\htop(X, f)=h_\mu(X, f)$.
\end{definition}

\subsection{Markov and Markovian maps}
We say that a continuous map $f\colon X\to Y$ between topological spaces is \emph{monotone} if for every $y\in Y$ the preimage $f^{-1}(y)$ is a connected subset of $X$.

A map $f\colon T\to T$ on a topological tree $T$ is \textit{$P$-monotone} if $P\subseteq T$ is a finite set  containing all branching points of $T$ such that for each connected component $C$ of $T\setminus P$ the map $f\colon \overline{C}\to T$ is monotone. We call $\overline{C}$ a \emph{$P$-basic interval} of $f$. Observe that each connected component $C$ of $T\setminus P$ must be an open subset of $T$, as every branching point of $T$ belongs to $P$. Consequently, every $P$-basic interval is a free arc. 

\begin{definition}\label{piecewise_linearity_def}
    Let $X, Y$ be dendrites and $I\subset X$ be a free arc. We say that a map $f\colon X\to Y$ is \emph{linear on $I$} if it is monotone and there exists a constant $s\geq 0$ such that $\diam(f(J))=s\cdot \diam(J)$ for every arc $J\subset I$. We call $I$ a \emph{linearity interval} for $f$ and $s_{f}(I)=s$ the \emph{slope} of $f$ on $I$. We say that $f$ is \emph{piecewise linear} (\emph{$\sigma$-linear}) if $X$ can be written as a finite union (contains a countable dense union) of free arcs such that $f$ is linear on each of those arcs. Moreover, we say that a $\sigma$-linear map $f$ has \emph{constant slope} if for each of those arcs the slope is the same.
\end{definition}

 \begin{remark}\label{rem:linearity}
    For simplicity in our considerations, we will assume that the linearity intervals are by default maximal in the sense of inclusion. In other words, we consider an interval $I$ to be a linearity interval for $f$ if it is a linearity interval and is not properly contained in any other linearity interval.
\end{remark}

\begin{remark}
    Let $f\colon X\to Y$ be a piecewise linear ($\sigma$-linear) map. We say that the slope $s_f$ of $f$ is bounded from below by some constant $c\in\mathbb{R}$, which we denote $s_f\geq c$, whenever $$\inf\{s_f(I)\colon\: I\textnormal{ - linearity interval for }f\}\geq c.$$
    Similarly, we say that the slope $s_f$ of $f$ is bounded from above by some constant $C\in\mathbb{R}$ if $\sup\{s_f(I)\colon\: I\textnormal{ - linearity interval for }f\}\leq C$ and we denote it $s_f\leq C$ when this occurs.
\end{remark}

\begin{definition}
    Let $[x, y], [x', y']\subset X$ be arcs in a dendrite $X$ and $M\in\mathbb{N}_1$.
    We say that a piecewise linear map $f\colon [x, y]\to [x', y']$ is an \emph{$M$-fold} if $[x, y]=[j_1, j_2]\cup\dots \cup[j_M, j_{M+1}]$ with $\diam([j_k, j_{k+1}])=\frac{\diam([x, y])}{M}$, $f$ is linear on $[j_k, j_{k+1}]$    with alternating endpoint image and the image $f(x)=x'$ and $f([j_k, j_{k+1}])=[x', y']$ for $k=1, \dots, M$.

    We will say that we set an $M$-fold between two arcs $[x, y], [x', y']$ by defining an $M$-fold map $f\colon[x, y]\to [x', y']$. Note that a $1$-fold is a homeomorphism induced by identity.
\end{definition}

\begin{definition}
We say that a $P$-monotone map $f\colon T\to T$ on a topological tree $T$ is \emph{$P$-linear} if it is linear on each $P$-basic interval. Moreover, if $f(P)\subset P$, then we say that $f$ is a \emph{$P$-Markov} map. We say that $(T, f)$ is $P$-Markov TDS if $f$ is a $P$-Markov map.
\end{definition}

We extend here the notion of $P$-Markov maps to the Gehman dendrite $\G$.

\begin{definition}
    Let $P\subset \G\setminus\End(\G)$ be a countable set.
    We say that a continuous map $f\colon \G\to \G$ is \emph{$P$-Markovian} if $B(\G)\subset P$ and
    \begin{enumerate}
        \item $f(P)\subset P$,
        \item $f$ is $\sigma$-linear with respect to $P$, i.e. the family $$\overline{\mathcal{C}}=\{\overline{C}\colon\; C\subset\G\setminus P,\; C\textnormal{ - connected component}\}$$ is a countable set such that $f$ is linear on each one of them and the union $$\bigcup_{C\in\:\overline{\mathcal{C}}} C=\G\setminus \End(\G),$$
        \item for each $b_1, b_2\in B(\G)$ the set $[b_1, b_2]\cap P$ is finite.
    \end{enumerate}
\end{definition}

\begin{remark}\label{Markov_notation_extended}
    Note that there might be multiple finite (countable) sets $P$ such that $f$ is $P$-Markov ($P$-Markovian). We say that $f$ is \emph{Markov} (\emph{Markovian}) if it is $P$-Markov ($P$-Markovian) for some finite (countable), yet not explicitly provided
 set $P$. In this case, we will refer to $P$-basic intervals by calling them basic intervals.

\end{remark}

We say that a topological dynamical system $(\G, f)$ is Markovian if the map $f\colon \G\to \G$ is Markovian. Let $(\G, f)$ be a Markov (Markovian) TDS. We will define $G_f=(V(G_f), E(G_f))$ to be a graph associated to $(\G, f)$, where each vertex $v_{[p, q]}\in V(G_f)$ is identified with a basic interval $[p, q]$ and the set $E(G_f)$ of directed edges is given by a \textit{covering relation} $$v_{[p_1, p_2]}\rightarrow v_{[q_1, q_2]} \iff [q_1, q_2]\subset f([p_1, p_2]).$$ We will call $G_f$ the \textit{Markov graph} (\textit{Markovian graph}) for $f$. We say that a directed graph $G_f$ is \emph{strongly connected} if there exists a path in $G_f$ between each pair of vertices from $V(G_f)$. It is well-known that $G_f$ is strongly connected if and only if $f$ is transitive.

\begin{lemma}\label{finite_in_out}
    Let $(\G, f)$ be a Markovian TDS and $G_f$ its Markovian graph. Then for each vertex $v\in V(G_f)$ we have that there are finitely many $w\in V(G_f)$ such that either $(v, w)\in E(G_f)$ or $(w, v)\in E(G_f)$.
\end{lemma}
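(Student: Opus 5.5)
The plan is to treat outgoing edges and incoming edges separately, using compactness and connectedness of images together with the local finiteness of $P$ on edges of $\G$ (condition (3) in the definition of $P$-Markovian maps).

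\textbf{Outgoing edges.} Fix a basic interval $[p,q]$. Since $f$ is linear on $[p,q]$, the image $f([p,q])$ is an arc (or a point) $[f(p),f(q)]$ with $f(p),f(q)\in P$. I will show that this arc meets only finitely many basic intervals in a nondegenerate set.

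\begin{itemize}
\item The arc $[f(p),f(q)]$ does not contain endpoints of $\G$ in its interior, and its own ends lie in $P\subset\G\setminus\End(\G)$. Hence it lies in a finite subtree of $\G$ and passes through only finitely many branch points.
\item Consequently it is covered by finitely many edges $[b_1,b_2]$ of $\G$. At its two extreme ends it may cover only part of an edge; there one uses the edge containing that end point, or the edge towards the root.
\item By condition (3), each such edge contains finitely many points of $P$, so it splits into finitely many basic intervals.
\end{itemize}

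Therefore only finitely many $[q_1,q_2]$ satisfy $[q_1,q_2]\subset f([p,q])$.

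\textbf{Incoming edges.} Fix $[q_1,q_2]$ and count the basic intervals $[p_1,p_2]$ with $[q_1,q_2]\subset f([p_1,p_2])$. The plan is to use uniform continuity of $f$ on the compact set $\G$.

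\begin{itemize}
\item Let $\delta=\diam([q_1,q_2])>0$. Choose $\eta>0$ such that $d_\G(x,y)<\eta$ implies $d_\G(f(x),f(y))<\delta$.
\item Any $[p_1,p_2]$ covering $[q_1,q_2]$ has $\diam f([p_1,p_2])\ge\delta$, so $\diam[p_1,p_2]\ge\eta$.
\item Only finitely many basic intervals have diameter at least $\eta$. Edges at level $n$ have length $4^{-n}$, so such intervals lie in $T_N$ with $4^{-N}<\eta$. The tree $T_N$ has finitely many edges, each containing finitely many points of $P$ by (3).
\end{itemize}

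Hence the in-degree is finite as well.

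\textbf{Main obstacle.} The delicate step is the outgoing case. One must make sure that $f([p,q])$ reaches only finitely many edges, i.e.\ that its endpoints lie in $\Reg(\G)$. This is exactly where $f(P)\subset P$ and $P\cap\End(\G)=\emptyset$ are used. It also needs linearity, in the sense of monotonicity, on each $\overline{C}$. Here $\overline{C}$ is the closure of a component of $\G\setminus P$, and by (2) and (3) it is a free arc $[p,q]$ with $p,q\in P$ rather than something accumulating on endpoints. I would verify that last point first.
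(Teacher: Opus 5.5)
Your proof is correct and follows essentially the same route as the paper. For outgoing edges, $f(p),f(q)\in P\subset\Reg(\G)$ forces the image arc into some $T_N$, which contains only finitely many basic intervals. For incoming edges, uniform continuity of $f$ combined with the fact that only finitely many basic intervals have diameter bounded below gives finiteness. The paper phrases both halves as contradictions rather than directly. Infinitely many basic intervals inside an arc would push $f(p)$ or $f(q)$ into $\End(\G)$. Infinitely many preimage intervals would have diameters tending to $0$ while their images keep diameter $d(p,q)$. The ingredients are identical.
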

\begin{proof}
    Let $[p, q]$ be a basic interval for $f$ and $v$ be the vertex in $G_f$ corresponding to $[p, q]$.  Observe that $V(G_f)$ is countable. Indeed, the dendrite $\G$ is composed of countably many edges $[b_\omega, b_\omega']$ and each such edge contains a finite number of basic intervals. Assume first that there exist countably infinitely many $w_1, w_2,\dots\in V(G_f)$ such that $(v, w_n)\in E(G_f)$. Then the image $f([p, q])=\bigcup_{n=1}^{\infty}[p_n, q_n]$, where $[p_i, q_i]\neq[p_j, q_j]$, each $[p_n, q_n]$ is a basic interval and $\bigcup_{n=1}^{\infty}[p_n, q_n]$ is necessarily an arc, since $f$ is linear on $[p, q]$. It follows that $f(p)\in \End(\G)$ or $f(q)\in \End(\G)$.  Indeed, $f([p, q])$ is a linear stretch over $\G$ and contains infinitely many basic intervals, while each level in $\G$ contains only a finite number of basic intervals.
    Since for any set $P$ we have that $P\subset \G\setminus \End(\G)$ and $f(P)\subset P$, this is a contradiction.

    Assume that there are countably infinitely many $w_1,w_2,\dots \in V(G_f)$ such that $(w_n, v)\in E(G_f)$. Observe that if $[p_n, q_n]$ are basic intervals corresponding to $w_n$, then $\diam([p_n, q_n])\to 0$, as we have that $B(\G)\subset P$ and the diameters of free arcs forming level $\mathcal{L}_n$ in $\G$ decrease to zero with $n\to\infty$ and each level contains only finitely many basic intervals. Thus, there are $x_n,y_n\in[p_n,q_n]$ with $\{f(x_n),f(y_n)\}=\{p,q\}$, which implies that $d(x_n,y_n)\to0$ while $d(f(x_n),f(y_n))=d(p,q)>0$.
\end{proof}

\begin{remark}
    While the notion of ``Markovian'' systems can be regarded as closely resembling the ``countable Markov'' systems found in e.g. \cite{Bobok2016, Bobok2019, Buzzi2006, Ruette_natural_extension}, there are crucial structural differences that warrant a distinct terminology. In our setting, the set $\End(\G)$ is strictly disjoint from the set $P$, and the latter one is locally finite - properties clearly violated e.g. by the construction found in \cite{Ruette_natural_extension}. A key consequence we have just shown is that the Markovian graph is then locally finite as well. Consequently, these maps behave much like finite Markov maps adapted to countable structures, such as the Gehman dendrite.
\end{remark}

\begin{corollary}
    Let $(\G, f)$ be a Markovian TDS. Then $f$ is not exact, and we have that $(\End(\G), f|_{\End(\G)})$ is a subsystem.
\end{corollary}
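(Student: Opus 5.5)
The statement has two parts: non-exactness, and invariance of $\End(\G)$. I would prove invariance first and derive non-exactness from it, relying on the same mechanism as in the proof of Lemma~\ref{finite_in_out}: $P$ is forward invariant and disjoint from $\End(\G)$, and every point of $\Reg(\G)=\G\setminus\End(\G)$ lies in a basic interval, since the basic intervals cover $\Reg(\G)$.

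\emph{Step 1: $f(\Reg(\G))\subset\Reg(\G)$.} Take $x\in\Reg(\G)$ and a basic interval $[p,q]\ni x$ with $p,q\in P$. Since $f$ is linear on $[p,q]$, the image $f([p,q])$ is the arc $[f(p),f(q)]$ (or a point if the slope is $0$). By $f(P)\subset P\subset\Reg(\G)$, both ends of this arc lie in $\Reg(\G)$. An endpoint of $\G$ cannot lie in the interior of an arc, because removing it would disconnect that arc. Hence $f(x)\notin\End(\G)$.

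\emph{Step 2: $f(\End(\G))\subset\End(\G)$.} Let $e\in\End(\G)$ and suppose $f(e)=y\in\Reg(\G)$. Take regular points $x_n\to e$; they lie on basic intervals $[p_n,q_n]$ of deeper and deeper levels. The points $f(x_n)$ lie on arcs $[f(p_n),f(q_n)]$ converging to $y$.

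I expect this step to be the main obstacle. I would use local finiteness from Lemma~\ref{finite_in_out}: the vertex $v$ corresponding to a basic interval containing (or adjacent to) $y$ can be hit by only finitely many vertices $w$. Continuity forces $f([p_n,q_n])$ to be small near $y$. If $y$ lies in the interior of a basic interval $J$, then $f([p_n,q_n])\subset J$ and has both ends in $P$, which is impossible for a nondegenerate image. So the image is degenerate, a point of $P$, and $f(x_n)\in P$ for the tail; hence $y\in P$. In the case $y\in P$, small arcs near $y$ with ends in $P$ meet only the finitely many basic intervals having $y$ as an endpoint. A nondegenerate such image would cover one of these intervals, contradicting the smallness of $f([p_n,q_n])$. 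So $f$ is eventually constant $=y$ on basic intervals accumulating at $e$.

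To conclude, I would take a subtree neighbourhood of $e$, which is a copy of $\G$ with its whole Cantor set of ends. On its dense regular part $f$ collapses to points of $P$, and $P$ is locally finite near $y$, so $f$ is constant on that neighbourhood by connectedness. I would then argue this contradicts the standing surjectivity, or accept that the intended conclusion is $f(\End(\G))\subset\End(\G)$ whenever $f$ is not locally constant near endpoints. If this fails, I would fall back on assuming nondegeneracy (for example transitivity), which rules out constancy on open sets.

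\emph{Step 3: surjectivity on $\End(\G)$.} Since $f$ is onto $\G$, Step 1 gives that every $e\in\End(\G)$ has all its preimages in $\End(\G)$. Therefore $f(\End(\G))=\End(\G)$, and $f^{-1}(\End(\G))=\End(\G)$. Since $\End(\G)$ is closed (Cantor), it is a subsystem.

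\emph{Step 4: non-exactness.} Take any open $U\subset\Reg(\G)$, for example the interior of an edge. By Step 1, $f^n(U)\subset\Reg(\G)\neq\G$ for every $n$, so $f$ is not exact.
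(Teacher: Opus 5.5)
Your Steps~1, 3 and~4 are correct. Step~1 is a more direct version of what the paper extracts from Lemma~\ref{finite_in_out}: a basic interval $[p,q]$ is mapped onto the arc $[f(p),f(q)]$ (or onto a point of $P$), whose ends lie in $P\subset\Reg(\G)$. So no regular point is sent to an endpoint, and non-exactness follows. Your local analysis in Step~2 is also right: if $f(e)=y\in\Reg(\G)$, then $y\in P$ and $f\equiv y$ on a neighbourhood of $e$.

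\textbf{The gap is the final move of Step~2, and it cannot be closed.} Local constancy does \emph{not} contradict surjectivity, because Definition~\ref{piecewise_linearity_def} allows slope $s=0$. For example, let $\mathcal{G}_i$ be the copy of $\G$ hanging below $b_i$ ($i=0,1$). Let $f$ map $\mathcal{G}_1$ onto $\G$ by $b_{1\omega}\mapsto b_\omega$, linear with slope $4$ on each edge and extended continuously to the endpoints. Let $f\equiv b_\lambda$ on $[b_\lambda,b_0]\cup[b_\lambda,b_1]\cup\mathcal{G}_0$. With $P=B(\G)\cup\{b_\lambda\}$ this is a continuous surjective $P$-Markovian map. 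Yet $f(e)=b_\lambda$ for every $e\in\End(\G)\cap\mathcal{G}_0$, so $\End(\G)$ is not invariant. The second assertion of the statement is therefore false for Markovian maps in general.

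So the second assertion needs an extra hypothesis, for instance that $f$ has positive slope on every basic interval (equivalently, $f$ is constant on no open set). The paper's proof assumes this tacitly: it asserts that $f(J)$ is not degenerate, and its second step uses Lemma~\ref{finite_in_out} to show that small neighbourhoods of an endpoint are mapped off $\bigcup_{k\le n}\mathcal{L}_k$ for every $n$. That step only works when basic intervals near the endpoint have nondegenerate images.

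The hypothesis holds for every map to which the corollary is later applied. A transitive Markovian map cannot be constant on an open set $U$: otherwise $f^n(U)\subset P$ for all $n\ge 1$, and so $f^n(U)$ misses any open subset of the interior of a basic interval disjoint from $U$.

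Under this hypothesis your Step~2 is complete as written, since local constancy near $e$ is itself the contradiction. Step~3 then gives $f^{-1}(\End(\G))=\End(\G)$. Your fallback was the right fix, but it has to be stated as an added hypothesis rather than derived from surjectivity.
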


\begin{proof}
Let $J$ be an arc between any two regular points. Then $f(J)$ is not degenerate, and thus cannot contain an endpoint by Lemma~\ref{finite_in_out}.
Similarly, by Lemma~\ref{finite_in_out}, for every point $z\in \End(\G)$ and every $n$ there is a neighborhood $U$ of $z$ such that $f(U)\cap \cup_{k\leq n}\mathcal{L}_k=\emptyset$,
so $f(\End(\G))\subset \End(\G)$.
\end{proof}

The following assertions will be crucial in our reasoning. We present them in a form that best suits our considerations.

\begin{theorem}[{\cite[Corollary 1.11]{Baldwin}, \cite[Lemma 8.1]{Entropy_paradox_drzewa}}]\label{connected_transitive}
    Let $f\colon T\to T$ be a Markov map on a topological tree $T$. Then $f$ is transitive if and only if the  Markov graph $G_f$ is strongly connected and is not a cycle.
\end{theorem}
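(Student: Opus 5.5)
We prove the two implications separately, working with the covering relation of the Markov graph $G_f$. Throughout we use that a $P$-linear map on a basic interval $I$ is monotone, and that $f(P)\subset P$. Together these give that $f(I)$ is an exact union of basic intervals, namely the ones $J$ with $I\to J$. Iterating, $f^n(I)$ is the union of the basic intervals reachable from $I$ by paths of length exactly $n$ in $G_f$.

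\emph{Necessity.} Assume $f$ is transitive and let $I,J$ be basic intervals.
\begin{enumerate}
\item Apply transitivity to the open sets $\Int(I)$ and $\Int(J)$. This gives $n$ with $f^n(\Int I)\cap\Int J\neq\emptyset$.
\item Since $f^n(I)$ is a union of basic intervals and meets $\Int J$, it contains $J$. So there is a path $I\to\cdots\to J$, and $G_f$ is strongly connected.
\item Suppose $G_f$ were a cycle $I_0\to I_1\to\cdots\to I_{k-1}\to I_0$ in which each vertex has exactly one successor. Then each $f|_{I_j}$ is a linear homeomorphism onto $I_{j+1}$.
\item Hence $f^k$ is a homeomorphism of each $I_j$ onto itself, so $f^{2k}$ is monotone increasing on $I_j$.
\item A transitive system on a non-degenerate tree has a dense orbit, and so does a suitable power restricted to a component. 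This is incompatible with an increasing self-homeomorphism of an arc, whose orbits converge to fixed points. Equivalently, the entropy would vanish, while transitive tree maps have positive entropy.
\end{enumerate}
So transitivity forces $G_f$ to be strongly connected and not a cycle.

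\emph{Sufficiency, setup.} Assume $G_f$ is strongly connected and not a cycle. Strong connectivity plus "not a cycle" means some vertex has at least two outgoing edges. Consequently, following paths, every vertex has exponentially many paths of length $n$; that is, the spectral radius of the transition matrix exceeds $1$. Since each basic interval maps linearly onto an arc consisting of $m\ge1$ basic intervals, the key step is an expansion argument. Given an open set $U$, take an arc $K\subset U$ inside some basic interval. We want some iterate of $K$ to contain a whole basic interval. Once that holds, strong connectivity gives that further iterates cover every basic interval, and so $f^N(U)$ meets every open set.

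\emph{Sufficiency, the main obstacle.} Linearity gives only a piecewise constant slope, and slopes may be less than $1$ on some intervals. So we cannot simply say lengths grow.

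\begin{enumerate}
\item First, take the Perron--Frobenius left eigenvector $\ell=(\ell_I)$ of the irreducible transition matrix, with eigenvalue $\lambda>1$.
\item Define a new metric in which basic interval $I$ has length $\ell_I$. Replace $f$ by the conjugate map that is linear in this metric. This is the standard constant-slope model of Parry/Milnor--Thurston type; its slope is $\lambda$, and it is semiconjugate to $f$.
\item Alternatively, argue directly. Suppose $K$ never covers a basic interval. Then every $f^n(K)$ lies inside the union of at most two adjacent basic intervals; it lies in one unless it straddles a point of $P$.
\item By monotonicity on basic intervals, the itinerary of $K$ is then essentially a single path.
\item Since $G_f$ is not a cycle, the itineraries of the endpoints of $K$ would have to coincide along a path that branches infinitely often.
\item This produces, via the standard interval argument (no wandering intervals for Markov maps with expanding cover), a contradiction with the counting of preimages: $\lambda^n$ branches must separate distinct points.
\end{enumerate}

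I expect this homterval-type exclusion to be the hardest step. I would handle it by citing the equivalence between Markov maps and their constant slope models, which is exactly how \cite{Baldwin} proceeds.
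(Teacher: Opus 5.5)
Your necessity half is fine. The gap is in sufficiency, exactly at the step you flag, and neither of your two routes closes it.

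\textbf{Why the constant-slope route fails.} The map $g$ that is linear in the Perron--Frobenius metric is in general only a \emph{factor} of $f$, via a monotone semiconjugacy that collapses the homtervals of $f$. Transitivity passes from $f$ down to $g$, not back up to $f$. Promoting this semiconjugacy to a conjugacy amounts to showing that $f$ has no homtervals, which is the very step in question. So citing it is circular.

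\textbf{Why the direct route fails.} Its last step does not work: knowing that there are roughly $\lambda^n$ cylinders of length $n$ says nothing about how long each cylinder is. In fact sufficiency is false for maps that are merely $P$-monotone. Take $[0,1]$ with $P=\{0,\tfrac12,1\}$ and a tent-type map with $f(0)=f(1)=0$ and $f(\tfrac12)=1$ that is constant equal to $\tfrac12$ on $[\tfrac18,\tfrac38]$. It satisfies $f(P)\subset P$, and its graph is the complete graph on two vertices with loops. Yet $(\tfrac18,\tfrac38)$ is sent into the finite orbit $\{\tfrac12,1,0\}$ and never meets $(0.6,0.7)$. So linearity of $f$ on basic intervals must enter quantitatively, and in your argument it never does.

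\textbf{The missing estimate.} It is a telescoping bound. Let $m$ and $M$ be the minimal and maximal lengths of basic intervals. For an edge $I\to J$ we have $s_I=\diam f(I)/\diam I$, with $\diam f(I)=\sum_{I\to J'}\diam J'\ge\diam J$. Moreover $\diam f(I)\ge \diam J+m$ whenever $I$ has out-degree at least $2$. Hence along any path $a_0\to a_1\to\dots\to a_n$,
\[
\prod_{i<n}s_{a_i}=\frac{\diam I_{a_n}}{\diam I_{a_0}}\prod_{i<n}\frac{\diam f(I_{a_i})}{\diam I_{a_{i+1}}}\ge\frac{m}{M}\Bigl(1+\frac{m}{M}\Bigr)^{k_n},
\]
where $k_n$ is the number of $i<n$ for which $a_i$ has out-degree at least $2$.

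\textbf{Bounding $k_n$.} Let $N=\#V(G_f)$. Since $G_f$ is strongly connected and not a cycle, among any $N+1$ consecutive vertices of a path at least one has out-degree at least $2$. Otherwise a repeated vertex produces a cycle of out-degree-one vertices. Such a cycle is closed under successors, so by strong connectivity it is all of $G_f$, a contradiction. Thus $k_n\ge\lfloor n/(N+1)\rfloor$.

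\textbf{Finishing.} The cylinder $\{x\in I_{a_0}\colon f^i(x)\in I_{a_i},\ i\le n\}$ is an arc that $f^n$ maps linearly onto $I_{a_n}$ with slope $\prod_{i<n}s_{a_i}$. So its length is at most $(M^2/m)(1+m/M)^{-\lfloor n/(N+1)\rfloor}$, uniformly in the path. For large $n$, every open arc $K$ inside a basic interval therefore contains a whole $n$-cylinder, so $f^n(K)$ contains a basic interval. Your strong-connectivity step then finishes the proof. Neither the constant-slope model nor any appeal to ``no wandering intervals'' is needed. Note that the paper gives no argument for this statement; it quotes it from \cite{Baldwin} and \cite{Entropy_paradox_drzewa}.
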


\begin{theorem}[{\cite[Theorem 2.2]{Alseda}, \cite[Theorem 2.1]{Entropy_paradox_drzewa}}]\label{totally_transitive_subtrees_Markov}
    Let $f\colon T\to T$ be a transitive Markov map on a topological tree $T$. Then either $f$ is totally transitive or there exists $(T_k)_{k=1}^n$ - a finite family of nondegenerate subtrees such that
    \begin{enumerate}
        \item $T=\bigcup_{k=1}^n T_k$,
        \item $T_{k_1} \cap T_{k_2}=\End(T_{k_1})\cap \End(T_{k_2})$ for $k_1\neq k_2$,
        \item $f(T_k)=T_{k+1}$ with $f^{n}|_{T_k}$ - transitive.
    \end{enumerate}
\end{theorem}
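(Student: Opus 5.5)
The plan is to work entirely with the Markov graph $G_f$ and derive the dichotomy from the period of this graph. By Theorem~\ref{connected_transitive}, $G_f$ is strongly connected and is not a cycle. Let $d$ be the \emph{period} of $G_f$, i.e.\ the gcd of the lengths of its cycles.

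\textbf{Case $d=1$.} Then $G_f$ is an irreducible aperiodic finite graph. Hence there is $N$ such that for every $m\ge N$ and every pair of vertices $v,w$ there is a path of length $m$ from $v$ to $w$. By the Markov property, $f^m(I)$ is the union of the basic intervals $J$ reached from $I$ by paths of length $m$. So $f^m(I)=T$ for every basic interval $I$ and every $m\ge N$.

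Every nonempty open set $U$ in $T$ contains a small arc. Since $f$ is $P$-linear with slopes bounded away from $0$ on nondegenerate pieces (transitivity excludes constant pieces), I would use the standard argument that some iterate of this arc contains a whole basic interval. One way is to note that lengths grow until the arc must cover a point of $P$ in its interior, and then a further iterate contains a basic interval. This gives exactness, hence mixing, hence total transitivity.

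\textbf{Case $d>1$.} Then the vertex set splits into cyclic classes $V_1,\dots,V_d$ with every edge going from $V_k$ to $V_{k+1}$ (indices mod $d$). Put $S_k=\bigcup_{v\in V_k} I_v$. The Markov property gives $f(S_k)=S_{k+1}$. Moreover, the graph of $f^d$ restricted to $V_k$ is strongly connected, and since $d>1$ is the period it is not a cycle. So $f^d|_{S_k}$ is transitive, by the same Markov-graph argument applied on each connected piece.

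The sets $S_k$ need not be connected, so I would pass to connected components. The collection $\mathcal{C}$ of all connected components of all $S_k$ is finite. Each component is a union of basic intervals glued at points of $P$, and $f$ maps it onto a connected subset of some $S_{k+1}$. Therefore $f$ induces a self-map of $\mathcal{C}$. Surjectivity of $f$ together with transitivity forces this induced map to be a single cyclic permutation of all of $\mathcal{C}$: an invariant proper subfamily would give a proper closed set with nonempty interior that is invariant. Relabeling these components as $T_1,\dots,T_n$ gives (1) and (3), with $n\ge d>1$ and $f^n|_{T_k}$ transitive.

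\textbf{Main obstacle: property (2).} Distinct $T_{k_1},T_{k_2}$ are unions of basic intervals with disjoint interiors, so they meet in finitely many points of $P$. What must be shown is that each such point $p$ is an endpoint of both subtrees. Suppose instead that $p$ is not an endpoint of $T_{k_1}$, so $T_{k_1}$ has at least two branches at $p$ while $T_{k_2}$ also contains a germ at $p$. I would then argue that $p$ is periodic, since $f$ permutes the finitely many intersection points because it permutes the pieces. Then follow the germs at $p$ under $f^n$. Transitivity of $f^n|_{T_{k_1}}$, together with the fact that $f^n(T_{k_2})=T_{k_2}$, should force a germ of $T_{k_1}$ at $p$ and a germ of $T_{k_2}$ at $p$ to be mapped into one another, contradicting the disjointness of the cyclic classes. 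This local germ-permutation argument at the shared point is where I expect the real care to be needed.

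Finally, if $n=1$ the construction would give a single piece. But this cannot occur when $d>1$, so the two cases give exactly the stated dichotomy.
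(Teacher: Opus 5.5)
The paper does not prove this statement; it imports it from the cited sources. Your argument breaks exactly at the step you flagged, and the problem is not technical: property (2) cannot be proved, because it is false once $n\ge 2$ is required. (If $n=1$ is allowed, as the statement literally permits, it holds trivially with $T_1=T$.)

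Here is a counterexample. Take the triod $T$ with centre $c$ and arms $A=[c,a]$, $B=[c,b]$, $C=[c,e]$, and pick $m,z$ so that $c,m,z,e$ appear in this order on $C$. Let $f$ fix $c$ and map $A$ and $B$ linearly onto $C$, $[c,m]$ onto $A$, $[m,z]$ onto $A$, and $[z,e]$ onto $B$, with $m\mapsto a$, $z\mapsto c$, $e\mapsto b$.
\begin{itemize}
\item The set $P=\{a,b,c,e,m,z\}$ is invariant.
\item In $G_f$, both $A$ and $B$ point to each of $[c,m],[m,z],[z,e]$; the arcs $[c,m]$ and $[m,z]$ point to $A$; and $[z,e]$ points to $B$.
\item This graph is strongly connected, not a cycle, and bipartite. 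Hence $f$ is transitive by Theorem~\ref{connected_transitive}, $d=2$, and $f^2(A\cup B)=A\cup B$.
\end{itemize}
Your construction gives $S_1=A\cup B$ and $S_2=C$. They meet at $c$, which is an interior point of the arc $A\cup B$, so (2) fails for your decomposition. Under $f^2$ the germs at $c$ go $A\mapsto A$, $B\mapsto A$, $C\mapsto C$. No germ ever crosses between the pieces, so the germ-permutation argument has nothing to contradict.

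Moreover, no decomposition with $n\ge 2$ satisfies (2). The point $c$ is fixed and lies in some $T_k$, hence in all of them. Finiteness of pairwise intersections forces each arm to lie in a single $T_k$. Then $c\in\End(T_k)$ forces each $T_k$ to be a single arm, which is impossible since $f(C)=A\cup B$.

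What your argument can deliver is the splitting with (2) weakened to ``distinct $T_k$ have finite intersection'' (equivalently, pairwise disjoint interiors). That is the form actually invoked in Claim~\ref{g_exact_markov}. Even for this version, two repairs are needed.
\begin{itemize}
\item ``Lengths grow'' is not available, since the slopes of a Markov map may be $\le 1$. Instead, show that the cylinders $\bigcap_{j\le k} f^{-j+1}(I_j)$ shrink to points. An itinerary with $f(I_j)=I_{j+1}$ for all large $j$ would end in a cycle of out-degree-one vertices. By strong connectivity that cycle would be all of $G_f$, which is excluded.
\item Theorem~\ref{connected_transitive} cannot be applied to $f^d$, which is not monotone on basic intervals. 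Use instead that the $d$-step graph on each cyclic class $V_k$ is primitive, so $f^{Nd}(I_v)=S_k$ for every $v\in V_k$ and all large $N$. Together with the shrinking cylinders this gives exactness of $f^d|_{S_k}$.
\end{itemize}
The second repair also shows that each $S_k$ is connected, being the image of an arc. So your passage to connected components is unnecessary, and $n=d$.
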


\begin{theorem}[{\cite[Corollary 7.6]{Entropy_paradox_drzewa}}]\label{totally_transitive_exact}
    Let $f\colon T\to T$ be a totally transitive Markov map on a topological tree $T$. Then $f$ is exact.
\end{theorem}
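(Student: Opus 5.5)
The plan is to reduce exactness to a combinatorial statement about the Markov graph $G_f$, and then pass from basic intervals to arbitrary open sets. Let $A$ be the $0$--$1$ transition matrix of $G_f$, indexed by the finitely many $P$-basic intervals. Since $f$ is transitive, Theorem~\ref{connected_transitive} gives that $G_f$ is strongly connected and not a cycle, so $A$ is irreducible.

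\textbf{Step 1: $A$ is primitive.} I first show that $A$ is aperiodic. Suppose $A$ has period $d>1$. Then the basic intervals split into $d$ cyclic classes $\mathcal{V}_0,\dots,\mathcal{V}_{d-1}$, and every edge of $G_f$ goes from $\mathcal{V}_i$ to $\mathcal{V}_{i+1 \bmod d}$. Let $T_i$ be the union of the basic intervals in $\mathcal{V}_i$. Then $f(T_i)\subseteq T_{i+1}$, so $f^d(T_i)\subseteq T_i$. The $T_i$ cover $T$ and have pairwise disjoint interiors, so $f^d$ is not transitive, contradicting total transitivity. An irreducible aperiodic $0$--$1$ matrix is primitive, so there is $N$ with $A^N>0$. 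Because $f$ is monotone on each basic interval and $f(P)\subseteq P$, the image $f^N(I)$ of any basic interval $I$ is a union of basic intervals, namely those reachable by paths of length $N$. Hence $f^N(I)=T$ for every basic interval $I$.

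\textbf{Step 2: every open set eventually covers a basic interval.} It now suffices to show that for every nondegenerate arc $J\subseteq T$ some iterate $f^m(J)$ contains a whole basic interval; then $f^{m+N}(J)=T$.

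First I show that some iterate $f^n(J)$ contains a point of $P$ in its interior. If not, each $f^n(J)$ is an arc inside the interior of a single basic interval, and $f^n|_J$ is a homeomorphism onto its image. By transitivity there are $n<k$ with $f^n(J)\cap f^k(J)\neq\emptyset$. Then $J'=\bigcup_{j\ge0}f^{n+j(k-n)}(J)$ is a connected set with $f^{k-n}(J')\subseteq J'$, contained in the interior of one basic interval. Its orbit $\bigcup_{i<k-n}f^i(\overline{J'})$ is then a closed invariant set with nonempty interior that misses $P$. Transitivity forces this set to be all of $T$, which is impossible because it misses $P$.

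So, replacing $J$ by an iterate, we may assume $J\supseteq[p,p+\varepsilon]$, a germ at some $p\in P$ inside a basic interval. Since $P$ is finite and $f(P)\subseteq P$, after further iteration we may take $p$ periodic, say $f^q(p)=p$. Linearity and monotonicity on basic intervals mean that each germ at a point of the orbit of $p$ is mapped onto a germ at the next point of the orbit, or collapsed. Collapse is excluded, because transitivity forces nonzero slopes. Define $S$ as the union, over all $n$, of the maximal initial segments of $f^n([p,p+\varepsilon])$ that lie in the basic intervals adjacent to the orbit of $p$. This is a finite union of arcs attached at orbit points, and $\overline{S}$ is $f$-invariant. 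If none of these arcs ever reaches the far endpoint of its basic interval, then $\overline{S}$ is a closed invariant set with nonempty interior that is a proper subset of $T$, contradicting transitivity. Hence some iterate of $J$ contains a full basic interval, as required.

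\textbf{Main obstacle.} I expect the hardest part to be Step 2, and specifically the germ bookkeeping at periodic points of $P$. One has to verify that the set $\overline{S}$ built from the germs is genuinely forward invariant, and that the monotone pieces cannot fold back in a way that keeps every image strictly inside a basic interval. Step 1 is standard Perron--Frobenius and cyclic-class reasoning.
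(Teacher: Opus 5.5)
Your Step 1 is sound. Strong connectivity of $G_f$ (Theorem~\ref{connected_transitive}) forces positive slopes, so the image of each basic interval is an arc with endpoints in $P$ and hence a union of basic intervals. The cyclic-class argument shows that a period $d>1$ would make $f^d$ non-transitive, and primitivity then gives $f^N(I)=T$ for every basic interval $I$. The paper itself gives no proof of this statement; it is imported from \cite{Entropy_paradox_drzewa}.

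The gap is in Step 2, at the point you flag as the main obstacle. Both contradictions pass to a closure and then assert a property the closure need not have.

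In the first part, $J'$ lies in an open basic interval, but $\overline{J'}$ can contain an endpoint of that basic interval, for instance when $f^{k-n}$ pulls $J'$ towards a fixed point lying in $P$. So the orbit of $\overline{J'}$ need not miss $P$, and the contradiction you draw from $\bigcup_{i<k-n}f^i(\overline{J'})=T$ is not available. (You also need $f^n(\Int J)\cap f^k(\Int J)\neq\emptyset$, not merely $f^n(J)\cap f^k(J)\neq\emptyset$, to know that $J'$ sits in a single basic interval.)

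In the second part, ``no arc reaches the far endpoint'' does not make $\overline{S}$ a proper subset of $T$. The segments in a fixed basic interval could increase to its far endpoint without attaining it. Segments growing from the two ends of one basic interval could also overlap and cover it. Neither argument uses linearity beyond injectivity on basic intervals, and some quantitative input is needed here.

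The missing idea is an affine dichotomy, combined with the freedom to shrink $J$.

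\emph{First part.} If no iterate of $J$ has a point of $P$ in its interior, then $f^m|_J$ is affine with factor $\Lambda_m$, the product of slopes along the itinerary. Once $f^n(\Int J)\cap f^k(\Int J)\neq\emptyset$, the itinerary is $(k-n)$-periodic from time $n$ on, so $g=f^{k-n}$ is affine on $J'$ with a single factor $\lambda$. Since $g(J')\subseteq J'$ and $J'$ is a bounded arc, $\lambda\le 1$. If $\lambda<1$, the images accumulate only on the periodic orbit of the fixed point of $g$. If $\lambda=1$, then $g^2=\mathrm{id}$ on $J'$. In both cases, for every subarc $J_0\subseteq J$ the closure of $\bigcup_{m\ge 0}f^m(J_0)$ has total length at most $C\,\diam(J_0)$, with $C$ independent of $J_0$. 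For short $J_0$ this closure is a proper closed forward-invariant set with nonempty interior, contradicting transitivity.

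\emph{Germ part.} After passing to an image of the germ, choose $Q$ so that $F=f^Q$ fixes $p$ and returns the germ to the same basic interval. Then $F$ is affine on short germs with some factor $\Lambda$. If $\Lambda>1$, the lengths $\Lambda^j\varepsilon$ force some $F^j([p,p+\varepsilon])$ to contain the whole adjacent basic interval. If $\Lambda\le 1$, then $F([p,p+\varepsilon])\subseteq[p,p+\varepsilon]$. For small $\varepsilon$ the set $\bigcup_{i<Q}f^i([p,p+\varepsilon])$ is then a proper closed invariant set with interior, again a contradiction.

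\emph{Topological alternative.} Both parts can also be done using density of periodic points for transitive tree maps. Take periodic points $x\neq y$ whose orbits avoid $P$, or $p$ and such an $x$ for the germ, and suppose no iterate of the arc they span has a point of $P$ in its interior. Then that arc is permuted homeomorphically, and its finitely many images cover $T$. Since $G_f$ is not a cycle, some basic interval has an image containing a point of $P$ in its interior. The preimage of that point would have to be an endpoint of one of these images, putting a point of $P$ on the orbit of $x$ or $y$, which contradicts the choice.
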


\section{Markovian dynamics and measures of maximal entropy}

In the following section we aim to establish relations between a mixing Markovian dynamical system $(\G, f)$, its Markovian graph $G_f$ and a one-sided topological Markov chain $(\Gamma^{+}_{G_f}, \sigma)$ defined with respect to $G_f$. Firstly, we will present a well-known classification framework for Markovian graphs as well as classical results on the existence and uniqueness of maximal measures for two-sided topological Markov chains defined on these graphs. Then we will proceed with establishing a similar framework for the case of one-sided topological Markov chains. Since mixing Markovian maps are non-invertible, the approach for two-sided topological Markov chains is not valid in our considerations. We will finish by providing results on the existence and uniqueness of measures of maximal entropy for mixing Markovian maps based on the properties of their Markovian graphs.

\subsection{Theory on graph representation of countable topological Markov chains}\label{Markov_graph_theory}
The following theory is due to 
Gurevich \cite{Gurevich_Zargaryan,Gurevich_infinite_graphs_no_MME, Gurevich_MME_iff},
Ruette \cite{Ruette}, 
Salama \cite{Salama_entropy_R},
Vere-Jones \cite{Vere} and others. Our presentation follows \cite{Ruette}. (Note that the original papers by Gurevich are in Russian. We state and cite them based on the contents of \cite{Ruette}.)

Let $G$ be an oriented graph with a countable set of vertices $V(G)$. Assume that for each pair of vertices $u, v\in V(G)$ there is at most one arrow $u\to v$. We will call a \emph{path} in $G$ a sequence of vertices $(u_0, \dots, u_n)$ such that $u_{k}\to u_{k+1}$ for $k=0,\dots n-1$. A path is called a \emph{loop} if $u_0=u_n$.

We will write $H\subset G$ for all subgraphs $H$ of $G$. If $W$ is a subset of $V(G)$, then the complement of $V(G)\setminus W$ is denoted by $W^c$. Note that in this setting $W, W^c$ determine subgraphs of $G$, where we consider a subset of $E(G)$ composed of all arrows between two vertices from either $W$ or $W^{c}$.

In \cite{Vere}, Vere-Jones presented a classification of strongly connected oriented graphs using the following quantities:
\begin{enumerate}
    \item $p_{uv}^G(n)$ - the number of paths $(u=u_0, u_1, \dots, u_{n-1}, u_n=v)$ of length $n$ between $u, v$. We set $R_{uv}(G)$ to be the radius of convergence of series $\sum p_{uv}^G(n)z^n$.
    \item $f_{uv}^G(n)$ - the number of paths $(u=u_0, u_1, \dots, u_{n-1}, u_n=v)$ between $u, v$ with $u_k\neq v$ for $k\neq n$. We set $L_{uv}(G)$ to be the radius of convergence of series $\sum f_{uv}^G(n)z^n$.
\end{enumerate}
For a strongly connected oriented graph $G$ the value $R_{uv}(G)$ does not depend on $u, v$ and thus can be denoted by $R(G)$ \cite{Vere}. Vere-Jones proposed the following three classes of graphs: transient, null recurrent and positive recurrent. (For further details on this classification, as well as detailed comparisons on behaviours of the above mentioned quantities, see e.g. \cite[Table 1]{Ruette}.)

Let $G$ be a strongly connected oriented graph. Define the set
\begin{equation*}
    \Gamma_G=\{(v_n)_{n\in \mathbb{Z}}\colon v_n\to v_{n+1}\textnormal{ in }G\}\subset (V(G))^{\mathbb{Z}}.
\end{equation*}
The set $\Gamma_G$ contains all two-sided infinite paths in the graph $G$. The topological Markov chain on $G$ is the system $(\Gamma_G, \sigma)$, where $\sigma$ is the standard shift map. We endow $V(G)$ with the discrete topology and induce topology on $\Gamma_G$ from $V(G)^\mathbb{Z}$. The space $\Gamma_G$ is compact only when $G$ is finite. However, we can consider the one point compactification $V(G)\cup\{\infty\}$, defining $\overline{\Gamma}_G$ as the closure of $\Gamma_G$ in $(V(G)\cup\{\infty\})^{\mathbb{Z}}$.

We define the graph entropy $h(G)$ based on the topological entropy of an associated topological Markov chain. Gurevich presents this definition of entropy in \cite{Gurevich_infinite_graphs_no_MME}:
\begin{equation*}
    h(G)=\sup\{\htop(\Gamma_H, \sigma)\colon H\subset G\textnormal{ is a finite subgraph}\}.
\end{equation*}

Note that if $G$ is a finite graph, then $h(G)=\htop(\Gamma_G, \sigma)$ since the latter is well defined. The graph entropy can be also computed combinatorially, as the exponential growth of the number of paths with fixed endpoints.

\begin{theorem}[{\cite[Proposition 1.2]{Ruette}}]\label{different_uv_same_limit_entropy}
    Let $G$ be a strongly connected oriented graph. Then for any vertices $u,v\in V(G)$ we have
    \begin{equation*}
        h(G)=\lim_{n\to\infty}\frac{1}{n}\log p_{uv}^G(n)=-\log R(G).
    \end{equation*}
\end{theorem}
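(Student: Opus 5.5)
We prove the two equalities separately. Write $h(G)=\sup_H \htop(\Gamma_H,\sigma)$ over finite subgraphs $H\subset G$. For a finite strongly connected graph, $\htop(\Gamma_H,\sigma)=\log\rho(A_H)$, where $\rho$ is the Perron--Frobenius eigenvalue of the adjacency matrix $A_H$. This eigenvalue equals $\limsup_n \frac1n\log p^H_{uv}(n)$ for any vertices $u,v$ of $H$.

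\textbf{Step 1: the limit exists and does not depend on $u,v$.}
\begin{enumerate}
\item First take $u=v$. Concatenating loops at $u$ gives $p_{uu}(m+n)\ge p_{uu}(m)p_{uu}(n)$, so $\log p_{uu}(n)$ is superadditive.
\item Also $p_{uu}(n)\le$ (number of paths of length $n$ from $u$). This count is finite because of the standing convention of at most one arrow between two vertices. We only need $\frac1n\log p_{uu}(n)$ to have a limit in $[0,\infty]$, and Fekete's lemma applied on the set of $n$ with $p_{uu}(n)>0$ gives this.
\item Aperiodicity is not assumed. If the period is $d>1$, then $p_{uu}(n)=0$ off $d\mathbb{N}$. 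In that case the ``limit'' should be read along $n$ with $p_{uv}(n)>0$, or as a $\limsup$; I will follow Ruette's convention here.
\item For $u\ne v$, strong connectivity gives paths $v\to u$ of length $a$ and $u\to v$ of length $b$. Then
$$p_{uu}(n-b)\le p_{uv}(n)\le p_{uu}(n+a),$$
so $\frac1n\log p_{uv}(n)$ has the same limit. Call it $\lambda$.
\item By the Cauchy--Hadamard formula, $R_{uv}(G)=e^{-\lambda}$. This gives $\lambda=-\log R(G)$.
\end{enumerate}

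\textbf{Step 2: $h(G)=\lambda$.}
\begin{enumerate}
\item Lower bound on $\lambda$. For any finite $H\subset G$, restrict to a strongly connected component $H'$ of maximal spectral radius; this is where the entropy of $\Gamma_H$ sits. Choose $u,v\in H'$ and note $p^{H'}_{uv}(n)\le p^G_{uv}(n)$. This gives $\htop(\Gamma_H,\sigma)\le\lambda$, and hence $h(G)\le\lambda$.
\item Upper bound on $\lambda$. This is the main obstacle, because $G$ may be infinite and not locally finite. Fix $n$ and let $H_n$ be the finite subgraph spanned by the vertices on all loops of length $n$ at $u$, of which there are finitely many. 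Then
$$p^{H_n}_{uu}(kn)\ge p^G_{uu}(n)^k,$$
so $\log\rho(A_{H_n})\ge \frac1n\log p^G_{uu}(n)$.
\item $H_n$ is strongly connected through $u$, so $\htop(\Gamma_{H_n},\sigma)\ge\frac1n\log p^G_{uu}(n)$.
\item Letting $n\to\infty$ gives $h(G)\ge\lambda$.
\end{enumerate}

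The delicate point is the bookkeeping for the finite case: identifying $\htop(\Gamma_H,\sigma)$ with the growth rate of loops in $H$, including when $H$ is not strongly connected. I will handle this by the standard spectral-radius description, with the variational principle as an alternative.
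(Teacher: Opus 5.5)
\paragraph{Comparison with the paper.} The paper gives no proof of this statement; it cites \cite{Ruette}. Your route is the standard Gurevich argument behind that citation:
\begin{itemize}
\item superadditivity and strong connectivity give existence of the growth rate $\lambda$ and its independence of $u,v$;
\item Cauchy--Hadamard identifies $\lambda$ with $-\log R(G)$;
\item irreducible components of finite subgraphs give $h(G)\le\lambda$;
\item finite subgraphs spanned by loops at $u$ give $h(G)\ge\lambda$.
\end{itemize}
Reading the limit along those $n$ with $p_{uv}(n)>0$ in the periodic case is the right interpretation.

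\paragraph{The false step.} Item~2 of Step~1, reused in item~2 of Step~2, is false as stated. The convention of at most one arrow $u\to v$ limits the multiplicity of arrows, not the out-degree. So with countably many vertices, the number of paths of length $n$ from $u$ can be infinite. For example, take arrows $u\to w_i$ and $w_i\to u$ for all $i\in\mathbb{N}$. This graph is strongly connected and $p_{uu}(2)=\infty$. In such a graph the loops of length $n$ at $u$ do not span a finite subgraph, so your $H_n$ is not admissible in the definition of $h(G)$.

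\paragraph{The repair.} It is short.
\begin{itemize}
\item In Step~2, take an arbitrary finite set $\mathcal{Q}$ of loops of length $n$ at $u$, and let $H$ be the finite, strongly connected subgraph they span. Concatenation still gives $p^{H}_{uu}(kn)\ge|\mathcal{Q}|^k$, so $h(G)\ge\frac1n\log|\mathcal{Q}|$. Taking the supremum over $\mathcal{Q}$ gives $h(G)\ge\frac1n\log p^G_{uu}(n)$, including the value $+\infty$.
\item In Step~1, let the values lie in $[0,\infty]$. Suppose $p_{uu}(n_0)=\infty$ for some $n_0$. All large multiples of the period $d$ lie in the additive semigroup $\{n: p_{uu}(n)>0\}$, so $n-n_0$ lies in it for all large $n\in d\mathbb{N}$. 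Superadditivity then gives $p_{uu}(n)\ge p_{uu}(n-n_0)\,p_{uu}(n_0)=\infty$ for those $n$. Hence $\lambda=\infty$, consistent with $R(G)=0$ and $h(G)=\infty$.
\end{itemize}

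For the Markovian graphs to which the paper applies this theorem, Lemma~\ref{finite_in_out} gives local finiteness, so the issue does not arise there. The statement itself, however, is for arbitrary countable strongly connected graphs. With these adjustments your proof is complete.
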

Another viable approach for computing the entropy of $G$ is to consider the closure $\overline{\Gamma}_G$. Moreover, the Variational Principle with respect to $\Gamma_G$ yields the same value of entropy.
\begin{theorem}[{\cite[Theorem 1.3]{Ruette}}]\label{entropia_grafu_domkniecie_shiftu}
    Let $G$ be an oriented graph. Then
    \begin{equation*}
        h(G)=\htop(\overline{\Gamma}_G, \sigma)=\sup\{h_\mu(\Gamma_G, \sigma)\colon\mu\in\mathcal{M}_\sigma(\Gamma_G)\}.
    \end{equation*}
\end{theorem}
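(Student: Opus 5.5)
The plan is to prove the chain
\[
h(G)\le \sup\{h_\mu(\Gamma_G,\sigma)\colon \mu\in\mathcal{M}_\sigma(\Gamma_G)\}\le \htop(\overline{\Gamma}_G,\sigma)\le h(G).
\]
The first two inequalities are soft. The third carries the real content and is proved by a counting argument with finite coarsenings of the vertex partition.

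\emph{First inequality.} Let $H\subset G$ be a finite subgraph. Then $\Gamma_H\subset\Gamma_G$ is a compact $\sigma$-invariant set. The Variational Principle (Theorem~\ref{VP}) on $(\Gamma_H,\sigma)$ gives invariant measures $\mu$ supported on $\Gamma_H$ with $h_\mu$ arbitrarily close to $\htop(\Gamma_H,\sigma)$. Each such $\mu$ is an invariant measure on $\Gamma_G$, with the same entropy. Taking the supremum over $H$ gives the first inequality.

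\emph{Second inequality.} Every invariant measure on $\Gamma_G$ is an invariant measure on the compact system $\overline{\Gamma}_G$, with the same entropy. The Variational Principle on $\overline{\Gamma}_G$ then gives the second inequality.

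\emph{Third inequality.} By the Variational Principle it suffices to show $h_\mu(\overline{\Gamma}_G,\sigma)\le h(G)$ for every ergodic $\mu$ on $\overline{\Gamma}_G$. We may assume $h(G)<\infty$.

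\begin{enumerate}
\item For a finite set $F\subset V(G)$, let $\mathcal{P}_F$ be the finite partition of $\overline{\Gamma}_G$ according to whether $x_0=v$ for $v\in F$, or $x_0\notin F$; the last case includes $x_0=\infty$.
\item Take an increasing exhaustion $F_k\uparrow V(G)$. The partitions $\mathcal{P}_{F_k}$ increase, and their join is the partition by the $0$-th coordinate in $V(G)\cup\{\infty\}$. That partition is generating for the shift. Hence $h_\mu(\sigma)=\lim_k h_\mu(\sigma,\mathcal{P}_{F_k})$ by the standard Kolmogorov--Sinai approximation argument.
\item It then suffices to bound the exponential growth rate of the number $N_n(F)$ of $\mathcal{P}_F$-names of length $n$ occurring in $\overline{\Gamma}_G$ by $h(G)$. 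Indeed, $h_\mu(\sigma,\mathcal{P}_F)\le\limsup_n\frac1n\log N_n(F)$.
\end{enumerate}

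\emph{Counting step (main obstacle).} Any $n$-name occurring in $\overline{\Gamma}_G$ occurs in some genuine path of $\Gamma_G$, since $\Gamma_G$ is dense in its closure and names depend only on finitely many coordinates. Such a name is determined by three pieces of data:
\begin{itemize}
\item its first and last positions $i\le j$ carrying a symbol of $F$, which allows at most $(n+1)^2$ choices;
\item the $F$-symbols at those two positions, which allows at most $|F|^2$ choices;
\item the $F$-pattern in between, which is the coarsening of a genuine path of length $j-i$ from $u\in F$ to $v\in F$.
\end{itemize}
The all-``$*$'' name contributes only one more word. Therefore
\[
N_n(F)\le 1+(n+1)^2\,|F|^2\max_{u,v\in F,\ m\le n}p^G_{uv}(m).
\]
To bound $p^G_{uv}(m)$ by $e^{m(h(G)+\varepsilon)}$ up to a constant, I would use Theorem~\ref{different_uv_same_limit_entropy} on the strongly connected component containing $u$ and $v$. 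If $u$ and $v$ lie in different components, a path from $u$ to $v$ passes through a chain of components. The chain is finite for fixed $u,v$, but the number of intermediate components must be controlled. My plan is to bound the number of such paths by summing over the finitely many transition points, since each time spent in a component contributes at most $e^{(h(G)+\varepsilon)\ell}$ by Theorem~\ref{different_uv_same_limit_entropy}. The count then grows at most polynomially times $e^{n(h(G)+\varepsilon)}$.

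The delicate point is uniformity in the component decomposition, because the paths between $u$ and $v$ may pass through infinitely many components. If this cannot be handled directly, I would instead first reduce to ergodic $\mu$ supported on the closure of a single component. This is plausible because recurrence of $\mu$-typical points forces the finite vertices they visit to be visited infinitely often, and hence to lie in one component. After that reduction, Theorem~\ref{different_uv_same_limit_entropy} applies directly.

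Granting the counting bound, we get $\frac1n\log N_n(F)\to\le h(G)+\varepsilon$. Letting $\varepsilon\to0$ and then $F_k\uparrow V(G)$ yields $h_\mu\le h(G)$, which completes the chain.
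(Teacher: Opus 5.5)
The paper does not prove this statement; it quotes it from Ruette (going back to Gurevich), so your argument has to stand on its own. Its architecture is sound. The first two inequalities are fine, as is the reduction of the third to ergodic $\mu$. So is $h_\mu(\sigma)=\lim_k h_\mu(\sigma,\mathcal{P}_{F_k})$, via the increasing finite algebras $\bigvee_{|i|\le k}\sigma^{-i}\mathcal{P}_{F_k}$. The observation that every $\mathcal{P}_F$-name occurring in $\overline{\Gamma}_G$ is realized by a genuine path also holds, because name cylinders are clopen when the points of $F$ are isolated.

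The gap is in the counting step as you primarily set it up. You bound the names by genuine path counts $p^G_{uv}(m)$. When $u$ and $v$ lie in different strongly connected components, this quantity is not controlled by $h(G)$ and can even be infinite. For example, take loops $u\to u$ and $v\to v$ together with edges $u\to w_k\to v$ for all $k\in\mathbb{N}$. Then $h(G)=0$, but $p^G_{uv}(2)=\infty$. Summing over transition points does not rescue this. There may be infinitely many transition edges, and the constants implicit in Theorem~\ref{different_uv_same_limit_entropy} are not uniform over infinitely many components.

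The missing idea is to count names rather than paths. A $\mathcal{P}_F$-name records only the $F$-letters. Along any path, the components of the successive $F$-letters are non-increasing in the reachability order, so each component occupies one contiguous block and there are at most $|F|$ blocks. For each block, record the positions and symbols of its first and last $F$-letter; this costs at most $((n+1)|F|)^{2|F|}$ choices, and between blocks the name is all $*$. Inside a block running from $a$ to $b$ in a component $C$, every vertex of the realizing path lies in $C$, since it is reachable from $a$ and reaches $b$. That part of the name is therefore the coarsening of one of the $p^C_{ab}(m)$ paths. Because $h(C)\le h(G)<\infty$, each $p^C_{ab}(m)$ is finite: infinitely many loops of length $m$ at a vertex would force $h(G)=\infty$. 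By Theorem~\ref{different_uv_same_limit_entropy}, $p^C_{ab}(m)\le K_\varepsilon e^{m(h(G)+\varepsilon)}$, uniformly over the finitely many pairs in $F$. Multiplying over blocks gives $\limsup_n\frac1n\log N_n(F)\le h(G)+\varepsilon$.

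Your fallback also works, but it needs exactly the ingredients you only called plausible:
\begin{enumerate}
\item For ergodic $\mu\neq\delta_{\infty^{\mathbb{Z}}}$ (that measure has entropy $0$), $\mu$-a.e.\ point visits only vertices of positive mass, each infinitely often in both directions.
\item A block $a\ldots b\ldots a$ of such a point is realized by a genuine path, so $a$ and $b$ lie in one component $C$.
\item Consequently, realizing paths between $F$-letters stay inside $C$.
\end{enumerate}
After this, your simple bound goes through with $p^C_{uv}$ in place of $p^G_{uv}$.
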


It remains the question when the supremum is attained, i.e. there exists a $\sigma$-invariant measure $\mu$ which attains the entropy $h(G)$ in the assertion above. The Vere-Jones classification allows us to give a definitive answer, relying purely on the characteristics of $G$.

\begin{theorem}[{\cite[Theorem 3.1]{Ruette}}]\label{MME_existence}
    Let $G$ be a strongly connected oriented graph of finite positive entropy. Then the topological Markov chain $(\Gamma_G, \sigma)$ admits an ergodic measure of maximal entropy if and only if the graph is positive recurrent. Moreover, this measure is unique.
\end{theorem}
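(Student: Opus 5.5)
We would follow Gurevich's original strategy, as presented in \cite{Ruette}: build the measure explicitly in the positive recurrent case, and in general reduce the problem, by inducing on a cylinder, to a variational inequality over first-return loops. Fix a vertex $u\in V(G)$ and put $R=R(G)$, so that $h(G)=-\log R\in(0,\infty)$ by Theorem~\ref{different_uv_same_limit_entropy}. We use the Vere-Jones characterisations. $G$ is recurrent if and only if $\sum_n f^G_{uu}(n)R^n=1$. $G$ is positive recurrent if and only if, in addition, $\sum_n n f^G_{uu}(n)R^n<\infty$. Equivalently, $G$ is positive recurrent if and only if there are positive vectors $\ell,r$ with $\ell A=R^{-1}\ell$ and $Ar=R^{-1}r$, where $A$ is the adjacency matrix of $G$, satisfying $\sum_v \ell_v r_v<\infty$.

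\emph{Existence in the positive recurrent case.} Normalise so that $\sum_v\ell_v r_v=1$. Define a stochastic matrix $P_{vw}=R\,A_{vw}\,r_w/r_v$. Its stationary vector is $\pi_v=\ell_v r_v$, and we let $\mu$ be the associated two-sided Markov measure on $\Gamma_G$. Ergodicity follows from strong connectedness. A cylinder of length $n$ from $v$ to $w$ receives mass $\ell_v R^n r_w$, and this factorisation lets us compute $h_\mu$ from the Markov entropy formula $-\sum_v\pi_v\sum_w P_{vw}\log P_{vw}$. The quantities $\log r_v$ telescope, provided $\sum_v \pi_v|\log r_v|$-type terms are finite. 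If they are not, we instead compute $h_\mu$ via Abramov's formula on the cylinder $[u]$, as in the next step, which avoids the issue. Either way we obtain $h_\mu=-\log R=h(G)$, which by Theorem~\ref{entropia_grafu_domkniecie_shiftu} is maximal.

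\emph{Upper bound, necessity and uniqueness.} Let $\mu$ be any ergodic measure with $h_\mu>0$. Then $\mu([v])>0$ for some $v$, and since $R$ does not depend on the base vertex we may take $u=v$. The first-return map to $[u]$ is conjugate to a shift over the countable alphabet $\mathcal{F}$ of first-return loops at $u$. There are $f^G_{uu}(n)$ loops of length $n$, and a loop $\gamma$ has length $|\gamma|$. Abramov's and Kac's formulas give
\begin{equation*}
h_\mu=\mu([u])\,h_{\mu_u}(\sigma_u)\quad\text{and}\quad \mu([u])^{-1}=\int |\gamma|\,d\mu_u .
\end{equation*}
Let $q$ be the one-dimensional marginal of $\mu_u$ on $\mathcal{F}$. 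Since an i.i.d.\ process maximises entropy among stationary processes with a given marginal, $h_{\mu_u}\le H(q)$, with equality if and only if $\mu_u$ is Bernoulli. The Gibbs inequality then gives
\begin{equation*}
H(q)+\log R\sum_\gamma q_\gamma|\gamma| \le \log\sum_{\gamma}R^{|\gamma|}=\log\sum_n f^G_{uu}(n)R^n\le 0 .
\end{equation*}
Equality in the first step holds if and only if $q_\gamma=R^{|\gamma|}$, and in the second if and only if $G$ is recurrent. Dividing by $\int|\gamma|\,d\mu_u$ gives $h_\mu\le -\log R$. If equality holds, then $G$ is recurrent and $\mu_u$ is the Bernoulli measure with weights $R^{|\gamma|}$. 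Kac's formula forces $\sum_n n f^G_{uu}(n)R^n=\int|\gamma|\,d\mu_u<\infty$, which is positive recurrence. Moreover, $\mu_u$ determines $\mu$, because $\mu$ is the suspension of $\mu_u$ by the return time. Hence any MME is unique, and it coincides with the Markov measure constructed above.

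\emph{Main obstacle.} The delicate point is making the inequality chain rigorous when $H(q)$ might be infinite, or when the loop partition does not generate. We will first note that the partition into first-return loops generates for the induced map, since a two-sided sequence is recovered from its loop sequence. We then handle $H(q)=\infty$ by truncation: apply the Gibbs inequality to $q$ conditioned on the finite set $\{|\gamma|\le N\}$, and let $N\to\infty$. The finite, positive entropy hypothesis guarantees $R\in(0,1)$, so every term in the chain is meaningful.
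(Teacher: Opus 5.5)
The paper gives no argument for this statement. It imports it verbatim from Ruette's Theorem~3.1 (Gurevich's theorem) and treats it as a black box, like the rest of the Vere-Jones theory in Section~3.1. Your outline is a correct reconstruction of the classical proof behind that citation. For existence you use a Parry-type Markov measure with cylinder masses $\ell_{v_0}R^n r_{v_n}$. For the upper bound, necessity and uniqueness you induce on $[u]$, apply Abramov and Kac, and use the Gibbs inequality against the sub-probability weights $R^{|\gamma|}$ on first-return loops. Compared with the citation, your route gives a self-contained proof. It rests only on the renewal characterisation of (positive) recurrence through $F_{uu}(z)=\sum_n f^G_{uu}(n)z^n$ (Ruette's Table~1), plus the eigenvector criterion if you keep your existence step. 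The citation buys brevity.

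A few points to tighten:

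Equality in the Gibbs step alone gives only $q_\gamma=R^{|\gamma|}/F_{uu}(R)$. You get $q_\gamma=R^{|\gamma|}$ only together with $F_{uu}(R)=1$.

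Your ``main obstacle'' is not really one. Kac gives $\sum_\gamma q_\gamma|\gamma|<\infty$, and the truncated Gibbs inequality then yields $H(q)\le -\log R\sum_\gamma q_\gamma|\gamma|<\infty$. That finiteness is what legitimises the clause ``equality iff $\mu_u$ is Bernoulli''.

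For uniqueness, $u$ was chosen depending on $\mu$, so say explicitly why the comparison measure also charges $[u]$. The Markov measure has $\pi_u=\ell_u r_u>0$. More generally, any MME charges every vertex: every vertex lies on some first-return loop at $u$, and all weights $R^{|\gamma|}$ are positive.

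Finally, you can drop the eigenvector criterion and the telescoping worry altogether. In the positive recurrent case, take the tower over the Bernoulli measure on $\mathcal{F}^{\mathbb{Z}}$ with weights $R^{|\gamma|}$ and roof $|\gamma|$. The base is a probability measure because $F_{uu}(R)=1$. The tower is normalisable precisely because $\sum_n n f^G_{uu}(n)R^n<\infty$. Abramov then gives entropy $H(q)/\sum_\gamma q_\gamma|\gamma|=-\log R$, so existence becomes the mirror image of your equality case.
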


We will now present a few results which help determining whether a graph $G$ is positive recurrent.

\begin{theorem}[{\cite[Proposition 2.2]{Ruette}}]\label{Salama_R<L}
    Let $G$ be a strongly connected oriented graph. If $G$ is transient or null recurrent, then $R(G)=L_{uu}$ for all vertices $u\in V(G)$. If there exists a vertex $u$ such that $R(G)<L_{uu}$, then $G$ is positive recurrent.
\end{theorem}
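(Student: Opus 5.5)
The statement has two parts, and I would prove each from the generating-function definitions of $R(G)$ and $L_{uu}$ together with the first-return decomposition of loops at a fixed vertex $u$. Write $P(z)=\sum_{n\ge1}p^G_{uu}(n)z^n$ and $F(z)=\sum_{n\ge1}f^G_{uu}(n)z^n$. Cutting a loop at $u$ at each of its visits to $u$ gives the renewal identity
\[
1+P(z)=\frac{1}{1-F(z)}
\]
for $0\le z<R(G)$, and more generally wherever $F(z)<1$. Since $f^G_{uu}(n)\le p^G_{uu}(n)$, we always have $L_{uu}\ge R(G)$.

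\emph{Second assertion.} Suppose $R(G)<L_{uu}$. Then $F$ is analytic and finite on $[0,L_{uu})$, which contains $R=R(G)$.
\begin{itemize}
\item If $F(R)<1$, the renewal identity would let $1+P(z)$ extend analytically past $R$. The series $P$ has nonnegative coefficients, so by Pringsheim's theorem its radius of convergence would exceed $R$, a contradiction.
\item If $F(R)>1$, continuity gives some $z<R$ with $F(z)=1$. At that point $P(z)=\infty$, again contradicting $z<R$.
\end{itemize}
Hence $F(R)=1$, so $G$ is recurrent. Moreover $F'(R)=\sum n f^G_{uu}(n)R^n<\infty$, because a power series with nonnegative coefficients is differentiable at any point strictly inside its radius of convergence, and $R<L_{uu}$. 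A finite mean return, $F'(R)<\infty$, is exactly the Vere-Jones definition of positive recurrence (\cite[Table 1]{Ruette}).

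\emph{First assertion.} Suppose $G$ is transient or null recurrent; we must show $L_{uu}=R$. Assume instead that $L_{uu}>R$. The argument above then forces $G$ to be positive recurrent, contradicting the hypothesis. So $L_{uu}\le R$, and combined with $L_{uu}\ge R$ this gives equality.

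The main obstacle is the first case of the dichotomy, $F(R)<1$: it needs the Pringsheim argument, that a nonnegative-coefficient power series has a singularity on the positive real axis at its radius of convergence. One must also confirm that the definitions of transient, null recurrent and positive recurrent in \cite{Vere,Ruette} match the conditions $F(R)<1$, $F(R)=1$ with $F'(R)=\infty$, and $F(R)=1$ with $F'(R)<\infty$. Since the result is quoted from \cite[Proposition 2.2]{Ruette}, I would follow that source's conventions and cite it for this classification.
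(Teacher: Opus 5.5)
The paper gives no proof of this statement. It is quoted from Ruette's Proposition~2.2 and used as a black box, so there is no internal argument to compare against. Your argument is correct and is the standard proof of this fact. It uses the first-return identity $1+P=1/(1-F)$, rules out $F(R)<1$ and $F(R)>1$ when $R<L_{uu}$, and then gets $F'(R)<\infty$. The first assertion follows by contraposition, using that the Vere-Jones class does not depend on the chosen vertex.

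The step you call the main obstacle is easier than you make it, and Pringsheim's theorem is not needed:
\begin{itemize}
\item The identity $1+P(z)=\sum_{k\ge 0}F(z)^k$ holds in $[0,\infty]$ for every real $z\ge 0$, because all coefficients are nonnegative.
\item Suppose $F(R)<1$. Since $F$ is continuous on $[0,L_{uu})$, there is some $z\in(R,L_{uu})$ with $F(z)<1$.
\item Then $P(z)<\infty$ at a point $z>R$, which contradicts the definition of $R$.
\end{itemize}
The same identity at $z=R$ shows $P(R)<\infty\iff F(R)<1$. So the transient/recurrent split is the same whether it is defined through $P$ or through $F$, which settles your concern about matching conventions.

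A minor notational point: $F'(R)=\sum_n n f^G_{uu}(n)R^{n-1}$, not $\sum_n n f^G_{uu}(n)R^{n}$. This does not affect the argument, since the two are finite together once $R>0$. Here $R>0$ is forced: the case $R=0<L_{uu}$ has $F(R)=0<1$, which your first case already excludes.
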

If $G$ has positive entropy, then the notion of positive recurrence is replaced by strong positive recurrence.
\begin{definition}
    We say that a strongly connected oriented graph $G$ is \emph{strongly positive recurrent} if $R(G)<L_{uu}$ for all vertices $u\in V(G)$.
\end{definition}
\begin{theorem}[{\cite[Theorem 2.7]{Ruette}}]\label{Ruette_subgraphs_SPR}
    Let $G$ be a strongly connected oriented graph of positive entropy. Then the following properties are equivalent:
    \begin{enumerate}
        \item the graph $G$ is strongly positive recurrent,
        \item there exists a vertex $u\in V(G)$ such that $R(G)<L_{uu}$,
        \item $G$ has no proper subgraph of equal entropy.
    \end{enumerate}
\end{theorem}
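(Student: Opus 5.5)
The plan is to prove the implications (1)$\Rightarrow$(2)$\Rightarrow$(3)$\Rightarrow$(1). Throughout I work with the generating functions $P_{uu}(z)=\sum_n p^G_{uu}(n)z^n$ and $F_{uu}(z)=\sum_n f^G_{uu}(n)z^n$. They satisfy the renewal identity $P_{uu}=1/(1-F_{uu})$, because every loop at $u$ splits uniquely into first-return loops. By Theorem~\ref{different_uv_same_limit_entropy}, $R(G)=e^{-h(G)}$ is the radius of convergence of $P_{uu}$, and this is finite and positive since the entropy is positive. The implication (1)$\Rightarrow$(2) is immediate, since $V(G)\neq\emptyset$.

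For (2)$\Rightarrow$(3), fix $u$ with $R:=R(G)<L_{uu}$. Since $F_{uu}$ has nonnegative coefficients and is analytic on $|z|<L_{uu}$, the singularity of $P_{uu}$ at $R$ must be a zero of $1-F_{uu}$. Hence $F_{uu}(R)=1$.

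Now let $H\subsetneq G$ be a proper subgraph; I show $h(H)<h(G)$. Since $h(H)$ is the supremum of the entropies of the strongly connected components of $H$, I may assume $H$ is strongly connected and nonempty. Pick a vertex $w$ of $H$. By strong connectivity of $G$, some loop at $w$ in $G$ uses a vertex or edge not in $H$. I then compare first-return loops at $w$ in $H$ and in $G$:
\begin{itemize}
\item $F^H_{ww}\le F^G_{ww}$ coefficientwise, with at least one strict inequality.
\item Since $R<L_{uu}$ holds at one vertex, I want to transfer it to $w$.
\end{itemize}
The cleanest way to avoid the transfer is to choose $w=u$ whenever $u\in V(H)$. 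Otherwise $H$ avoids $u$ entirely, and then every loop $\ell$ at any $w\in V(H)$ gives a first-return loop at $u$ of length $|\ell|+k$: fix a path $u\to w$ and a path $w\to u$ in $G$ whose interior vertices avoid $u$, of total length $k$, and insert $\ell$ between them. This yields $f^G_{uu}(n+k)\ge p^H_{ww}(n)$, hence $R(H)\ge L_{uu}>R$ and $h(H)<h(G)$.

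In the case $u\in V(H)$, strict coefficientwise domination gives $F^H_{uu}(R)<F^G_{uu}(R)=1$. Moreover, $F^H_{uu}$ is analytic on a disc of radius $L_{uu}>R$. Therefore $P^H_{uu}=1/(1-F^H_{uu})$ converges slightly beyond $R$, so $R(H)>R(G)$, which again gives $h(H)<h(G)$.

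For (3)$\Rightarrow$(1), I argue by contraposition. Suppose $R(G)=L_{uu}$ for some $u$; I want a proper subgraph of equal entropy, and I take $H=G\setminus\{u\}$. Every first-return loop at $u$ has the form $u\to a\rightsquigarrow b\to u$ with the middle part in $H$. This suggests the bound
\begin{equation*}
f^G_{uu}(n)\le\sum_{a,b}p^{H}_{ab}(n-2),
\end{equation*}
where $a$ ranges over out-neighbours and $b$ over in-neighbours of $u$. If $u$ has finitely many neighbours, the exponential growth of the right-hand side is at most $h(H)$, using Theorem~\ref{different_uv_same_limit_entropy} on each strongly connected component of $H$. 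Hence $-\log L_{uu}\le h(H)\le h(G)=-\log R(G)=-\log L_{uu}$, so $h(H)=h(G)$, contradicting (3).

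The main obstacle is this last step when $u$ has infinitely many neighbours, because the sum over $a,b$ may then grow faster than any single $p^H_{ab}$. For the graphs actually used in this paper, Lemma~\ref{finite_in_out} guarantees local finiteness, so the argument above suffices there. For general $G$, I would first try to exploit the freedom in choosing the vertex $u$: if $R=L_{uu}$ fails at some vertex then (2) holds and we are done, so we may assume $R=L_{vv}$ at every vertex. I would then try to locate a vertex with finite in- or out-degree, or else approximate $G$ by the finite subgraphs in Gurevich's definition of $h(G)$, and run the estimate on those finite subgraphs.
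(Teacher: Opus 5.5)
The paper gives no proof of this statement (it is quoted from Ruette), so I assess your argument on its own. Your (1)$\Rightarrow$(2) and (2)$\Rightarrow$(3) are sound. In (2)$\Rightarrow$(3), the reduction to strongly connected $H$ needs a bound uniform over the components of $H$, and your case analysis supplies it, since every component avoiding $u$ has entropy at most $-\log L_{uu}$.

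The genuine gap is (3)$\Rightarrow$(1), and it is not confined to vertices of infinite degree. You claim that $\sum_{a,b}p^H_{ab}(n-2)$ grows at exponential rate at most $h(H)$, with $H=G\setminus\{u\}$. This fails even when $u$ has a single in-neighbour and a single out-neighbour. Theorem~\ref{different_uv_same_limit_entropy} controls $p_{ab}$ only inside one strongly connected component. A path from $a$ to $b$ in $H$ may cross infinitely many components (even trivial ones), and branching along the way produces exponential growth that $h(H)$ does not see.

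Worse, the target equality $h(G\setminus\{u\})=h(G)$ is itself false for some bounded-degree graphs that are not strongly positive recurrent. Take vertices $u,a,b$, $x_k^{\pm}$ $(k\ge1)$, $y_j$ $(j\ge1)$, with edges:
$u\to a\to x_1^{\pm}$; $x_k^{\pm}\to x_{k+1}^{\pm}$ (all four); $x_k^{\pm}\to y_{m_k}$ for $k\in K=\{2^i\colon i\ge1\}$, where $m_k=k+2\log_2 k$; $y_j\to y_{j-1}$ $(j\ge2)$; $y_1\to b$; $b\to u$.
This graph is strongly connected with in- and out-degrees at most $3$. Since $G\setminus\{u\}$ is acyclic, $h(G\setminus\{u\})=0$. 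But $F_{uu}(z)=\sum_{k\in K}2^kz^{k+m_k+3}$ has radius $L_{uu}=2^{-1/2}$, and $F_{uu}(2^{-1/2})=2^{-3/2}\sum_{i\ge1}2^{-i}<1$. Hence $P_{uu}=(1-F_{uu})^{-1}$ also has radius $2^{-1/2}$, i.e.\ $R(G)=L_{uu}$ and $h(G)=\tfrac12\log 2>0$. So local finiteness (Lemma~\ref{finite_in_out}) does not rescue your argument for the graphs of this paper: the proper subgraph has to be chosen differently.

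Two ingredients are missing.

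First, you need a transfer lemma, i.e.\ a direct proof of (2)$\Rightarrow$(1). Suppose $R<L_{uu}$ and $v\neq u$. Cutting each first-return loop at $v$ at its first and last visits to $u$ gives $F_{vv}={}_uF_{vv}+A\,(1-{}_vF_{uu})^{-1}B$. Here ${}_uF_{vv}$ counts first-return loops at $v$ avoiding $u$, ${}_vF_{uu}$ counts first-return loops at $u$ avoiding $v$, and $A$, $B$ count paths $v\to u$ and $u\to v$ meeting $\{u,v\}$ only at their endpoints. Your insertion trick, together with ${}_vF_{uu}\le F_{uu}$, shows all four series have radius at least $L_{uu}>R$. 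Some first-return loop at $u$ passes through $v$, so ${}_vF_{uu}(R)<F_{uu}(R)=1$, hence $L_{vv}>R$. Your fallback ``if $R=L_{uu}$ fails at some vertex then (2) holds and we are done'' silently uses exactly this lemma, which the cycle (1)$\Rightarrow$(2)$\Rightarrow$(3)$\Rightarrow$(1) does not supply.

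Second, you need the right subgraph. If $G$ is not strongly positive recurrent, the transfer lemma gives $R=L_{uu}$ at every vertex. As $h(G)>0$, $G$ is not a cycle, so some $u$ has two out-edges $e_1\neq e_2$. A first-return loop at $u$ leaves $u$ exactly once, so $F_{uu}=F_1+F_2$, with $F_1$ counting the loops that start with $e_1$. One of $F_1,F_2$ has radius $L_{uu}$. Delete $e_2$ in the first case and $e_1$ in the second. This gives a proper subgraph $H$ with $\limsup_n\frac1n\log f^H_{uu}(n)=-\log L_{uu}=h(G)$. Hence $h(H)=h(G)$, by Theorem~\ref{different_uv_same_limit_entropy} applied to the component of $H$ containing $u$. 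This argument needs no local finiteness.

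A minor point: $R(G)>0$ requires $h(G)<\infty$, which must be assumed rather than deduced from positivity of the entropy.
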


\begin{remark}
    Finite topological Markov chains always admit an MME (see also \cite{Parry}).
\end{remark}

Let $G$ be a strongly connected oriented graph with a  globally bounded number of incoming/outgoing edges for all vertices in $V(G)$ and let $R(G)$ be radius of convergence defined above.
Let $A$ be its adjacency matrix $A=(a_{vw})_{v,w\in V(G)}$ that is, $a_{vw}=1$ iff $v\to w$ and $a_{vw}=0$ otherwise. 
A vector $x=(x_v)_{v\in V(G)}$ with $x\ge 0$, $x\not\equiv 0$, is called
\emph{$R$-subinvariant} if $Ax\le R^{-1}x$ coordinatewise, that is
$\sum_{w}a_{vw}x_{w}\le R^{-1}x_{v}$ for all $v$, and \emph{$R$-invariant} if equality
holds for all $v$.

\begin{theorem}[{\cite[Theorem 6.2]{Seneta}}]\label{Seneta:Rec}
Let $G$ be a strongly connected oriented graph with a bounded number of outgoing edges and let
$A$ be the associated adjacency matrix. If $G$ is
positively recurrent, then it possesses an $R$-subinvariant vector that is unique up to a
positive scalar multiple, and this vector is $R$-invariant.
\end{theorem}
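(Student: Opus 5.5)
The plan is to build the $R$-invariant vector explicitly from first-passage generating functions, and then to show that any $R$-subinvariant vector dominates it. Throughout, fix a reference vertex $u\in V(G)$ and write $R=R(G)$.

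For every $v\in V(G)$ let $F_{vu}(z)=\sum_{n\ge1} f^G_{vu}(n)z^n$. Here $f^G_{vu}(n)$ counts paths of length $n$ from $v$ to $u$ that hit $u$ only at the final step; for $v=u$ this is the first-return series $F_{uu}$. The candidate vector is
\begin{equation*}
x_u=1,\qquad x_v=F_{vu}(R)\quad (v\neq u).
\end{equation*}
Recall the classical renewal identity $\sum_n p^G_{uu}(n)z^n = \bigl(1-F_{uu}(z)\bigr)^{-1}$. In the Vere-Jones classification recurrence (in particular positive recurrence) means that $\sum_n p^G_{uu}(n)R^n=\infty$, and the identity then gives $F_{uu}(R)=1$. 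Establishing this boundary fact cleanly is the step I expect to need the most care: all the rest is combinatorial bookkeeping. I would take it directly from the Vere-Jones criterion as presented in \cite[Table 1]{Ruette}, together with the fact that $F_{uu}(R)\le 1$ always holds.

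Next I check that $x$ is finite, positive and $R$-invariant. For $v\neq u$, choose a shortest path from $u$ to $v$, say of length $k$; it meets $u$ only at its start. Concatenating it with a first-passage path from $v$ to $u$ gives a first-return loop at $u$. Hence $R^kF_{vu}(R)\le F_{uu}(R)=1$, so $x_v<\infty$. Strong connectivity gives a first-passage path from $v$ to $u$, so $x_v>0$.

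Splitting a first-passage path according to its first step gives, for every $v$ (including $v=u$),
\begin{equation*}
F_{vu}(z)=z\,a_{vu}+z\sum_{w\neq u}a_{vw}F_{wu}(z).
\end{equation*}
Evaluate at $z=R$ and use $x_u=1$. For $v\neq u$ the right-hand side equals $R(Ax)_v$, and for $v=u$ it equals $F_{uu}(R)=1=x_u$. Thus $Ax=R^{-1}x$ in every coordinate. The bounded number of outgoing edges makes every sum $(Ax)_v$ finite, so no convergence issues arise.

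For uniqueness, let $y$ be any $R$-subinvariant vector. First, $y$ is strictly positive. Indeed, if $y_w>0$ and $v\to\cdots\to w$ is a path of length $k$, then iterating $y\ge RAy$ gives $y_v\ge R^ky_w>0$; strong connectivity then spreads positivity to every vertex. Normalize so that $y_u=1$. Now iterate the inequality $y_v\ge R a_{vu}y_u+R\sum_{w\neq u}a_{vw}y_w$ $n$ times, substituting again into every term with $w\neq u$. The terms ending at $u$ sum to the partial sums of $F_{vu}(R)$ up to length $n$, and the discarded remainder is nonnegative. Letting $n\to\infty$ gives $y\ge x$.

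Then $z=y-x\ge0$ is $R$-subinvariant, because $x$ is $R$-invariant, and $z_u=0$. By the positivity argument above, any coordinate $z_w>0$ would force $z_u\ge R^kz_w>0$ along a path from $u$ to $w$, which is a contradiction. Hence $z\equiv0$ and $y=x$. Therefore every $R$-subinvariant vector is a positive multiple of $x$, and that vector is $R$-invariant.
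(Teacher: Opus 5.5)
Your proof is correct and follows the standard argument behind the cited result (Seneta, Theorem 6.2, after Vere-Jones); the paper only quotes that result and gives no proof of its own. As in that argument, you take the $R$-invariant vector $x_v=F_{vu}(R)$ from first-passage series, and obtain uniqueness from the minimality $y\ge x$ together with the positivity lemma applied to $y-x$. You only use $F_{uu}(R)=1$ (recurrence) and $R\ge 1/D>0$, where $D$ bounds the out-degree, so your argument in fact proves the $R$-recurrent version. That stronger version is what justifies the paper's parenthetical claim that $G_{\hat{f}_{\textnormal{NME}}}$ is transient.
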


\begin{definition}\label{def:tuv}
    Let $G$ be a strongly connected oriented graph and $W\subset G$. Define $t_{uv}^W(n)$ as the number of paths $(u=u_0,\dots,u_n=v)$ such that $u_k\in V(W)$ for $k=1,\dots,n-1$.
\end{definition}
\begin{theorem}[{\cite[Theorem 3.2]{Ruette}}]\label{SPR_high_local_entropy}
    Let $G$ be a strongly connected oriented graph of finite positive entropy. Assume that there exists a finite strongly connected subgraph $W\subset G$ such that for all vertices $u, v\in V(W)$ we have that 
    \begin{equation*}
        \limsup_{n\to\infty}\frac{1}{n}\log t_{uv}^{W^c}(n)\leq h(W).
    \end{equation*}
    Then $G$ is positive recurrent.
\end{theorem}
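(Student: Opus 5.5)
The idea is to reduce the infinite graph $G$ to a finite matrix-valued generating function indexed by $V(W)$. Then Perron--Frobenius theory on that finite matrix shows that $R(G)<L_{uu}$ for some vertex $u\in V(W)$, and Theorem~\ref{Salama_R<L} gives positive recurrence. If $G=W$ the graph is finite and there is nothing to prove, so I assume $W$ is a proper subgraph. I write $R_W=e^{-h(W)}$; since $W$ is finite and strongly connected, $R_W$ is the reciprocal of the Perron eigenvalue of its adjacency matrix $A_W$.

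\textbf{Setup.} For $v,w\in V(W)$ and complex $z$, put
\begin{equation*}
Q_{vw}(z)=a_{vw}z+\sum_{n\ge 2}t^{W^c}_{vw}(n)z^n .
\end{equation*}
The first term counts a single step inside $W$. The sum counts excursions that leave $W$ and return to it exactly at the end; with $W^c$ as in Definition~\ref{def:tuv}, all intermediate vertices lie outside $W$. By hypothesis each series $\sum_n t^{W^c}_{vw}(n)z^n$ has radius of convergence at least $R_W$. Hence $Q(z)$ is a finite nonnegative matrix, analytic on $|z|<R_W$, and its entries are nondecreasing in $r=z\in(0,R_W)$.

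Every path in $G$ between two vertices of $W$ decomposes uniquely into such blocks. This gives
\begin{equation*}
\sum_n p^G_{vw}(n)z^n=\big((I-Q(z))^{-1}\big)_{vw}
\end{equation*}
whenever the Neumann series converges, i.e.\ when $\rho(Q(|z|))<1$. Fix $u\in V(W)$ and let $W'=V(W)\setminus\{u\}$. Splitting first-return loops at $u$ into blocks in the same way yields
\begin{equation*}
F_u(z)=\sum_n f^G_{uu}(n)z^n=Q_{uu}(z)+Q_{uW'}(z)\big(I-Q_{W'W'}(z)\big)^{-1}Q_{W'u}(z).
\end{equation*}

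\textbf{Step 1: locate $R(G)$ strictly below $R_W$.} Consider $\phi(r)=\rho(Q(r))$ on $(0,R_W)$; it is continuous and nondecreasing. Since $G$ is strongly connected and $W\neq G$, some excursion term $t^{W^c}_{vw}(n)$ is positive. Hence $Q(r)\ge rA_W$, with strict inequality in at least one entry. Because $A_W$ is irreducible, Perron--Frobenius gives $\phi(r)>\rho(rA_W)=r/R_W$, and I want to conclude that $\phi(r_0)=1$ for some $r_0<R_W$. If the excursion series are finite at $R_W$, then $Q(R_W)$ is finite and $\phi(R_W)>1$. Otherwise $\phi(r)\to\infty$ as $r\to R_W^-$. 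In either case continuity gives such an $r_0$. At $r_0$ the matrix $(I-Q(r_0))^{-1}$ blows up, so $\sum_n p^G_{uu}(n)r_0^n=\infty$. Together with convergence for $r<r_0$, this shows $R(G)=r_0<R_W$, using Theorem~\ref{different_uv_same_limit_entropy}.

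\textbf{Step 2: $F_u$ converges beyond $R(G)$.} $Q_{W'W'}(r)$ is a principal submatrix of $Q(r)$, and $Q(r)$ is irreducible because $A_W$ is. Strict monotonicity of the Perron root under passage to proper principal submatrices of irreducible matrices gives $\rho(Q_{W'W'}(r_0))<\rho(Q(r_0))=1$. By continuity, $\rho(Q_{W'W'}(r))<1$ for all $r$ in some interval $(r_0,r_0+\varepsilon)$ with $r_0+\varepsilon<R_W$. On that interval every term in the formula for $F_u(r)$ is finite. Hence $L_{uu}\ge r_0+\varepsilon>R(G)$, and Theorem~\ref{Salama_R<L} shows that $G$ is positive recurrent.

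\textbf{Main obstacle.} The delicate point is the justification of the two generating-function identities. This means the unique block decomposition, and the Neumann-series and Perron--Frobenius arguments for matrices whose entries are power series that may diverge exactly at $R_W$. I would handle this by working only with real $r<R_W$, where all entries are finite, and by treating the case where the excursion series diverge at $R_W$ through the limit $\phi(r)\to\infty$, as in Step 1.
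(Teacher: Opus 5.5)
This statement has no proof in the paper. It is quoted from Ruette, who attributes the criterion to Gurevich and Zargaryan, and is used only as a black box in the last step of the proof of Theorem~\ref{Markovian_approximation}. So your proposal has to stand on its own. It does: it is a correct, self-contained proof by a first-return (``rome'') reduction.

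\textbf{Why the argument works.} The reduction is exact. Every path in $G$ between vertices of $W$ splits uniquely at its visits to $V(W)$. Since $t^{W^c}_{vw}(1)=a_{vw}$, you simply have $Q_{vw}(r)=\sum_{n\ge 1}t^{W^c}_{vw}(n)r^n$. The hypothesis therefore says exactly that the finite irreducible matrix $Q(r)$ is finite on $(0,R_W)$.
\begin{itemize}
\item Step~1 is in effect the strict inequality $h(G)>h(W)$, obtained from finite Perron--Frobenius. It converts the hypothesis into ``excursions grow strictly slower than $h(G)$''.
\item Step~2 is the usual first-return formula, combined with the strict drop of the Perron root on the proper principal submatrix $Q_{W'W'}(r_0)$.
\end{itemize}

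\textbf{What your route gives beyond the citation.} You use only finite Perron--Frobenius together with Theorems~\ref{different_uv_same_limit_entropy} and~\ref{Salama_R<L}. Since Step~2 works for every $u\in V(W)$, you obtain $R(G)<L_{uu}$, hence strong positive recurrence by Theorem~\ref{Ruette_subgraphs_SPR}. This is more than the statement asserts. It is also exactly what the introduction claims for the graph of $\hat{f}_{\textnormal{MME}}$, whereas the body of the paper only extracts positive recurrence from this theorem.

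\textbf{Routine points to write out.}
\begin{itemize}
\item When some excursion series diverges at $R_W$, the claim $\phi(r)\to\infty$ needs irreducibility. If $Q_{vw}(r)\to\infty$ and $w$ reaches $v$ inside $W$ in $m$ steps, then $\phi(r)^{m+1}\ge (Q(r)^{m+1})_{vv}\ge r^m Q_{vw}(r)$.
\item The intermediate value argument also needs $\phi(r)\to 0$ as $r\to 0^+$. This holds because $Q(r)=O(r)$.
\item At $r_0$, do not say that $(I-Q(r_0))^{-1}$ ``blows up''. Instead, note that $\sum_n p^G_{uu}(n)r^n=\sum_k (Q(r)^k)_{uu}$ holds in $[0,\infty]$ by Tonelli. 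Then $\sum_k (Q(r_0)^k)_{uu}=\infty$ because $Q(r_0)$ is irreducible with spectral radius $1$: pair $Q(r_0)^k$ with positive left and right Perron vectors, then use irreducibility to pass to the diagonal entry at $u$.
\end{itemize}

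\textbf{Where the hypotheses on $h(G)$ enter.}
\begin{itemize}
\item Finiteness of $h(G)$ is what guarantees $t^{W^c}_{vw}(n)<\infty$ for every $n$. The $\limsup$ condition only controls the tail.
\item Positivity is needed only in the degenerate case of a one-vertex $W$ without a loop. There $A_W=0$, so $\rho(rA_W)=r/R_W$ is unavailable. Instead use $F_u=Q_{uu}$ and $L_{uu}\ge R_W\ge 1>e^{-h(G)}=R(G)$.
\end{itemize}
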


\subsection{One-sided topological Markov chains and measures of maximal entropy}\label{one-sided}

The material in this subsection is standard, but the precise one-sided form needed later is not easy to quote directly from the literature. We therefore include a self-contained formulation. One may find related comments in e.g. \cite[Section 2.2]{Ruette_natural_extension}. This section consists of a single theorem and a corollary that follows directly from it. Since this framework is meant to be used as a black box, we present the necessary notions and invoke appropriate theorems only inside the proof.

\begin{theorem}\label{one-sided-two-sided}
    Let $(\Gamma_{G_f}^+, \sigma)$ be a one-sided topological Markov chain, i.e. 
    \begin{equation*}
    \Gamma^+_{G_f}=\{(v_n)_{n\in \mathbb{N}_1}\colon v_n\to v_{n+1}\textnormal{ in }G_f\}\subset (V(G_f))^{\mathbb{N}_1}
\end{equation*}
and $\sigma$ is the shift map. Assume that $(\overline{\Gamma}_{G_f},\sigma)$ has finite topological entropy.
Then there exists a bijection $\mu\mapsto\mu_+$ between invariant measures $\mu\in\mathcal{M}(\Gamma_{G_f})$ on the two-sided topological Markov chain $\Gamma_{G_f}$ and invariant measures $\mu_+\in\mathcal{M}(\Gamma^+_{G_f})$ on the one-sided topological Markov chain $\Gamma^+_{G_f}$. Moreover, this bijection preserves the measure-theoretic entropy.
\end{theorem}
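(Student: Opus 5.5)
The plan is to realise the bijection through the natural projection and the natural extension. Let $\pi\colon\Gamma_{G_f}\to\Gamma^+_{G_f}$ be $\pi((v_n)_{n\in\mathbb{Z}})=(v_n)_{n\in\mathbb{N}_1}$. It is continuous and intertwines the shifts, and it is surjective because the graph is strongly connected. Indeed, every vertex has an incoming arrow, so any one-sided path can be extended to the left indefinitely. We set $\mu_+=\pi_*\mu$. Invariance of $\mu_+$ follows from $\pi\circ\sigma=\sigma\circ\pi$.

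\textbf{Injectivity.} Suppose $\pi_*\mu=\pi_*\nu$. Every cylinder $[v_{-k},\dots,v_m]$ in $\Gamma_{G_f}$ equals $\sigma^{k+1}$ applied to a cylinder that starts at coordinate $1$. Its $\mu$-measure is therefore the $\mu_+$-measure of the corresponding one-sided cylinder. So $\mu$ and $\nu$ agree on all cylinders, which form a $\pi$-system generating the Borel $\sigma$-algebra. Hence $\mu=\nu$.

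\textbf{Surjectivity.} Given $\mu_+$, we define $\mu$ on two-sided cylinders by
\[\mu([v_{-k},\dots,v_m]_{-k}^{m})=\mu_+([v_{-k},\dots,v_m]_1^{m+k+1}).\]
Shift invariance of $\mu_+$ makes this family consistent. Kolmogorov's extension theorem then gives a $\sigma$-invariant probability measure on $V(G_f)^{\mathbb{Z}}$. It is supported on $\Gamma_{G_f}$, because every forbidden transition lies in a cylinder of $\mu_+$-measure zero. In the other direction, this is exactly the measure-theoretic natural extension of $(\Gamma^+_{G_f},\sigma,\mu_+)$. The one-sided system is a factor, and the two-sided system is the inverse limit $\varprojlim(\Gamma^+,\sigma)$ realised concretely as bi-infinite paths.

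\textbf{Entropy.} The plan is to use that the natural extension preserves entropy, via the Kolmogorov--Sinai theorem. Let $\xi$ be the partition of $\Gamma_{G_f}$ by the symbol at coordinate $1$. It is countable, and $\bigvee_{n\in\mathbb{Z}}\sigma^{n}\xi$ generates the Borel $\sigma$-algebra, so $\xi$ is a two-sided generator. Let $\xi_+$ be the analogous partition of $\Gamma^+_{G_f}$; it is a one-sided generator. Since $\xi=\pi^{-1}\xi_+$, we have $h_\mu(\sigma,\xi)=h_{\mu_+}(\sigma,\xi_+)$, because both are limits of $\frac1n H(\bigvee_{i<n}\sigma^{-i}\cdot)$ computed on the same cylinder weights.

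\textbf{Main obstacle.} The partition $\xi$ is countable and may have infinite static entropy $H(\xi)$. The Kolmogorov--Sinai generator theorem, and hence the identification $h_\mu(\sigma)=h_\mu(\sigma,\xi)$, is only clean when $H(\xi)<\infty$. This is where the hypothesis $\htop(\overline{\Gamma}_{G_f},\sigma)<\infty$ enters. To handle it, I would avoid $H(\xi)<\infty$ altogether. The inequality $h_{\mu_+}\le h_\mu$ holds because $\mu_+$ is a factor of $\mu$. For the reverse inequality, use Abramov--Rokhlin: the entropy of the natural extension equals the entropy of the factor, since the full two-sided $\sigma$-algebra is the increasing union $\bigvee_k\sigma^{k}\pi^{-1}\mathcal{B}^+$ and entropy is continuous along increasing sequences of invariant sub-$\sigma$-algebras. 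This approach needs no finiteness of $H(\xi)$. The finite-entropy assumption, through Theorem~\ref{entropia_grafu_domkniecie_shiftu}, then guarantees that all the entropies involved are finite. So equality of these real numbers is meaningful, and the same bijection restricts to ergodic measures and to measures of maximal entropy.
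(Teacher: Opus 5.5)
Your proposal is correct. For the entropy equality it takes a genuinely different route from the paper.

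\textbf{Measure correspondence.} This part is the same idea in different clothing. The paper builds the natural extension abstractly as $\varprojlim(\Gamma^+_{G_f},\sigma)$, obtains $\tilde\mu$ from a consistency theorem (Parthasarathy), and transports it to $\Gamma_{G_f}$ via $\tilde x\mapsto(\tilde x_n(0))_{n\in\mathbb{Z}}$. You work with the coordinate projection $\pi$ and cylinders directly. Since you only ever take preimages under $\pi$, you do not need the paper's measurable-selection step (Arsenin--Kunugui).

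\textbf{Entropy.} The paper takes a finite-entropy generator of the one-sided system from Krieger's theorem, pulls it back to a two-sided generator, and concludes with Sinai's theorem. That is where it spends the hypothesis $\htop(\overline{\Gamma}_{G_f},\sigma)<\infty$. It is also the delicate step: Krieger's theorem concerns invertible ergodic systems, whereas what is needed is a finite-entropy \emph{strong} generator for a non-invertible, possibly non-ergodic system. The natural candidate, the first-coordinate partition, may have infinite static entropy, which is exactly the obstacle you flagged.

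You bypass generators altogether. First, $h_{\mu_+}\le h_\mu$ because $\pi$ is a factor map. Conversely, the $\sigma$-algebras $\mathcal{C}_k=\sigma^{k}\pi^{-1}\mathcal{B}^+$ (generated by the coordinates $\ge 1-k$) have three properties:
\begin{itemize}
\item they satisfy $\sigma^{-1}\mathcal{C}_k\subseteq\mathcal{C}_k$;
\item they increase to the Borel $\sigma$-algebra;
\item each is a copy of the one-sided factor via $\pi\circ\sigma^{-k}$.
\end{itemize}
So for a finite partition $P$ and $\varepsilon>0$ there are $k$ and a finite $\mathcal{C}_k$-measurable $Q$ with $H_\mu(P\mid Q)<\varepsilon$. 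Hence $h_\mu(\sigma,P)\le h_\mu(\sigma,Q)+H_\mu(P\mid Q)\le h_{\mu_+}(\sigma)+\varepsilon$.

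This works in $[0,\infty]$ and for non-ergodic measures. So your argument shows the finiteness hypothesis is not needed for the theorem itself; it only matters later, to make the entropies finite. The paper's route is shorter once its black boxes are granted; yours is more elementary and more general.

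Two small remarks:
\begin{itemize}
\item What you call Abramov--Rokhlin is really this continuity of entropy along an increasing generating sequence of invariant sub-$\sigma$-algebras. The Abramov--Rokhlin formula is about skew products.
\item Your closing claim that the bijection restricts to ergodic measures is true but not argued in your text. It is not part of the statement, though the paper relies on it later.
\end{itemize}
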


\begin{proof}
The proof relies heavily on the notion of the natural extension for a one-sided topological Markov chain. An \emph{inverse limit} of a system $(\Gamma_{G_f}^+, \sigma)$ is the space
    \begin{equation*}
        \varprojlim\:(\Gamma_{G_f}^+, \sigma)_{n\in\mathbb{Z}}=\left\{
            \Tilde{x}\in(\Gamma^+_{G_f})^\mathbb{Z}\colon \sigma(\Tilde{x}_n)=\Tilde{x}_{n+1}
        \right\},
    \end{equation*}
which we endow with a product topology, i.e. the smallest topology such that all projections $\pi_{m}\colon \varprojlim\:(\Gamma_{G_f}^+, \sigma)_{n\in\mathbb{Z}}\to (\Gamma^+_{G_f}, \sigma)_m$ are continuous. Followingly, we define a \emph{natural extension} of the system $(\Gamma_{G_f}^+, \sigma)$ as the inverse limit $\varprojlim\:(\Gamma_{G_f}^+, \sigma)_{n\in\mathbb{Z}}$ together with the Borel sigma algebra generated by the induced topology and a standard shift map denoted by $\Tilde{\sigma}$.
    
\begin{remarknn}\label{commutative_shifts}
    Note that a natural extension is an extension if both the extended system and its natural extension are topological dynamical systems. The definition, as it has been presented, implies that each projection $\pi_m$ is a factor map, provided that both underlying spaces are compact.
\end{remarknn}

It is well-known that there exists a unique measure $\Tilde{\mu}$ on the natural extension $(\varprojlim\:(\Gamma_{G_f}^+, \sigma)_{n\in\mathbb{Z}}, \Tilde{\sigma})$ such that for all $m\in\mathbb{Z}$ we have $\Tilde{\mu}(\pi_m^{-1}(A))=\mu_+(A)$ for every measurable set $A\subset\Gamma^+_{G_f}$ (see e.g. {\cite[Chapter V.3, Theorem 3.2]{Parthasarathy}}). One can see that the measure $\Tilde{\mu}$ preserves invariance, i.e. $\mu_+$ is $\sigma$-invariant if and only if $\Tilde{\mu}$ is $\Tilde{\sigma}$-invariant. Indeed, this is rather a straightforward argument in view of Remark above, provided that the projections of measurable sets via maps $\pi_m$ are measurable. The latter assertion follows from the Arsenin-Kunungui Theorem (see e.g. \cite[Theorem 18.18]{Kechris}) in conjunction with Lemma \ref{finite_in_out}.
    
It is well-known that an association $\phi(\Tilde{x})=(\Tilde{x}_n(0))_{n\in\mathbb{Z}}$ defines a natural homeomorphism $\phi\colon\varprojlim\:(\Gamma_{G_f}^+, \sigma)_{n\in\mathbb{Z}}\to\Gamma_{G_f}$. Indeed, this follows from the fact that each coordinate in $\Tilde{x}$ is a valid path in one-sided topological Markov chain $\Gamma_{G_f}^+$. Therefore, there exists a natural bijection $\mu=\Tilde{\mu}(\phi^{-1})=\phi_*\Tilde{\mu}$ which preserves invariance, ergodicity and measure-theoretic entropy. Let us then examine the measure-theoretic entropies $h_{\mu_+}(\Gamma^+_{G_f}, \sigma)$ and $h_\mu(\Gamma_{G_f}, \sigma)$. We will do so with respect to the generating partitions (also called generators) for these dynamical systems.

For two partitions $\alpha, \beta$ for a system $(X, f)$ we denote $$\alpha\vee\beta=\{A\cap B\colon A\in\alpha, B\in\beta\}$$ and consequently for $n\in\mathbb{N}$ we denote $$\bigvee_{i=0}^n f^{-i}(\alpha)=\{A_0\cap f^{-1}(A_1)\cap\dots\cap f^{-n}(A_n)\colon A_0,\dots,A_n\in\alpha\}.$$

A countable measurable partition $\alpha$ is a \emph{generator (strong generator)} for an invertible (possibly noninvertible) measurable dynamical system $(X, B, f, \mu)$ if we have that $\bigvee_{n=k}^\infty f^{-n}(\alpha)=B$, where $\bigvee_{n=k}^\infty f^{-n}(\alpha)$ is the smallest sigma algebra containing $\bigcup_{n=k}^\infty f^{-n}(\alpha)$ with $k=-\infty$ ($k=0$). In such setting we define the entropy of a countable partition $\alpha$ as $H_\mu(\alpha)=\sum_{A\in\alpha}-\mu(A)\log\mu(A)$. Consequently, the measure-theoretic entropy of $(X, B, f, \mu)$ with respect to partition $\alpha$ is 
$$
h_\mu(\alpha, f)=\lim_{n\to\infty}\frac{1}{n}H_\mu(\bigvee_{i=0}^{n-1}f^{-i}(\alpha)).
$$

Observe that the topological dynamical system $(\overline{\Gamma}^+_{G_f}, \sigma)$, as a factor of $(\overline{\Gamma}_{G_f}, \sigma)$ whose entropy $\htop (\overline{\Gamma}_{G_f}, \sigma)<\infty$, necessarily has finite topological entropy. Hence, $\mu$ must have finite entropy. To add to that, $(\Gamma^+_{G_f}, \mathcal{B}(\Gamma^+_{G_f}), \sigma, \mu)$ is a standard probability space, since $\Gamma^+_{G_f}$ is a Polish space. Thus, there exists a (by default strong) finite entropy generator $\alpha$ by Krieger's Generator Theorem (see e.g. {\cite[Section 5.3, Theorem 3.2]{Petersen}}). Moreover, for any $m\in\mathbb{Z}$ the preimage $\phi\circ\pi_m^{-1}(\alpha)=\beta_m$ is a generator for the two-sided topological Markov chain $(\Gamma_{G_f}, \sigma)$, as $\Gamma_{G_f}$ is also Polish. One can see that the entropies of $\alpha, \beta_m$ match, as the measure $\tilde{\mu}$ acts as a ``measure preserving link" between $\mu_+$ and $\mu$. Therefore, $h_{\mu_+}(\Gamma^+_{G_f}, \sigma)=h_\mu(\Gamma_{G_f}, \sigma)$ by Sinai's Generator Theorem (see e.g. {\cite[Section 5.3, Theorem 3.1]{Petersen}}). \qedhere
\end{proof}

By Theorem~\ref{one-sided-two-sided}  there exists a bijective association $\mu_+\mapsto\mu$, in other words, each measure $\mu_+$ uniquely defines the measure $\mu$ and vice versa. This association preserves shift invariance and measure-theoretic entropy. Hence, these systems must have the same number of measures of maximal entropy.

\begin{corollary}\label{one_sided_shift_graph_VP}
    Let $G_f$ be a Markovian graph with finite positive entropy and $(\Gamma^+_{G_f}, \sigma)$ be a one-sided topological Markov chain defined by $G_f$. Then 
    \begin{equation*}
    h(G_f)=\htop(\overline{\Gamma}^+_{G_f}, \sigma)=\sup\{h_{\mu_+}(\Gamma^+_{G_f}, \sigma)\colon\mu_+\in\mathcal{M}_\sigma(\Gamma^+_{G_f})\}
    \end{equation*}
    and $(\Gamma^+_{G_f}, \sigma)$ admits a unique measure of maximal entropy if and only if the graph $G_f$ is positive recurrent.
\end{corollary}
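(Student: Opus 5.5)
The plan is to transfer everything from the one-sided chain to the two-sided chain $(\Gamma_{G_f},\sigma)$ via Theorem~\ref{one-sided-two-sided}, and then quote the two-sided results of Subsection~\ref{Markov_graph_theory}. The hypothesis $h(G_f)<\infty$ makes Theorem~\ref{one-sided-two-sided} applicable: by Theorem~\ref{entropia_grafu_domkniecie_shiftu} we have $\htop(\overline{\Gamma}_{G_f},\sigma)=h(G_f)<\infty$.

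\emph{First equality.} The coordinate projection $(v_n)_{n\in\mathbb{Z}}\mapsto (v_n)_{n\in\mathbb{N}_1}$ extends to a continuous surjection $\overline{\Gamma}_{G_f}\to\overline{\Gamma}^+_{G_f}$ commuting with the shifts. By Theorem~\ref{entropy_properties}(1) this gives $\htop(\overline{\Gamma}^+_{G_f},\sigma)\le h(G_f)$. For the reverse inequality, take a finite subgraph $H\subset G_f$. Then $\Gamma^+_H$ is a compact subsystem of $\overline{\Gamma}^+_{G_f}$, and by the standard finite-alphabet fact, $\htop(\Gamma^+_H,\sigma)=\htop(\Gamma_H,\sigma)$. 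Both are determined by the growth of the number of admissible words. Taking the supremum over $H$ gives $h(G_f)\le\htop(\overline{\Gamma}^+_{G_f},\sigma)$.

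\emph{Second equality.} Theorem~\ref{one-sided-two-sided} gives an entropy-preserving bijection $\mu\mapsto\mu_+$ between $\mathcal{M}_\sigma(\Gamma_{G_f})$ and $\mathcal{M}_\sigma(\Gamma^+_{G_f})$. Hence the two suprema coincide, and by Theorem~\ref{entropia_grafu_domkniecie_shiftu} the supremum over the two-sided measures equals $h(G_f)$.

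\emph{MME statement.} We must check that the bijection preserves ergodicity. In the proof of Theorem~\ref{one-sided-two-sided}, $\mu$ is the pushforward $\phi_*\tilde\mu$ of the natural-extension measure, and $\tilde\mu$ is ergodic iff $\mu_+$ is. Indeed, an invariant set for $\tilde\sigma$ is, modulo null sets, measurable with respect to the future of coordinate $m$ for every $m$, and ergodicity passes to factors. Since entropies agree, ergodic maximal-entropy measures correspond bijectively, so the number of MMEs is the same for both chains. Now $G_f$ is strongly connected (we are in the transitive/mixing setting, as stated at the start of the section) and has finite positive entropy. So Theorem~\ref{MME_existence} applies to $(\Gamma_{G_f},\sigma)$: it has an ergodic MME iff $G_f$ is positive recurrent, and then that MME is unique. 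Transferring through the bijection gives the claim.

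The main obstacle I expect is the ergodicity-preservation step, which the statement of Theorem~\ref{one-sided-two-sided} does not explicitly provide. I would argue it directly. If $\mu_+=t\nu_1+(1-t)\nu_2$ with $\nu_1,\nu_2$ invariant, the uniqueness of natural-extension measures gives $\tilde\mu=t\tilde\nu_1+(1-t)\tilde\nu_2$, and conversely. So extremality, which is equivalent to ergodicity, is preserved in both directions. A secondary point is strong connectivity of $G_f$, which I will take from the transitivity assumed throughout this section.
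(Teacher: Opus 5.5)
Your proposal is correct and follows the paper's proof. The paper's proof simply combines the entropy-preserving correspondence of Theorem~\ref{one-sided-two-sided} with Theorems~\ref{entropia_grafu_domkniecie_shiftu} and~\ref{MME_existence}, and your extra checks make explicit what it leaves implicit: that the correspondence preserves ergodicity (your extremality argument is the right one), that $G_f$ is strongly connected by the section's standing mixing assumption, and the factor-plus-finite-subgraph argument for $\htop(\overline{\Gamma}^+_{G_f},\sigma)=h(G_f)$, where surjectivity of the projection uses that every vertex of a strongly connected graph has a predecessor.
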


\begin{proof}
    The assertion follows by combining the entropy preserving correspondence from Theorem \ref{one-sided-two-sided} with the countable graph criteria in Theorems \ref{entropia_grafu_domkniecie_shiftu} and \ref{MME_existence}.
\end{proof}

\begin{remark}
    Note that, following \cite{Ruette} we defined the topology on $\Gamma_G$ as induced by the discrete topology on $V(G)$. We silently extended this setting to $\Gamma_G^+$. However, one may always consider a compatible metric on $\Gamma_G^+$ e.g. $d(x, y)=2^{-\min\{i\in\mathbb{N}\colon x_i\neq y_i\}}$. For simplicity in future considerations, we will assume that the space $\Gamma_G^+$ comes with the metric $d$.
\end{remark}

\subsection{Dynamical properties of Markovian TDS}\label{MMEs_assertions}

Recall that in this section we assume that $(\G, f)$ is a mixing Markovian dynamical system and $G_f$ is the Markovian graph of $f$. Moreover, in $G_f$ each vertex $v_{[p, q]}$ is associated to a basic interval $[p, q]$ and each directed edge is based on the covering relation between basic intervals. Assume that the set $P_f\subset G$ is such that $f$ is $P_f$-Markovian.

\begin{lemma}\label{diameters_to_zero}
    For each $(v_{[p_n, q_n]})_{n=1}^\infty\subset \Gamma^+_{G_f}$ there exists a unique point $x\in \Reg(\G)$ such that $f^{n-1}(x)\in [p_n, q_n]$ for all $n\in\mathbb{N}_1$.
\end{lemma}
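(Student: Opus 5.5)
The plan is to obtain existence from a nested-compactness argument and uniqueness from the mixing assumption, which is in force throughout this subsection.

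\textbf{Existence.} For $n\in\mathbb{N}_1$ I would set
\begin{equation*}
K_n=\{x\in[p_1,q_1]\colon f^{k-1}(x)\in[p_k,q_k]\textnormal{ for } k=1,\dots,n\}.
\end{equation*}
Each $K_n$ is closed, and $K_{n+1}\subset K_n$. To see that $K_n\neq\emptyset$ I would prove, by induction on $n$, that $f^{n-1}(K_n)=[p_n,q_n]$.
\begin{itemize}
\item For $n=1$ this holds trivially.
\item Assume $f^{n-1}(K_n)=[p_n,q_n]$. Since $v_{[p_n,q_n]}\to v_{[p_{n+1},q_{n+1}]}$, we have $[p_{n+1},q_{n+1}]\subset f([p_n,q_n])$.
\item Hence every $y\in[p_{n+1},q_{n+1}]$ has a preimage $z\in[p_n,q_n]$, and $z=f^{n-1}(x)$ for some $x\in K_n$. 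Then $x\in K_{n+1}$ and $f^{n}(x)=y$.
\end{itemize}
By compactness of $[p_1,q_1]$, the intersection $\bigcap_n K_n$ is nonempty. Any $x$ in it satisfies $f^{n-1}(x)\in[p_n,q_n]$ for all $n$. Moreover $x\in[p_1,q_1]\subset\Reg(\G)$, because basic intervals are closures of components of $\G\setminus P$ and have endpoints in $P\subset\G\setminus\End(\G)$.

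\textbf{Reducing uniqueness to a nondegenerate arc.} Each $K_n$ is in fact an arc (possibly degenerate). This follows inductively, since $f$ restricted to a basic interval is linear and hence monotone, so the preimage of an arc inside that basic interval is again an arc. Consequently $\bigcap_n K_n$ is an arc $J=[a,b]$. Uniqueness therefore amounts to ruling out $a\neq b$. If $J$ is nondegenerate, then by construction
\begin{equation*}
f^{n-1}(J)\subset[p_n,q_n]\quad\textnormal{for every } n\in\mathbb{N}_1.
\end{equation*}
That is, every iterate of $J$ stays inside a single basic interval.

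\textbf{Uniqueness via mixing (the main step).} I would pick two distinct basic intervals $I_1,I_2$ and let $V_i=\Int(I_i)$. These are open in $\G$, because all branch points lie in $P$. Distinct basic intervals meet in at most one point of $P$, so a single basic interval cannot meet both $V_1$ and $V_2$. Set $U=(a,b)$, which is nonempty and open in $\G$ since $J\subset[p_1,q_1]$ is a free arc and interior points of it are not branch points.
\begin{itemize}
\item Mixing applied to $(U,V_1)$ and to $(U,V_2)$ gives some $n$ with $f^{n}(U)\cap V_1\neq\emptyset$ and $f^{n}(U)\cap V_2\neq\emptyset$ simultaneously.
\item But $f^{n}(U)\subset[p_{n+1},q_{n+1}]$, a single basic interval, which cannot meet both $V_1$ and $V_2$.
\end{itemize}
This contradiction forces $a=b$, so the point $x$ is unique.

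The step needing the most care is this last one. The subtle points are confirming that $U$ and the $V_i$ are genuinely open in $\G$, which uses $B(\G)\subset P$, and that the interiors of distinct basic intervals are disjoint. Transitivity alone would not give the simultaneous intersection, so mixing is exactly what is used here.
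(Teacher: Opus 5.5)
Your proof is correct and takes essentially the paper's approach: nested sets $\bigcap_{n\le k} f^{-n+1}([p_n,q_n])$ that are arcs because $f$ is monotone on basic intervals, with mixing ruling out a nondegenerate limit arc. Your version is slightly more careful than the paper's in two places: you prove nonemptiness explicitly via $f^{n-1}(K_n)=[p_n,q_n]$, and you apply mixing to the open interiors $V_i$ rather than to closed basic intervals, which may share an endpoint.
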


\begin{proof}
    Set $C_k=\bigcap_{n=1}^k f^{-n+1}([p_n, q_n])$. We claim that for every $k\in\mathbb{N}$ the set $C_k$ is connected. We will show it by induction. Since $C_1$ is clearly connected, assume that $C_k$ is connected and let us examine the set $C_{k+1}$. Observe that $I=f^{k-1}(C_k)\subset [p_{k}, q_k]$ and $I$ is connected. If $C_{k+1}$ is not connected, then it follows that $f(I)$ is not a connected subset of $[p_{k+1}, q_{k+1}]$ and hence $f$ is not linear over $[p_k, q_k]$, a contradiction. Thus, each $C_k$ is a closed subinterval of $[p_1, q_1]$.
    
    The sequence $(C_k)_{k=1}^\infty$ is nested. Consider the set $J=\bigcap_{k=1}^\infty C_k$ - a closed subset of $[p_1, q_1]$. If all $C_k$ are connected, then so is $J$. Assume that $\Int(J)\neq\emptyset$. Since $f$ is mixing, for any two basic intervals $[p, q], [r, w]$ there exists $k\in\mathbb{N}$ such that $f^k(J)\cap [p, q]\neq\emptyset$ and $f^{k}(J)\cap[r, w]\neq\emptyset$. On the other hand, by the definition of $C_k$ we have that $f^{n-1}(\Int(J))\subset [p_n, q_n]$ for $n\in\mathbb{N}$, which is a contradiction. Thus, $J=\{x\}$.
\end{proof}

\begin{definition}\label{phi_function}
    Let $\phi\colon (\Gamma^+_{G_f}, \sigma)\to (\G\setminus\End(\G), f|_{\G\setminus\End(\G)})$ be a function that associates a sequence $(v_{[p_n, q_n]})_{n\in\mathbb{N}}\subset \Gamma^+_{G_f}$ with the unique point $x\in \G\setminus\End(\G)$ such that $f^{n-1}(x)\in [p_n, q_n]$ for all $n\in\mathbb{N}_1$. We will call $(v_{[p_n, q_n]})_{n\in\mathbb{N}_1}$ the \emph{encoding} of $x$  and $\phi$ the \emph{decoding function}.
\end{definition}

\begin{lemma}\label{phi_cont_surj}
    The function $\phi$ is a continuous surjection.
\end{lemma}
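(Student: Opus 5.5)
We prove the two properties separately: continuity by the nested-set description from Lemma~\ref{diameters_to_zero}, surjectivity by constructing an encoding for each regular point.

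\textbf{Continuity.} For $x=\phi((v_{[p_n,q_n]})_n)$ we use the sets $C_k=\bigcap_{n=1}^k f^{-n+1}([p_n,q_n])$. By the proof of Lemma~\ref{diameters_to_zero}, they form a nested sequence of arcs with $\bigcap_k C_k=\{x\}$. Compactness then gives $\diam(C_k)\to 0$.

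Let $\varepsilon>0$ and choose $k$ with $\diam(C_k)<\varepsilon$. Any sequence $y\in\Gamma^+_{G_f}$ with $d(y,v)<2^{-k}$ agrees with $v$ on the first $k$ coordinates. Hence $\phi(y)$ lies in the same set $C_k$, so $d_{\G}(\phi(y),\phi(v))<\varepsilon$. This gives uniform continuity on each cylinder. Since the metric on $\Gamma^+_{G_f}$ is the $2^{-\min}$ metric, this is exactly continuity.

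A minor point is that $C_k$ depends only on the word $v_1\dots v_k$. So the estimate is uniform over the cylinder, although not over the whole space, since there are countably many cylinders. Local uniformity is enough.

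\textbf{Surjectivity.} Fix $x\in\Reg(\G)$. We build the encoding inductively, which is the main obstacle: we must choose basic intervals $J_n\ni f^{n-1}(x)$ with $J_n\to J_{n+1}$ in $G_f$.

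If the orbit of $x$ avoids $P$, each $f^{n-1}(x)$ lies in the interior of a unique basic interval $J_n$. Because $f$ is linear on $J_n$ and $f(P)\subset P$, the image $f(J_n)$ is an arc with endpoints in $P$, so it is a union of basic intervals. The one containing $f^n(x)$ is then $J_{n+1}$, and $J_{n+1}\subset f(J_n)$ gives the edge $J_n\to J_{n+1}$.

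If $f^{m}(x)\in P$ for some $m$, the choice is more delicate, since $f(J)$ might be degenerate. We handle this as follows.
\begin{enumerate}
\item First, note that $f$ is nonconstant on every basic interval. A constant piece would map an open set to a point, contradicting mixing: the image of a nonempty open set under some iterate must meet two disjoint basic intervals, as in the proof of Lemma~\ref{diameters_to_zero}. So every $f(J)$ is a nondegenerate arc with endpoints in $P$.
\item Second, we use the finiteness of Lemma~\ref{finite_in_out} and a König-type argument. For each $N$ there is a finite admissible path $J_1\to\dots\to J_N$ with $f^{n-1}(x)\in J_n$. To build it, take $J_N$ to be any basic interval containing $f^{N-1}(x)$. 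Then go backwards: given $J_{n+1}\ni f^{n}(x)$, pick $J_n\ni f^{n-1}(x)$ whose image contains $J_{n+1}$.
\end{enumerate}
The backward step is possible because $f^{n-1}(x)$ lies in at most three basic intervals, the branch points having order 3. Monotonicity of linear pieces gives that the union of their images is a neighbourhood of $f^n(x)$ in $\G$. This neighbourhood covers every basic interval adjacent to $f^n(x)$ along at least an initial subarc, and since images are unions of basic intervals, it contains each such interval entirely.

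There are only finitely many choices at each level. König's lemma therefore yields an infinite path $v\in\Gamma^+_{G_f}$ with $f^{n-1}(x)\in J_n$ for all $n$. By the uniqueness in Lemma~\ref{diameters_to_zero}, $\phi(v)=x$.

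The step needing the most care is the local covering claim. It requires checking that $f$ is locally surjective at regular points, i.e.\ that no point maps to a branch point while missing a branch. We expect this to follow from mixing together with the Markov structure. If it fails, we instead take limits of finite paths starting at nearby points whose orbits avoid $P$, using continuity of $f$ and the local finiteness given by Lemma~\ref{finite_in_out}.
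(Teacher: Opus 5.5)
Your continuity argument is the paper's: the nested arcs $C_k$ shrink to $\phi(v)$ by Lemma~\ref{diameters_to_zero}, and agreement on the first $k$ coordinates puts both images in $C_k$. Your item 1 is correct and worth having. Linearity permits slope $0$, and the paper's one-line surjectivity argument never checks that the chosen intervals actually form a path in $G_f$.

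The gap is in item 2. You claim that the images of the basic intervals containing $f^{n-1}(x)$ cover a neighbourhood of $f^n(x)$, and this is false in general. Take a turning point $j\in P$ of an $M$-fold, such as the $S'$-folds used later in the paper. Both adjacent linear pieces map onto the same arc $[x',y']$, with $f(j)\in\{x',y'\}$. So the basic interval on the other side of the regular point $f(j)$ is covered by neither piece. Likewise, a point of order $2$ mapped onto a branch point reaches at most two of its three branches. So $f$ is not locally surjective. The backward step therefore fails when $J_N$ is an arbitrary basic interval containing $f^{N-1}(x)$, and K\"onig's lemma receives no finite paths.

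The repair needs none of this machinery. After item 1, every $f(J)$ is a nondegenerate arc with endpoints in $P\subset\Reg(\G)$. Such an arc lies in some finite tree $T_k$, so it meets only finitely many points of $P$ and is a finite union of basic intervals. The forward construction you already used for $P$-avoiding orbits then works verbatim. Since $f^{n}(x)\in f(J_n)$, some basic interval $J_{n+1}\subset f(J_n)$ contains $f^n(x)$. This gives $J_n\to J_{n+1}$ at every step and an infinite path, which decodes to $x$ by the uniqueness in Lemma~\ref{diameters_to_zero}. Your fallback (approximating $x$ by points whose orbits avoid $P$ and taking a diagonal limit of their codes, using local finiteness) can also be made rigorous, but it is a detour.
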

\begin{proof}
    Since the map $f$ is Markovian, then for any point $x\in\G\setminus \End(\G)$ each of its images $f^{n-1}(x)\in[p_n, q_n]$ for $n\in\mathbb{N}_1$ and appropriate $[p_n, q_n]$. Thus, $\phi$ is surjective. Moreover, it must be continuous, as the critical observation is that for any sequence $(v_{[p_n,q_n]})_{n=1}^\infty$ we have $$\lim_{m\to\infty}\diam \bigcap_{n=1}^m f^{-n+1}([p_n, q_n])=0,$$
    which follows directly from Lemma \ref{diameters_to_zero}. Let $\epsilon>0$ and $n_\epsilon$ be such that the diameter $\diam\bigcap_{n=1}^{n_\epsilon} f^{-n+1}([p_n, q_n])<\epsilon$. Set $v\in\Gamma_{G_f}^+$ to be such that $v_n=v_{[p_n, q_n]}$ for $n=1,\dots,n_\epsilon$. There exists $\delta=2^{-n_\epsilon}$ such that if $d(v, w)<\delta$, then the initial positions $v_i=w_i$ for $n=1,\dots,n_\epsilon$. Hence, $\phi(v), \phi(w)\in \bigcap_{n=1}^{n_\epsilon} f^{-n+1}([p_n, q_n])$ and $d(\phi(v), \phi(w))<\epsilon$.
\end{proof}

\begin{definition}
    Let
    \begin{equation*}
        \mathrm{UEP}=\{x\in\Reg(\G)\colon \phi^{-1}(x)=\{(v_{[p_n, q_n]})_{n=1}^\infty \}\}
    \end{equation*}
    and $\mathrm{NEP}=\G\setminus(\End(\G)\cup \mathrm{UEP})$.
\end{definition}

\begin{lemma}
    Let $x\in\G\setminus\End(\G)$. Then there exist at most three encodings $u, w, v\in\Gamma^+_{G_f}$ such that $\phi(u)=\phi(v)=\phi(w)=x$.
\end{lemma}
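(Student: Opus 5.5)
The plan is to bound the number of encodings by the number of basic intervals that can contain a given point. At the same time, I will show that once the first symbol is fixed, the rest of the encoding is forced. I first handle what an encoding of $x$ can look like. If $\phi(v)=x$ with $v=(v_{[p_n,q_n]})_{n\ge1}$, then $f^{n-1}(x)\in[p_n,q_n]$ for every $n$.

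\textbf{Step 1: how many basic intervals contain a point.} Two distinct basic intervals meet only at points of $P$, because they are closures of distinct components of $\G\setminus P$. If $y\notin P$, then $y$ lies in exactly one basic interval. If $y\in P$, then the basic intervals containing $y$ correspond to the components of $\G\setminus P$ whose closures contain $y$. Since $[b_1,b_2]\cap P$ is finite for adjacent branch points, each such component is an open subarc adjacent to $y$ along one of the branches at $y$. There are at most $\operatorname{ord}(y)$ of them, and $\operatorname{ord}(y)\le 3$ since branch points of $\G$ have order $3$ and regular non-branch points have order $2$. Moreover $P$ is forward invariant by $f(P)\subset P$.

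\textbf{Step 2: the first symbol determines the encoding.} Let $v,w$ be encodings of $x$ with $v_1=w_1=[p_1,q_1]$. I will show $v_n=w_n$ for all $n$ by induction, which is the main point.

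Suppose $v_k=w_k$ for $k\le n$. Set $C=\bigcap_{k=1}^{n}f^{-k+1}([p_k,q_k])$. By the proof of Lemma~\ref{diameters_to_zero}, $C$ is an arc containing $x$, and $f^{n-1}|_C$ is monotone and linear into $[p_n,q_n]$. Mixing rules out a constant piece, so $f$ is injective on each basic interval.

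If $f^{n}(x)\notin P$, the interval $v_{n+1}$ is forced by Step 1. Otherwise $v_{n+1}$ and $w_{n+1}$ are basic intervals at $f^n(x)$, each covered by $f([p_n,q_n])$. Since $f([p_n,q_n])$ is an arc, it meets $f^n(x)$ from at most two directions.

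I expect this to be the main obstacle. The recursion does not literally determine $v_{n+1}$ uniquely, because distinct continuations define distinct cylinder sets. Two different continuations could give the same point only if the nested intersections shrink to $x$ from different sides. So the cleaner route is to count sides at $x$ rather than force uniqueness symbol by symbol.

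\textbf{Step 3: counting by sides.} Each encoding $v$ of $x$ determines a nested sequence of arcs $C_k(v)\ni x$ with $\diam C_k(v)\to0$, by Lemma~\ref{diameters_to_zero}. If some $C_k(v)$ is nondegenerate with $x$ as an endpoint, it determines a germ at $x$, that is, one of the at most three branches at $x$.

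I claim that two encodings with the same germ coincide. Suppose their cylinders $C_k(v)$ and $C_k(w)$ both contain a small arc $[x,x+\epsilon)$ on the same side for each $k$. Then the images $f^{k-1}([x,x+\epsilon'))$ are nondegenerate arcs lying in both $[p_k,q_k]$ and $[p'_k,q'_k]$. Since distinct basic intervals meet in at most one point, $v_k=w_k$.

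If instead $x$ is an interior point of some $C_k$, the same argument applies on both sides. In that case the encoding is unique, so $x\in\mathrm{UEP}$.

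If $C_k(v)=\{x\}$ for some finite $k$, I would argue it is impossible. Cylinders are preimages of basic intervals under linear injective pieces, hence nondegenerate arcs, so every encoding carries a germ. Thus the map from encodings of $x$ to branches at $x$ is injective, giving at most $\operatorname{ord}(x)\le3$ encodings. The delicate part is checking nondegeneracy of the cylinders, namely that injectivity of $f$ on basic intervals follows from mixing together with linearity.
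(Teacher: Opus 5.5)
Your Step 3 is a correct proof, and it follows a different route from the paper's. Delete Step 2, though: its claim is false, not merely hard to prove. If $x\notin P_f$ but $f(x)\in P_f$, then $v_1$ is forced. Yet the two basic intervals of the arc $f(v_1)$ adjoining $f(x)$ start two distinct encodings of $x$ with the same first symbol.

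The same example refutes your sentence that the encoding is unique when $x$ is interior to \emph{some} $C_k$; it should say \emph{every} $C_k$. The case split is not needed anyway. The directions at $x$ (components of $\G\setminus\{x\}$) met by the nested nondegenerate arcs $C_k(v)$ form a decreasing sequence of nonempty sets, each of size at most two. So some direction lies in all $C_k(v)$, and assigning that direction to $v$ is the injection you want.

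Nondegeneracy works as you indicate. First, $f$ has positive slope on every basic interval, since a constant piece would contradict mixing. Then, using $v_{k+1}\subset f(v_k)$, induction shows that $f^{k-1}$ maps $C_k(v)$ homeomorphically onto $v_k$. This is also what makes the image under $f^{k-1}$ of a common initial segment nondegenerate in your germ argument. Together with the fact that distinct basic intervals share at most one point, this gives $v_k=w_k$.

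The paper argues forward along the orbit instead:
\begin{itemize}
\item as long as $f^{n-1}(x)\notin P_f$, the symbol is forced;
\item at the first time the orbit meets $P_f$, there are two or three admissible symbols;
\item afterwards $f^{n-1}(x)$ is an endpoint of the current basic interval, so linearity leaves a unique basic interval of the image adjoining $f^{n-1}(x)$.
\end{itemize}
That approach ties multiplicity of encodings directly to the orbit meeting $P_f$, i.e.\ to the $\mathrm{UEP}$/$\mathrm{NEP}$ split used next. The cost is that the forced-continuation step must be checked at every later time.

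Your approach pulls everything back to $x$ through the homeomorphisms $f^{k-1}|_{C_k(v)}$ and needs no forward bookkeeping. It also gives the sharper bound $\#\phi^{-1}(x)\le\operatorname{ord}(x)$, so at most two encodings for $x\notin B(\G)$. On its own, however, it does not tell you when more than one encoding occurs.
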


\begin{proof}
    Observe that if $x\in\mathrm{UEP}$, then there exists only one $v\in\Gamma^+_{G_f}$ with $\phi(v)=x$, and we have that $f^{n-1}(x)\in(p_n, q_n)$, where $v_{[p_n, q_n]}=v_n$. In other words, the orbit $\mathcal{O}^+(x)\cap P_f=\emptyset$. Indeed, if this was not the case, then there would be $k\in\mathbb{N}_1$ such that $f^{k-1}(x)\in P_f$ would be an endpoint of two basic intervals (or three, if $f^{k-1}(x)\in B(\G)$). Let $[p, f^{k-1}(x)], [f^{k-1}(x), q]$ be those intervals. Then, the encodings $v, w$ given by formula
    \begin{equation*}
        \begin{split}
            &v_n=w_n=v_{[p_n, q_n]},\; f^{n-1}(x)\in(p_n, q_n),\;n=1,\dots,k-1\\
            &v_k=v_{[p, f^{k-1}(x)]},\;w_k=v_{[f^{k-1}(x), q]},\\
            &v_{n}=v_{[p_n, f^{n-1}(x)]}\;w_n=v_{[f^{n-1}(x), q_n]},\; n\geq k+1,
        \end{split}
    \end{equation*}
    satisfy $\phi(v)=\phi(w)=x$, where $p_n, q_n\in P_f$, $n\geq k+1$ are such that $v_n, w_n$ correspond to basic intervals in the images $f([p_{n-1}, f^{n-2}(x)])$ and $f([f^{n-2}(x), q_{n-1}])$, respectively. Since the map $f$ is linear on each of its basic intervals and $f^{k-1}(x)$ is an endpoint of a basic interval, there's always a unique $p_n$ such that $[p_n, f^{n-1}(x)]$ is a basic interval contained in $f([p_{n-1}, f^{n-2}(x)])$. The same holds true for the sequence $(q_n)_{n=k+1}^\infty$. This clearly contradicts the definition of $x\in\mathrm{UEP}$.

    Moreover, if $y\in\mathrm{NEP}$, then by a similar reasoning there exists either exactly two or exactly three encodings $u, v, w\in\Gamma^+_{G_f}$ such that $\phi(u)=\phi(v)=\phi(w)=y$.
\end{proof}

\begin{lemma}\label{DEP_countable}
    The set $\mathrm{NEP}$ is countable.
\end{lemma}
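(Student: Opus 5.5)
The plan is to show $\mathrm{NEP}\subseteq\bigcup_{n\in\mathbb{N}} f^{-n}(P_f)\cap\Reg(\G)$ and then check that the right-hand side is countable.

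\emph{First step: the inclusion.} Take $x\in\mathrm{NEP}$, so $x$ has at least two distinct encodings $v\neq w$. Let $k$ be the first index with $v_k\neq w_k$. Then $f^{k-1}(x)$ lies in two distinct basic intervals, $[p_k,q_k]$ and $[p'_k,q'_k]$. Distinct basic intervals are closures of distinct components of $\G\setminus P_f$, so they meet only in points of $P_f$. Hence $f^{k-1}(x)\in P_f$. This is essentially the contrapositive of the observation in the proof of the preceding lemma that points of $\mathrm{UEP}$ have orbits disjoint from $P_f$.

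\emph{Second step: countability.} The set $P_f$ is countable by definition. It therefore suffices to show that for every $p\in P_f$ and every $n$ the set $f^{-n}(p)\cap\Reg(\G)$ is countable, and by induction it is enough to treat $n=1$. The set $\Reg(\G)$ is the union of the countably many basic intervals. This holds because each edge $[b_\omega,b_{\omega'}]$ contains only finitely many points of $P_f$, and hence is a finite union of basic intervals.

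On a single basic interval $I$, the map $f$ is linear in the sense of Definition~\ref{piecewise_linearity_def}. There are two cases:
\begin{itemize}
\item If the slope $s_f(I)>0$, then $f|_I$ is injective. Indeed, two distinct points of $I$ span an arc $J$ with $\diam f(J)=s\,\diam J>0$. So $f^{-1}(p)\cap I$ has at most one point.
\item If $s_f(I)=0$, then $f(I)$ is a single point.
\end{itemize}

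\emph{Main obstacle: zero-slope intervals.} If $f$ collapses a basic interval onto $p$, the preimage is a whole arc and is uncountable. So I must rule out $s_f(I)=0$, and for this I would use mixing. If $f(I)=\{p\}$ for some basic interval $I$, then $f^n(\Int I)$ is a single point for all $n\ge1$. That contradicts mixing, since a mixing map must send the open set $\Int I$ to meet two disjoint open sets at the same time $n$. (This is the same argument used in Lemma~\ref{diameters_to_zero}.) Hence every slope is positive, each $f^{-1}(p)$ meets each basic interval in at most one point, and $f^{-1}(p)\cap\Reg(\G)$ is countable.

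The countability of $\mathrm{NEP}$ then follows, as a subset of a countable union of countable sets.

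One small point needs checking: preimages of regular points stay regular. This holds because $f(\End(\G))\subset\End(\G)$ by the Corollary following Lemma~\ref{finite_in_out}, so restricting to $\Reg(\G)$ loses nothing.
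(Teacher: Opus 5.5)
Your argument is correct and follows essentially the same route as the paper: you place $\mathrm{NEP}$ inside the backward orbit $\bigcup_{n} f^{-n}(P_f)$ and get countability inductively, because each of the countably many basic intervals contributes only finitely many preimages. You are in fact more careful than the paper, which asserts $\mathrm{NEP}=\bigcup_n \mathrm{P}_n$ and the count of ``exactly $n$ points'' without proving the inclusion or excluding zero slope; you prove the inclusion via the first index where two encodings differ, and you rule out $s_f(I)=0$ by mixing.

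Two small fixes. First, injectivity of $f|_I$ should cite the monotonicity built into linearity, since $\diam f([x,y])>0$ does not by itself give $f(x)\neq f(y)$. Second, your first step is the converse of the observation in the preceding lemma, not its contrapositive.
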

\begin{proof}
    Recall that $P_f$ is countable and invariant. Moreover, each basic interval covers a finite number of consecutive basic intervals. Thus, if $f([p, q])=\bigcup_{k=0}^{n} [p_k, p_{k+1}]$, then there are exactly $n$ points $x_1,\dots, x_n\subset (p, q)$ such that $f(x_k)\in P_f$. Let $\mathrm{P}_0=P_f$ and $\mathrm{P}_1=f^{-1}(P_f)\setminus P_f$ be the set of points that are mapped into $P_f$ at the first iteration of $f$, but do not belong to $P_f$. One can see that $\mathrm{P}_1$ is countable.

    Let $\mathrm{P}_n=f^{-n}(P_f)\setminus \bigcup_{k=0}^{n-1}\mathrm{P}_k$ be the set of points that are mapped into $P_f$ exactly at the $n$-th iteration of $f$. Since all of $\mathrm{P}_0,\dots, \mathrm{P}_{n-1}$ are countable, then $\mathrm{P}_n$ is countable. Thus $\mathrm{NEP}=\bigcup_{n=0}^\infty \mathrm{P}_n$ is countable.
\end{proof}

 \begin{lemma}\label{uep_homeo}
    The map $\phi$ induces a homeomorphism $\phi|_{\phi^{-1}(\mathrm{UEP})}$.
\end{lemma}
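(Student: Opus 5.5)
We have to show that $\phi$ restricted to $\phi^{-1}(\mathrm{UEP})$ is a homeomorphism onto $\mathrm{UEP}$. By definition of $\mathrm{UEP}$, every $x\in\mathrm{UEP}$ has exactly one preimage, so the restriction is injective. Surjectivity onto $\mathrm{UEP}$ comes from Lemma~\ref{phi_cont_surj}, and continuity of the restriction is inherited from continuity of $\phi$. Note also that $\phi^{-1}(\mathrm{UEP})$ is $\sigma$-invariant in both directions, since $\mathrm{UEP}$ consists exactly of the points whose forward orbit misses $P_f$. This is not needed for the homeomorphism claim, but it is convenient later. So the only real content is continuity of the inverse $\psi=(\phi|_{\phi^{-1}(\mathrm{UEP})})^{-1}\colon \mathrm{UEP}\to\phi^{-1}(\mathrm{UEP})$.

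For continuity of $\psi$, fix $x\in\mathrm{UEP}$ with encoding $v=(v_{[p_n,q_n]})_n$ and fix $N\in\mathbb{N}_1$. By the metric $d$ on $\Gamma^+_{G_f}$, it suffices to find a neighbourhood $U$ of $x$ in $\mathrm{UEP}$ such that every $y\in U$ has an encoding agreeing with $v$ in the first $N$ coordinates. The key fact, established in the proof of the previous lemma, is that $\mathcal{O}^+(x)\cap P_f=\emptyset$, so $f^{n-1}(x)\in(p_n,q_n)$ for every $n$.

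Since $f^{n-1}$ is continuous and $(p_n,q_n)$ is open in $\G$ (its points are not branch points, and the components of $\G\setminus P_f$ are open), the set
\begin{equation*}
U=\bigcap_{n=1}^N f^{-(n-1)}\bigl((p_n,q_n)\bigr)
\end{equation*}
is an open neighbourhood of $x$. For $y\in U\cap\mathrm{UEP}$, the point $f^{n-1}(y)$ lies in the interior of the basic interval $[p_n,q_n]$ for $n\le N$. Interiors of distinct basic intervals are disjoint, so the unique encoding of $y$ must have $n$-th symbol $v_{[p_n,q_n]}$ for $n\le N$. Hence $d(\psi(y),\psi(x))\le 2^{-N}$, which gives continuity of $\psi$.

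The main obstacle is making sure there is no hidden failure at points where the encoding is forced. First, the interior of a basic interval must be open in $\G$; this holds because $B(\G)\subset P_f$, and condition (3) of the Markovian definition ensures the components are genuine open free arcs. Second, the argument relies on the fact that points of $\mathrm{UEP}$ never hit $P_f$; I would restate this briefly from the preceding lemma rather than reprove it. With these two points in place the argument is routine.
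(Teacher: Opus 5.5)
Your proof is correct and follows the same route as the paper. Bijectivity and continuity are immediate, and continuity of the inverse comes from a neighbourhood of $x$ in $\mathrm{UEP}$ inside $\bigcap_{n\le N} f^{-(n-1)}([p_n,q_n])$, which forces the first $N$ symbols to agree; you justify that neighbourhood more carefully than the paper does (via $\mathcal{O}^+(x)\cap P_f=\emptyset$ and openness of $(p_n,q_n)$), though that openness comes from $(p_n,q_n)$ being an open free arc inside an edge, as in your last paragraph, not from the components of $\G\setminus P_f$ being open, which fails at endpoints since $P_f\supset B(\G)$ accumulates on $\End(\G)$.
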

\begin{proof}
        One can see that $\phi|_{\phi^{-1}(\mathrm{UEP})}$ is a bijection, which follows straight from the definition of the set $\mathrm{UEP}$. Moreover, it is clearly continuous, as $\phi$ is continuous. Thus, it remains to show that its inverse $\phi^{-1}$ is also continuous. We once again rely on the fact that for any sequence $(v_{[p_n,q_n]})_{n=1}^\infty\in\Gamma^+_{G_f}$ the diameter $\lim_{m\to\infty}\diam \bigcap_{n=1}^m f^{-n+1}([p_n, q_n])=0$. Let $x\in\mathrm{UEP}$ and $v(x)\in \Gamma^+_{G_f}$ be its encoding. Let $\epsilon>0$ and $n_\epsilon$ be such that $2^{-n_\epsilon}<\epsilon$. Define $\delta>0$ to be such a constant that a ball $$B_{\mathrm{UEP}}(x, \delta)=B(x, \delta)\cap \mathrm{UEP}\subset \bigcap_{n=1}^{n_\epsilon} f^{-n+1}([p_n, q_n]).$$ Then, if $y\in B_\mathrm{UEP}(x, \delta)$, then $v(x), v(y)$ must agree on $n_\epsilon$ initial positions and hence $d(v(x), v(y))<\epsilon$.
\end{proof}

\begin{theorem}\label{measurable_isomorphism_uep_shift}
     Let $(\G, f)$ be a mixing Markovian dynamical system and $\phi$ be the decoding function. Then $\phi$ induces an isomorphism between $\mathcal{M}_f^+(\G\setminus \End(\G))$ and $\mathcal{M}_\sigma^+(\Gamma^+_{G_f})$. Moreover, this isomorphism preserves the measure-theoretic entropy.
\end{theorem}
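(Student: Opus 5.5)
The plan is to push measures forward and pull them back along the decoding map $\phi$, using that $\phi$ is a bijection off a countable set. The key observation is that positive-entropy ergodic measures cannot charge countable sets. If $\nu$ is ergodic and $\nu(\{x\})>0$ for some $x$, then $\nu$ is supported on a periodic orbit and has zero entropy. Hence every $\mu\in\mathcal{M}_f^+(\G\setminus\End(\G))$ satisfies $\mu(\mathrm{NEP})=0$, by Lemma~\ref{DEP_countable}. Similarly, $\phi^{-1}(\mathrm{NEP})$ is countable, since each point has at most three encodings. Therefore every $\nu\in\mathcal{M}_\sigma^+(\Gamma^+_{G_f})$ gives full measure to $\phi^{-1}(\mathrm{UEP})$.

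Next I check invariance of the relevant sets. The set $\mathrm{NEP}$ consists exactly of the points whose forward orbit meets $P_f$, as the proof of the preceding lemma shows. It satisfies $f(\mathrm{NEP})\subset\mathrm{NEP}$, and also $f^{-1}(\mathrm{NEP})\cap\Reg(\G)\subset \mathrm{NEP}$, using $f(\End(\G))\subset\End(\G)$ and $f(\Reg)\subset\Reg$ from the corollary after Lemma~\ref{finite_in_out}. Hence $\mathrm{UEP}$ is invariant, with $f^{-1}(\mathrm{UEP})\cap\Reg(\G)=\mathrm{UEP}$. On the symbolic side, $\phi\circ\sigma=f\circ\phi$ holds by the definition of $\phi$, so $\phi^{-1}(\mathrm{UEP})$ is $\sigma$-invariant as well.

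With these facts I define the correspondence. Send $\nu\mapsto\phi_*\nu$. This is $f$-invariant by the conjugacy relation, ergodic because it is a factor of an ergodic measure, and it lives on $\Reg(\G)$. Conversely, send $\mu\mapsto(\phi|_{\phi^{-1}(\mathrm{UEP})})^{-1}_*\mu$. By Lemma~\ref{uep_homeo} this is a pushforward under a Borel bijection. The set $\mathrm{UEP}$ is Borel because it is the complement of a countable set in $\Reg(\G)$, and $\phi^{-1}(\mathrm{UEP})$ is Borel by continuity of $\phi$ (Lemma~\ref{phi_cont_surj}). The two maps are mutually inverse because both measures live on the full-measure sets where $\phi$ is a bijection. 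Restricted to these sets, $\phi$ is a measure-preserving bijection conjugating $\sigma$ and $f$, so the systems are measurably isomorphic. Theorem~\ref{entropy_iso} then gives equality of entropies. In particular, positive entropy is preserved in both directions, so the correspondence indeed maps $\mathcal{M}^+$ onto $\mathcal{M}^+$.

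The main obstacle is a bookkeeping issue. The isomorphism definition requires invariant full-measure subsets, with the equation $f^{-1}(A)=A$. Since $f$ is not injective, I will take $X'=\mathrm{UEP}$ and $Y'=\phi^{-1}(\mathrm{UEP})$ and verify $f^{-1}(\mathrm{UEP})=\mathrm{UEP}$ inside $\Reg(\G)$. A preimage of a point with orbit avoiding $P_f$ also has orbit avoiding $P_f$, unless the preimage itself lies in $P_f$. That case is impossible because $f(P_f)\subset P_f$. Endpoints are excluded because $f^{-1}(\Reg)\subset\Reg$ follows from $f(\End)\subset\End$ together with surjectivity; failing that, the endpoints carry no measure here anyway. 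I expect this verification, along with the measurability of $\phi^{-1}$ on $\mathrm{UEP}$ (handled by the homeomorphism Lemma~\ref{uep_homeo}), to be the only delicate points.
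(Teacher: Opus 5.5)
Your proposal is correct and follows the paper's proof. Positive-entropy ergodic measures give zero mass to the countable sets $\mathrm{NEP}$ and $\phi^{-1}(\mathrm{NEP})$, so they live on $\mathrm{UEP}$ and $\phi^{-1}(\mathrm{UEP})$; there Lemma~\ref{uep_homeo} makes $\phi$ a conjugating homeomorphism, and Theorem~\ref{entropy_iso} transfers entropy. You also correctly supply details the paper leaves implicit (the exact invariance $f^{-1}(\mathrm{UEP})=\mathrm{UEP}$, the atom argument for non-invertible maps, Borel measurability, transfer of ergodicity); the only nitpick is that $f^{-1}(\Reg(\G))\subset\Reg(\G)$ is just the contrapositive of $f(\End(\G))\subset\End(\G)$ and needs no surjectivity.
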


\begin{proof}
	Fix $\mu \in \mathcal{M}_f^+(\G\setminus \End(\G))$.
    Observe that $\mu$ is supported either on $\mathrm{UEP}$ or $\mathrm{NEP}$, as these are disjoint invariant subsets covering $\G\setminus \End(\G)$. Since $\mathrm{NEP}$ is countable, it follows that entropy of any ergodic measure $\mu_{\mathrm{NEP}}$ supported on $\mathrm{NEP}$ is zero, because such a measure must be supported on a periodic orbit. Thus, the measure $\mu$ must be supported on the set $\mathrm{UEP}$.

Recall that the sets $\mathrm{UEP}, \phi^{-1}(\mathrm{UEP})$ are homeomorphic by Lemma \ref{uep_homeo}. Thus, we have that the systems $(\G, f, \mu)$ and $(\Gamma^+_{G_f}, \sigma, \nu)$, where $\nu=(\phi^{-1})_*\mu$ is supported on $\phi^{-1}(\mathrm{UEP})$ are measurably isomorphic. Likewise, if we fix any measure $\nu\in \mathcal{M}_\sigma^+(\Gamma^{+}_{G_f})$, it must be supported on $\phi^{-1}(\mathrm{UEP})$, as we have that $\phi^{-1}(\mathrm{NEP})$ is countable. Thus, $(\Gamma^+_{G_f}, \sigma, \nu)$ and $(\G, f, \mu)$, where $\mu=\phi_*\nu$, are measurably isomorphic. Therefore, Theorem \ref{entropy_iso} applies.
\end{proof}

\begin{corollary}\label{MME_FINAL_CONDITION}
    Let $(\G, f)$ be a mixing Markovian dynamical system with $$\htop(\End(\G), f|_{\End(\G)})<\htop(\G, f)<\infty$$ and $G_f$ be its Markovian graph. Then $h(G_f)=\htop(\G, f)$ and $(\G, f)$ admits a unique measure of maximal entropy if and only if $G_f$ is positive recurrent.
\end{corollary}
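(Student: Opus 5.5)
The plan is to split $\G$ into the two invariant pieces $\End(\G)$ and $\Reg(\G)=\G\setminus\End(\G)$. The latter is invariant in the sense needed: $f(P_f)\subset P_f$ and points of $\Reg(\G)$ are mapped into $\Reg(\G)$ by the definition of $\phi$, and $f(\End(\G))\subset\End(\G)$ by the corollary following Lemma~\ref{finite_in_out}. Every ergodic $f$-invariant measure $\mu$ gives full measure either to $\End(\G)$ or to $\Reg(\G)$.

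\textbf{Step 1: $h(G_f)=\htop(\G,f)$.} By the Variational Principle (Theorem~\ref{VP}) with ergodic measures, $\htop(\G,f)$ is the maximum of two suprema. The first is over ergodic measures on $\End(\G)$; it is at most $\htop(\End(\G),f|_{\End(\G)})$, which is strictly below $\htop(\G,f)$ by hypothesis. The second is over ergodic measures on $\Reg(\G)$, and only those of positive entropy matter there. Hence
\[
\htop(\G,f)=\sup\{h_\mu(\G,f)\colon \mu\in\mathcal{M}^+_f(\G\setminus\End(\G))\}.
\]
Theorem~\ref{measurable_isomorphism_uep_shift} identifies this supremum with the supremum of $h_\nu$ over $\nu\in\mathcal{M}^+_\sigma(\Gamma^+_{G_f})$. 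To pass from positive-entropy ergodic measures to all invariant measures on $\Gamma^+_{G_f}$, we use ergodic decomposition together with the facts that zero-entropy measures do not affect a positive supremum and that $\htop(\G,f)>0$ (the hypothesis gives $\htop(\G,f)>\htop(\End(\G),f|_{\End(\G)})\ge 0$). Corollary~\ref{one_sided_shift_graph_VP} then gives that the resulting value is $h(G_f)$. That corollary needs $0<h(G_f)<\infty$, which now follows from $0<\htop(\G,f)<\infty$. One point to check is that Theorem~\ref{one-sided-two-sided} requires $\htop(\overline{\Gamma}_{G_f},\sigma)<\infty$; this is $h(G_f)$ by Theorem~\ref{entropia_grafu_domkniecie_shiftu}. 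To avoid circularity, we first bound the supremum over positive-entropy measures on $\Gamma^+_{G_f}$ directly by $\htop(\G,f)$ via the isomorphism, and then use Theorem~\ref{entropia_grafu_domkniecie_shiftu} in its two-sided form, with ergodic measures on $\Gamma_{G_f}$ transferred through Theorem~\ref{one-sided-two-sided}. The main delicacy lies here: one must verify that the finiteness hypothesis of Theorem~\ref{one-sided-two-sided} really holds before invoking it. I would try to obtain finiteness of $h(G_f)$ from finiteness of the entropies of finite subgraphs, which embed via $\phi$ into compact invariant subsets of $\Reg(\G)$, giving $h(G_f)\le\htop(\G,f)$ by Theorem~\ref{entropy_properties}.

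\textbf{Step 2: MMEs correspond.} An MME $\mu$ of $(\G,f)$ satisfies $h_\mu(\G,f)=\htop(\G,f)>\htop(\End(\G),f|_{\End(\G)})$. Therefore $\mu$ cannot live on $\End(\G)$, and it lies in $\mathcal{M}_f^+(\G\setminus\End(\G))$. By Theorem~\ref{measurable_isomorphism_uep_shift} and Step 1, $\mu$ corresponds to an ergodic $\nu$ on $\Gamma^+_{G_f}$ with $h_\nu=h(G_f)$, that is, to an MME of the one-sided chain. Conversely, any MME of the chain has positive entropy and maps to an MME of $(\G,f)$. This correspondence is a bijection. Corollary~\ref{one_sided_shift_graph_VP} then states that the chain has a unique MME exactly when $G_f$ is positive recurrent, and has none otherwise. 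Consequently $(\G,f)$ has a unique MME if and only if $G_f$ is positive recurrent.
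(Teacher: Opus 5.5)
Your argument is correct and follows the paper's proof. Like the paper, you split ergodic measures between $\End(\G)$ and $\G\setminus\End(\G)$, use the hypothesis to discard the endpoint part, transfer positive-entropy ergodic measures through Theorem~\ref{measurable_isomorphism_uep_shift}, and conclude with Corollary~\ref{one_sided_shift_graph_VP}. Your extra check that $h(G_f)<\infty$ is a worthwhile addition, since the paper's proof assumes it without comment when invoking that corollary.

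One correction in that check: $\phi$ need not be injective on $\Gamma^+_H$ for a finite subgraph $H$, because points of $\mathrm{NEP}$ can have two or three codings. So $\Gamma^+_H$ does not embed, and Theorem~\ref{entropy_properties} alone gives the factor inequality in the wrong direction. Justify $\htop(\Gamma^+_H,\sigma)\le\htop(\G,f)$ instead by noting that $\phi$ is at most three-to-one (Bowen's inequality). Alternatively, apply Theorem~\ref{measurable_isomorphism_uep_shift} together with the variational principle on the compact space $\Gamma^+_H$.
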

\begin{proof}
By Theorem \ref{measurable_isomorphism_uep_shift}, the map $\phi$ induces an isomorphism between the set $\mathcal{M}_f^+(\G\setminus \End(\G))$ and $\mathcal{M}_\sigma^+(\Gamma^+_{G_f})$.

Let $\mu$ be a measure on $\G$ such that $h_\mu(\G, f)>\htop(\End(\G), f|_{\End(\G)})$ provided by the variational principle. It follows that $h_\mu(\G, f)>0$ and thus, $\mu$ defines a measure $\nu$ on $\Gamma_{G_f}^+$ with equal entropy. Therefore, $\htop(\G, f)\leq \htop(\overline{\Gamma}^+_{G_f}, \sigma)$. On the other hand, every measure $\nu$ on $\Gamma_{G_f}^+$ with $h_{\nu}(\Gamma^{+}_{G_f},\sigma)>0$ defines a measure $\mu$ on $\G$ with equal entropy. Thus $\htop(\G, f)\geq \htop(\overline{\Gamma}^+_{G_f}, \sigma)$ and consequently $$\htop(\G, f)= \htop(\overline{\Gamma}^+_{G_f}, \sigma).$$
It follows that $h(G_f)=\htop(\G, f)$ by Corollary \ref{one_sided_shift_graph_VP}. Observe that if $(\G, f)$ admits a measure of maximal entropy, then this measure must be supported on $\G\setminus \mathrm{End}(\G)$. It follows from Corollary \ref{one_sided_shift_graph_VP} that $(\G, f)$ admits a unique MME if and only if $G_f$ is positive recurrent.
\end{proof}

\begin{corollary}\label{entropia_odwrocone_corollary}
    Let $(\G, f)$ be a mixing Markovian dynamical system and $G_f$ be its Markovian graph. If $\htop(\End(\G), f|_{\End(\G)})\leq h(G_f)$, then $\htop(\G, f)=h(G_f)$.
\end{corollary}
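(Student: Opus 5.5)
We split $\G$ into the two invariant pieces $\End(\G)$ and $\Reg(\G)=\G\setminus\End(\G)$ and apply the variational principle (Theorem~\ref{VP}, ergodic version). This gives
\[
\htop(\G,f)=\max\Big\{\htop(\End(\G),f|_{\End(\G)}),\ \sup\{h_\mu(\G,f)\colon \mu\in\mathcal{M}_f^+(\G\setminus\End(\G))\}\Big\}.
\]
Every ergodic measure lives either on $\End(\G)$ or on $\Reg(\G)$, since both are invariant: $f(\End(\G))\subset\End(\G)$ by the corollary following Lemma~\ref{finite_in_out}, and $f(P)\subset P$ together with $\sigma$-linearity gives $f(\Reg(\G))\subset\Reg(\G)$. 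Ergodic measures on $\End(\G)$ contribute at most $\htop(\End(\G),f|_{\End(\G)})$. Zero-entropy measures on $\Reg(\G)$ contribute nothing to the supremum, so we may restrict to $\mathcal{M}_f^+$.

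For the regular part we use Theorem~\ref{measurable_isomorphism_uep_shift}. The decoding map $\phi$ gives an entropy-preserving bijection between $\mathcal{M}_f^+(\G\setminus\End(\G))$ and $\mathcal{M}_\sigma^+(\Gamma^+_{G_f})$. Hence the supremum of $h_\mu$ over positive-entropy ergodic measures on $\Reg(\G)$ equals the supremum over positive-entropy ergodic measures on $\Gamma^+_{G_f}$. By Corollary~\ref{one_sided_shift_graph_VP} (or by Theorem~\ref{one-sided-two-sided} combined with Theorem~\ref{entropia_grafu_domkniecie_shiftu}), that supremum is $h(G_f)$, provided $h(G_f)>0$; the ergodic version follows from ergodic decomposition. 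Substituting into the displayed identity, the hypothesis $\htop(\End(\G),f|_{\End(\G)})\le h(G_f)$ makes the maximum equal to $h(G_f)$, which is the claim.

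The main obstacle is the hypotheses of the cited results, which assume finite positive entropy. For positivity: $f$ is mixing, hence transitive, so $\htop(\G,f)>0$ by Theorem~\ref{gehman_transitive_positive_entropy}. If $h(G_f)=0$, the hypothesis forces $\htop(\End(\G),f|_{\End(\G)})=0$. The displayed identity would then give $\htop(\G,f)=0$, because all ergodic measures on $\Reg(\G)$ would have zero entropy, each corresponding to a zero-entropy measure on the shift. This is a contradiction, so $h(G_f)>0$. For finiteness: if $h(G_f)=\infty$, I would argue directly that $\htop(\G,f)\ge h(G_f)$. Each finite subgraph $H$ of $G_f$ gives a compact subsystem $\phi(\Gamma_H^+)$ of $(\Reg(\G),f)$, and $\phi$ is finite-to-one on it (at most three preimages), so its entropy is at least $\htop(\Gamma_H^+,\sigma)$. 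Taking the supremum over $H$ gives $\htop(\G,f)\ge h(G_f)$. Both sides are then $\infty$, so equality holds without invoking the finite-entropy theorems.
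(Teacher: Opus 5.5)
Your proof is correct and follows the paper's argument. Like the paper, you split the variational principle into measures on $\End(\G)$ and on $\G\setminus\End(\G)$, then identify the regular part with $h(G_f)$ via the entropy-preserving bijection of Theorem~\ref{measurable_isomorphism_uep_shift} and Corollary~\ref{one_sided_shift_graph_VP}. Your separate treatment of $h(G_f)=0$ and $h(G_f)=\infty$ (the latter via finite subgraphs and the at-most-three-to-one decoding map $\phi$) is a worthwhile addition, since the paper's proof invokes Corollary~\ref{one_sided_shift_graph_VP} without checking its finite positive entropy hypothesis.
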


\begin{proof}
By Corollary \ref{one_sided_shift_graph_VP} we have $h(G_f)=\sup\{h_\mu(\Gamma_{G_f}^+, \sigma)\colon\; \mu\in\mathcal{M}_\sigma(\Gamma^+_{G_f})\}$. Moreover, by Theorem \ref{measurable_isomorphism_uep_shift}, there exists a bijection between ergodic measures on $\Gamma_{G_f}^+$ and $\G\setminus\End(\G)$ with positive entropy which preserves their entropy. We have that 
\begin{equation*}
    \begin{split}
        \htop(\G, f)=\sup\{&h_\mu(\G, f)\colon\; \mu\in\mathcal{M}_f(\G)\}=\\
        =\max\{\sup\{&h_\mu(f) : \mu\in \mathcal{M}_f(\G), \mu(\End(\G))=0\},\\
            &\htop(\End(\G), f|_{\End(\G)})\}.
    \end{split}    
\end{equation*}
Note that
    \begin{equation*}
        \begin{split}
        h(G_f)    &=\sup_{\mu\in\mathcal{M}_\sigma(\Gamma^+_{G_f})}\{h_\mu(\sigma)\}=\sup_{\mu\in\mathcal{M}_f(\G\setminus\End(\G))}\{h_\mu(f)\}=\\
        &=\sup\{h_\mu(f) : \mu\in \mathcal{M}_f(\G), \mu(\End(\G))=0\}.
        \end{split}
    \end{equation*}
    Thus, $\htop(\G, f)=h(G_f)$.
\end{proof}

\section{Main result}
We now turn to the approximation theorem. The purpose of the construction is twofold. First, it replaces the original map by a nearby Markovian map whose action is explicitly controlled on a countable family of free arcs. Second, it prescribes the recurrence type of the associated countable Markov graph, which in turn determines the existence of MMEs by the criteria recalled above. We prove the following.

\begin{theorem}\label{Markovian_approximation}
    Let $(\G, f)$ be a transitive TDS with $f(\End(\G))=\End(\G)$. Then for any $\gamma>0$ there exist two mixing Markovian maps $\hat{f}_\textnormal{NME}, \hat{f}_\textnormal{MME}$ on $\G$ such that
    \begin{enumerate}
        \item $d_{\sup}(f, \hat{f}_\textnormal{NME})<\gamma$ and  $d_{\sup}(f, \hat{f}_{\textnormal{MME}})<\gamma$,
        \item $(\G, \hat{f}_\textnormal{NME})$ admits no measures of maximal entropy,
        \item $(\G, \hat{f}_\textnormal{MME})$ admits a unique measure of maximal entropy.
    \end{enumerate}
\end{theorem}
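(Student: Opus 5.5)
We construct both maps by the same two-stage scheme: first a finite Markov approximation on a deep tree $T_n$, then a countable Markovian extension into $\G\setminus T_n$ with a prescribed recurrence type. The MME/NME conclusion then follows from Corollary~\ref{MME_FINAL_CONDITION}.

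\textbf{Finite part.} Fix $\gamma>0$ and use uniform continuity of $f$ to choose $n$ so large that every component of $\overline{\G\setminus T_n}$ (a copy of $\G$, see Remark~\ref{retraction_map}) has diameter well below $\gamma$. We also require that $R_n\circ f$ moves points by less than $\gamma/4$ relative to $f$. On $T_n$ we approximate $R_n\circ f|_{T_n}$ by a $P$-Markov map $g\colon T_n\to T_n$ within $\gamma/4$. The set $P$ is finite and contains $B(T_n)$ and the leaves $b_\omega$, $|\omega|=n$, and $g$ is built from $M$-folds between basic intervals.

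Transitivity of $f$ passes to the coarse covering pattern of the edges of $T_n$. We add extra folds (slope at least $3$, a loop of length one, and a loop of length two through a common vertex) so that $G_g$ is strongly connected, not a cycle, and aperiodic. Theorems~\ref{connected_transitive}, \ref{totally_transitive_subtrees_Markov} and \ref{totally_transitive_exact} then make $g$ exact on $T_n$. Taking slopes large also lets us make $h(G_g)$ as large as we like, which we will need below.

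\textbf{Countable part.} We extend $g$ to $\hat f$ on the hanging copies $D_\omega$ of $\G$ at the leaves $b_\omega$, in two ways. First, each $D_\omega$ is mapped close to $f(b_\omega)$, so it lands in $D_{\omega'}$ or near $T_n$, keeping $d_{\sup}$ small. Second, $\End(\G)$ is sent to $\End(\G)$ by a homeomorphism between the Cantor sets of the $D$'s, for instance the shift-like map $D_\omega\to D_{\omega'}$ that strips depth. We take this map with small entropy, say the identity-like self-similar map of slope $\le 1$ between corresponding levels. Concretely, the edge of level $k>n$ in $D_\omega$ is mapped by a one-fold onto a level $k-1$ edge, plus an extra return fold onto a fixed basic interval of $T_n$.

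This defines $\hat f$ on all of $\G$ with $P$ locally finite and $\hat f(P)\subset P$, so $\hat f$ is $P$-Markovian. The Lipschitz constant is finite, so Theorem~\ref{lipschitz_entropy_graf} gives finite entropy. On $\End(\G)$ the map has entropy at most $\log 2$, and we ensure $h(G_g)$ exceeds this. Mixing follows because every arc eventually covers a basic interval of $T_n$ (the depth decreases, and the returns exist), and $g$ is exact there.

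\textbf{Recurrence type.} The two versions differ only in how the countable part feeds back into $T_n$.

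For $\hat f_{\rm NME}$ we make the graph transient. The countable part is a chain of vertices $u_k$ (level $k$ edges) with $u_k\to u_{k-1}$, together with "outward" branching of rate $e^{h}$. We tune the numbers of folds $a_k$ so that the first-return series to a base vertex satisfies $\sum f_{uu}(n)R^n<1$ at $R=e^{-h(G)}$, while the entropy $h(G)$ is carried by the infinite tail. Gurevich-type examples, such as a "ladder" with $a_k$ folds growing so that the path counts along the tail match the growth rate, do this. Then no proper finite subgraph attains $h(G)$, and by Theorem~\ref{Ruette_subgraphs_SPR} and Theorem~\ref{MME_existence} (non-positive recurrent) there is no MME.

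For $\hat f_{\rm MME}$ we do the opposite. The finite part $W=G_g$ gets entropy strictly larger than the exponential growth of excursions into $W^c$. That growth is bounded by the slopes on the countable part, which we keep at most $2$, while $h(W)>\log 2$. Theorem~\ref{SPR_high_local_entropy} then gives positive recurrence, in fact strong positive recurrence via Theorem~\ref{Ruette_subgraphs_SPR}.

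Finally, Corollary~\ref{MME_FINAL_CONDITION} applies because $\htop(\End(\G))\le\log2<h(G_{\hat f})$. The hardest step is the NME construction. Transience requires the tail entropy to equal $h(G)$ while the $\End(\G)$ entropy stays strictly below it and the map stays $\gamma$-close. We need to balance the fold counts $a_k$ against the shrinking diameters $4^{-k}$, keeping the $\End$-restriction of low entropy and the Lipschitz bound finite. We would first try a single ray of levels carrying all the growth, with fold counts chosen from an explicit transient generating function.
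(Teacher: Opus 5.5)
The decisive gap is strong connectivity. As you describe it, $\hat f|_{T_n}=g$ with $g(T_n)\subseteq T_n$. On the hanging copies $D_\omega$, edges are sent only upward (level $k$ onto level $k-1$) or back into $T_n$. So in neither version is any basic interval of $T_n$ mapped outside $T_n$, and in the MME version there are no downward transitions at all. Hence $T_n$ is a forward invariant set with nonempty interior. Then $\hat f$ is not even transitive, $G_{\hat f}$ is not strongly connected, and you cannot invoke Theorems~\ref{MME_existence}, \ref{Ruette_subgraphs_SPR}, \ref{SPR_high_local_entropy} or Corollary~\ref{MME_FINAL_CONDITION}. Your mixing argument shows only that every open set eventually covers $T_n$; the converse half is missing. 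The paper builds it in explicitly:
\begin{itemize}
\item the exit intervals $\mathcal{J}_{\textnormal{exit}}\subset T_M$ are refolded so their images run into the first floor $\mathcal{F}_1$ and into the arcs $I_k$;
\item every edge of $\mathcal{F}_n$ is stretched over a binary tree spanning $\mathcal{F}_{n-1}\cup\mathcal{F}_n\cup\mathcal{F}_{n+1}$;
\item in the components $A_k$, an odometer on the skeleton together with the special edges along $1^{(n)}$ and $0^{(n)}$ moves between levels in both directions.
\end{itemize}
Two further points also need repair. First, giving every level-$k$ edge an extra fold onto a \emph{fixed} basic interval of $T_n$ is incompatible with continuity at $\End(\G)$. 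That interval would be covered by basic intervals of diameter tending to $0$, which is exactly what the proof of Lemma~\ref{finite_in_out} excludes; it also breaks $d_{\sup}$-closeness. Returns must go level by level. Second, slope is the wrong quantity in your excursion bound. A one-fold of a level-$k$ edge onto a level-$(k-1)$ edge already has slope $4$ in $d_{\G}$, so ``slope $\le 1$'' or ``at most $2$'' contradicts depth stripping. Theorem~\ref{SPR_high_local_entropy} needs a uniform bound $D$ on the number of basic intervals covered from the countable part (including the downward transitions you must add), together with $h(W)>\log D$. With that correction your MME half is essentially the paper's $S'$ versus $S''$ argument.

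The NME half, which is the substance of item (2), is only a plan. You give no fold counts $a_k$. You do not check that out-degrees stay bounded: if $a_k\to\infty$ the graph entropy can be infinite, and the hypothesis $\htop(\End(\G),\hat f|_{\End(\G)})<\htop(\G,\hat f)<\infty$ of Corollary~\ref{MME_FINAL_CONDITION} is unverified. You do not check that the folds can be realized continuously and $\gamma$-close to $f$, and you do not prove $\sum f_{uu}(n)R^n<1$. The justification you do give is a non sequitur. In an infinite strongly connected graph of positive entropy, no proper finite subgraph ever attains $h(G)$: enlarge it to a strictly larger finite strongly connected subgraph and apply Perron--Frobenius. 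So that observation says nothing about recurrence. Theorem~\ref{Ruette_subgraphs_SPR} needs a proper subgraph (necessarily infinite) of full entropy, and even that only excludes strong positive recurrence, not positive recurrence. The paper's mechanism avoids generating-function tuning entirely:
\begin{itemize}
\item all out-degrees are at most $S'$;
\item the level subgraphs $H_n$ inside the $A_k$ carry $3^{2N'}(S')^{N'2^{n+1}-2}$ loops of length $N'2^{n+1}$, so $h(G)=\log S'$ and $R^{-1}=S'$;
\item the constant vector $\mathbf 1$ is then $R$-subinvariant but not $R$-invariant, since some vertices have fewer than $S'$ successors, so Theorem~\ref{Seneta:Rec} rules out positive recurrence;
\item $\htop(\End(\G),\hat f|_{\End(\G)})\le\log L<\log S'$, by comparison with the map having the same skeleton action but one-folds in place of $S'$-folds, which is $L$-Lipschitz.
\end{itemize}
Your proposal still lacks an explicit mechanism of this kind, compatible with continuity at $\End(\G)$ and with $d_{\sup}$-closeness.
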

\begin{proof}
We keep the proof constructive, since the same scheme will be used twice: first to build a transient graph, and then to modify it into a strongly positive recurrent graph. Let $\gamma>0$ be an arbitrary positive constant. We may assume that $\gamma <\diam(\G)$. Set $\epsilon=\frac{1}{22}\gamma$ and associate a positive $\delta<\epsilon$ such that $d_{\G}(x, y)<\delta\implies d_{\G}(f(x), f(y))<\epsilon$. Let  $\mathcal{U}$ be a finite open cover of $\G$ with $\mesh(\mathcal{U})\leq \delta/2$. By transitivity $V_\mathcal{U}:=\bigcap_{U\in\mathcal{U}}\bigcup_{n\ge0}f^{-n}(U)$ is a finite intersection of open dense sets, hence open dense itself (in particular nonempty).   By Theorem \ref{gehman_transitive_positive_entropy}, there exists a periodic point $o\in V_{\mathcal{U}}$ of some period $N$. Then by the definition, $o$ has the following property: for every $U\in\mathcal{U}$ there exists $0\leq n<N$ such that $f^{n}(o)\in U$. Moreover, there exists $M_0\in\mathbb{N}$ such that $T_{M_0}\in \mathcal{T}$ satisfies $\{f^{n}(o)\colon\;0\leq n< N\}\subset T_{M_0}$ (recall that the family $\mathcal{T}$ was defined in Section \ref{the_family_of_trees}).

    Let $\epsilon'=\dist(\End(T_{M_0}), \End(\G))$. Again, associate a positive $\delta'<\epsilon'$ such that if $d_{\G}(x, y)<\delta'$ then $d_{\G}(f(x), f(y))<\epsilon'$. There exists $M\in\mathbb{N}$ such that $M-M_0=d>1$ and the Hausdorff distance $$\dist_H(\End(T_M), \End(\G))<\delta',$$ where $T_M$ is again an element of $\mathcal{T}$. Set $R_M\colon\G\to T_{M}$ to be a standard retraction map (see Remark \ref{retraction_map}).
    \begin{step}
        Construction of $g\colon T_M\to T_M$ approximating $R_M\circ f$.
    \end{step}
    Let $h=R_M\circ f|_{T_M}$ and $P\subset T_M$ be a finite and $\delta/2$-dense set of points satisfying $$\{f^{n}(o)\colon 0\leq n < N\}\cup (B(\G)\cap T_M)\subset P.$$  
    We will define a map $g\colon T_M\to T_M$ with respect to $P$ and $h$. 
    
    Fix $p\in P\setminus \End(T_M)$. Set $g(p)=q$, where $q\in P$ is such that $d(h(p), q)$ is minimal. Then, for $e\in \End(T_M)$ set $g(e)=e'\in \End(T_M)$ such that $d(h(e), e')$ is minimal. Fix an arc $[p, q]$, where $p, q\in P$ are adjacent points and let $X_{[p, q]}\subset T_M$ be a minimal tree such that $\End(X_{[p, q]})\subset P$ and the union of balls in $T_M$ 
    \begin{equation*}
        \bigcup_{x\in[p, q]}B(h(x), \delta)\subset X_{[p, q]}\subset \bigcup_{x\in[p, q]}B(h(x), 2\delta).
    \end{equation*}
    Let $r\in P$ be such that $3\delta<\dist(r, [g(p), g(q)])<4\delta$. Note that such $r$ can always be found.  Indeed, the set $P$ is $\delta/2$-dense in $T_M$ with $\delta<\diam(\G)/22$, while, since the parameter $d>1$, we have that $\diam(T_M)>\frac{15}{16}\diam(\G)$ by the definition of the metric $d_{\G}$. Denote $\End(X_{[p, q]})=\{e_1,\dots, e_k\}$. We extend the map $g$ on $[p, q]$ in the following way: divide $[p, q]$ into $2k$ subintervals with pairwise disjoint interiors, i.e.
    $$[p, q]=[p, j_1]\cup[j_1, j_2]\cup\dots\cup[j_{2k-2}, j_{2k-1}]\cup[j_{2k-1}, q]$$ 
    where points $j_i$ are ordered linearly in $[p,q]$.
     Set $g(j_{2m-1})=e_m$, $g(j_{2m})=r$ for $m=1,\dots, k-1$ and $g(j_{2k-1})=e_k$. Then extend $g$ piecewise linearly, setting a $3$-fold between

    \begin{itemize}
        \item $[p, j_1]\textnormal{ and }[g(p), e_1]$,
        \item $[j_{2m-1}, j_{2m}]\textnormal{ and }[e_m, r],\; m=1,\dots,k-1,$
        \item $[j_{2m}, j_{2m+1}]\textnormal{ and }[r, e_{m+1}],\; m=1,\dots,k-1$,
        \item $[j_{2k-1}, q]\textnormal{ and }[e_{k}, g(q)]$.
    \end{itemize}

    \begin{remark}
        This is the first occurrence of a pattern used throughout the construction: after prescribing the images of finitely many marked points on a free arc, the map is extended linearly on the complementary subarcs. Note that this method of extending a map $g$ over an arc $[p, q]$ with fixed images $g(p), g(q)$ satisfies Definition \ref{piecewise_linearity_def} of piecewise linearity because every such arc $[p, q]$ is free by design. Moreover, whenever the image has finitely many linear components (e.g. in the case of a tree $X_{[p, q]}$), this holds true regardless of the choice of endpoints $j_1,\dots,j_k\subset(p, q)$. We will use this method of piecewise linear extension repeatedly in the later stages of the paper.
    \end{remark}

    Let $P([p, q])$ be the set of all endpoints of linearity intervals in $[p, q]$.
    
    \begin{claim}\label{g_slope_2}
        The slope $s_g>2$.
    \end{claim}
    \begin{proof}
        Observe that for any $P$-basic interval $I$ we have $\diam(I)<\delta/2$ by the choice of the initial $\delta/2$-dense set $P$. On the other hand, the diameter of its image $\diam(g(I))>\delta$. This follows from the fact that for every $[p, q]$ with $p, q$ - adjacent we chose $r\in P$ such that $\dist(r, [g(p), g(q)])>3\delta$. Thus, the image $g(I)$ must be an arc $[r, e]$ for some endpoint $e\in \End(X_{[p, q]})$, and we have that $\dist(e, [g(p), g(q)])<2\delta$.
    \end{proof}
    \begin{claim}\label{g_exact_markov}
        The map $g$ is Markov and exact.
    \end{claim}
    \begin{proof}
        Let us start by proving that $g$ is Markov. Let $P_\textnormal{adj}$ be the set of all pairs $(p, q)$ for $p, q\in P$ that are adjacent in $T_M$. Observe that each such pair of points forms an interval $[p, q]$ on which we defined a piecewise linear action of $g$. One can see that the set
        \begin{equation*}
            P_g=\bigcup_{(p, q)\in P_\textnormal{adj}} P([p, q])
        \end{equation*}
        is a finite $g$-invariant set such that for each connected component $C\subset T_M\setminus P_g$ the map $g$ is linear on $\overline{C}$  - as the map $g$ has been explicitly defined in this way.

        We will show that $g$ is transitive. Observe that the Markov graph $G_g$ is not a cycle, as there are at least two distinct outgoing edges from every vertex in $G_g$. Thus, it is sufficient to show that $G_g$ is strongly connected. Then the map $g$ must be transitive by Theorem \ref{connected_transitive}.
        
        Let $[j_1, j_2]$ be any $P_g$-basic interval. Then $g([j_1, j_2])$ must necessarily contain $[p, q]$ for some $(p, q)\in P_\textnormal{adj}$. Hence, there exists $k\in\{0,\dots,N-1\}$ such that  $f^{k}(o)\in g^{2}([j_1, j_2])$ as the image $g([p, q])$ contains $\bigcup_{x\in[p, q]} B(h(x), \delta)$. Now, by the choice of initial $\delta/2$-dense set $P$, any $P_g$-basic interval is contained in $B(f^{k}(o), \delta)$ for some $k\in\{0,\dots,N-1\}$. Thus, $G_g$ must be strongly connected by the choice of periodic point $o$, whose orbit is $\delta/2$-dense.

        To prove that $g$ is exact, it suffices to show that $g$ is totally transitive. We will do so by proving that there cannot be any finite family of nondegenerate subtrees with pairwise disjoint interiors satisfying Theorem \ref{totally_transitive_subtrees_Markov}. Assume that there is such a family $\mathcal{D}$ and thus, there are $X_1, X_2\in\mathcal{D}$ and let $e\in X_1\cap X_2$ be their common endpoint. It follows that necessarily $e\in P_g$. Then $g(e)=g(X_1)\cap g(X_2)$ is also an endpoint of two distinct trees from the family. For $[e, p]\subset X_1$, which is a $P_g$-basic interval for $g$ we have that $g([e, p])$ contains the set $\bigcup_{x\in[e, p]}B(h(x),\delta)$. Recall that $d_{\G}(g(x), h(x))<\delta/2$. Therefore, $g(X_1)=g(X_2)$ because they have intersection with a nonempty interior. We then obtain that $X_1=g^{n}(X_1)=g^{n}(X_2)=X_2$. This is a contradiction. Thus $g$ must be exact by Theorem \ref{totally_transitive_exact}.
    \end{proof}

    \begin{claim}\label{diam_g_pq}
        The distance $d_{\sup}(f, g)<9\epsilon$.
    \end{claim}
    \begin{proof}
        Fix any $x\in T_M$ and let $p, q\in P$ be adjacent points such that $x\in[p, q]$. Then $h(x)\in X_{[p, q]}$ and $g(x)\in B(X_{[p, q]}, 4\delta)$. Thus $$d(f(x), g(x))\leq \diam(X_{[p, q]})+4\delta = \epsilon+4\delta+4\delta<9\epsilon.$$
        Since we have that $d_{\sup} (f, h)<\epsilon$, the assertion follows.
    \end{proof}
    
    We will select a family of special $P_g$-basic intervals which will be important for the construction. Denote $\End(T_M)=\{e_1,\dots, e_{2^M}\}$. For each $n=1\dots,2^M$ we choose a $P_g$-basic interval $[p_n, q_n]$ such that $g(p_n)=e_n$. Then we set
    \begin{equation*}
        \mathcal{J}_\textnormal{exit}=\{[p_n, q_n]\colon\; n=1,\dots, 2^M\}.
    \end{equation*}
    We assume that the intervals $\mathcal{J}_\textnormal{exit}$ are pairwise disjoint, since one can always pick the middle subinterval in each $3$-fold.
   
    \begin{step}
        Extending $g$ to a map $\hat{f}_\textnormal{NME}\colon\G\to\G$.
    \end{step}
    
    We will define the map $\hat{f}_\textnormal{NME}\colon\G\to\G$ with respect to the tree $T_M$ and the associated map $g\colon T_M\to T_M$. Firstly, for all 
    $$x
    \in \overline{T_M\setminus \bigcup\mathcal{J}_\textnormal{exit}}=\overline{T_M\setminus \bigcup_{n=1}^{2^M}[p_n, q_n]}
    $$ 
    we define
    \begin{equation*}
        \hat{f}_\textnormal{NME}(x)=g(x).
    \end{equation*}

    We now turn to $\G\setminus T_M$. Observe that, since we have that $g(\End(T_M))\subset \End (T_M)$, there exists a periodic point for $g$ inside $\End(T_M)$.  Fix a periodic point $o'\in \End(T_M)$ and let $N'$ be its period. Let $I_k=[g^{k}(o'), r_k]$ be an edge in $\G$ for $k=0,\dots, N'-1$, where $r_k\in B(\G)$ is a branching point, whose binary code in $\G$ ends with $1$. In other words, $r_k$ is the right child of the vertex $g^{k}(o')$ (see Section \ref{the_family_of_trees}). Define $\mathcal{S}$ as a connected component of $\overline{\G\setminus \bigcup_{k=0}^{N'-1}[g^{k}(o'), r_k]}$ such that $T_M\subset \mathcal{S}$, which means that $\mathcal{S}$ is a dendrite obtained by removing first all ``rightmost" edges in the level $\mathcal{L}_{M+1}$ that have $g^{n}(o')$ for some $0\leq n< N'$ as their endpoint and then discarding from all the resulting connected components the ones that do not contain $T_M$ inside them. 
    We will denote by $A_k$ the connected component (Gehman dendrite) discarded at this step which has a common point with $I_k$ (its root). We will come back to these components at a much later step of the construction.
     
    Before going any further, observe that the dendrite $\mathcal{S}$ is also a Gehman dendrite. Indeed, one can see that $\mathcal{S}$ is formed by connecting a subfamily of the Gehman dendrites $\G_1,\G_2,\dots\subset \overline{\G\setminus T_{M+1}}$ (see Remark \ref{retraction_map}) with a finite tree $T\subset T_{M+1}$ in such a way that for every endpoint $e_n\in\End(T)$ there exists $\G_n$ such that the root of $\G_n$ and $e_n$ are identified in $\mathcal{S}$. Thus, all branching points are of order 3, the set of endpoints of $\mathcal{S}$ is a Cantor set and we invoke Theorem \ref{Gehman_unique}. We consider the same metric $d_{\G}$ on $\mathcal{S}$. Observe that by the choice of parameters $\delta', \epsilon'$ we have that for connected component $C\subset\mathcal{S}\setminus T_M$ its image $f(C)\subset \overline{\G\setminus T_{M_0}}$, since $f(\End(\G))\subset\End(\G)$. 
    We can present $\overline{C}=I_C\cup A_C$, where $I_C\subset \mathcal{L}_{M+1}$ is an edge and $A_C$ is a homeomorphic copy of $\G$.
\begin{figure}
\begin{tikzpicture}[scale=1]
	\tikzset{
		bp/.style={circle,fill=black,inner sep=1.2pt},
		rt/.style={circle,fill=white,draw=black,inner sep=1.4pt},
		removed/.style={dashed, red, thick},
		kept/.style={thick, blue},
		dendrite/.style={fill=blue!7, draw=blue!80!black, thick, rounded corners=2pt},
		discarded/.style={fill=red!7, draw=red!80!black, dashed, rounded corners=2pt}
	}
	\draw[fill=blue!4, draw=blue, thick] (0, 0) -- (-4.6, -3) -- (4.6, -3) -- cycle;
	\node[rt, label=above:{$b_\lambda$ (root)}] at (0,0) {};
	\node at (0, -1.5) {$T_M \subset \mathcal{S}$};

	\node[bp, label=above right:{$o'$}] (o) at (-3.2, -3) {};
	\node[bp, label=above left:{$g^j(o')$}] (go) at (3.2, -3) {};

	\node[bp, label=above:{$e_n\neq g^i(o')$}] (e) at (0, -3) {};
	\draw[kept] (e) -- (-0.8, -4);
	\draw[kept] (e) -- (0.8, -4);
	\draw[dendrite] (-0.8, -4) -- (-1.5, -5.5) -- (-0.1, -5.5) -- cycle;
	\draw[dendrite] (0.8, -4) -- (0.1, -5.5) -- (1.5, -5.5) -- cycle;
	\node at (0, -4.8) {$\dots$};
	
	\node[bp] (oL) at (-4.0, -4) {};
	\node[bp, label=right:{$r_0$}] (oR) at (-2.4, -4) {};
	\draw[kept] (o) -- (oL) node[midway, left=2pt] {\color{black} $I_C$};
	\draw[removed] (o) -- (oR) node[midway, right=2pt, text=red] {$I_0$};
	
	\node[bp] (goL) at (2.4, -4) {};
	\node[bp, label=right:{$r_j$}] (goR) at (4.0, -4) {};
	\draw[kept] (go) -- (goL);
	\draw[removed] (go) -- (goR);
	
	\draw[dendrite] (oL) -- (-4.7, -5.5) -- (-3.3, -5.5) -- cycle;
	\node at (-4.0, -4.5) {$A_C$};
	\node at (-4.0, -4.9) {$||$};
	\node at (-4.0, -5.2) {$\mathcal{G}_1\subset \mathcal{S}$};
	
	\draw[dendrite] (goL) -- (1.7, -5.5) -- (3.1, -5.5) -- cycle;
	\node at (2.4, -5.2) {$\mathcal{G}_j \subset \mathcal{S}$};
	
	\draw[discarded] (oR) -- (-3.1, -5.5) -- (-1.7, -5.5) -- cycle;
	\node[red!80!black] at (-2.4, -4.8) {$A_0$};
	\node[red!80!black] at (-2.4, -5.2) {\scriptsize(discarded)};
	
	\draw[discarded] (goR) -- (3.3, -5.5) -- (4.7, -5.5) -- cycle;
	\node[red!80!black] at (4.0, -4.8) {$A_j$};
	\node[red!80!black] at (4.0, -5.2) {\scriptsize(discarded)};
	
	\draw[decorate, decoration={brace, amplitude=10pt, mirror}, thick, blue!80!black] (-4.8, -5.8) -- (3.2, -5.8) node[midway, below=12pt] {Gehman Dendrite $\mathcal{S}$};
	
\end{tikzpicture}
\caption{Schematic structure of dendrite $\mathcal{S}$}\label{fig:ConstructionS}
\end{figure}
    Fix the closure $\overline{C}=I_C\cup A_C$ of a connected component $C\subset\mathcal{S}\setminus T_M$. There exists a connected component $A'_C\subset \overline{\mathcal{S}\setminus T_{M_0}}$ with $f(A_C)\subset A'_C$ and $A_C, A'_C$ are homeomorphic (since each of them is homeomorphic to the Gehman dendrite). Let $r(A_C)=A_C\cap I_C, r(A'_C)=A'_C\cap T_{M_0}$ be roots of these dendrites. There exists a natural homeomorphism $h_{f(A)}\colon A_C\to A'_C$ such that $h(r(A_C))=r(A'_C)$. For each element $x$ of the countable set $x\in \End(A_C)\cup B(A_C)\cup \{r({A_C})\}$ we define $$\hat{f}_\textnormal{NME}(x)=h_{f(A)}(x).$$

    \begin{remark}\label{skeleton}
        Recall that the original map $f\colon \G\to\G$ is surjective. Thus, for each connected component $D\subset\overline{\mathcal{S}\setminus T_{M_0}}$ we may find at least one $C$ and $A_C\subset\mathcal{S}\setminus T_M$ such that $f(A_C)\subset A_{C}'=D$. Since we defined the map $\hat{f}_\textnormal{NME}$ on $\End(A_C)\cup B(A_C)\cup \{r({A_C})\}$ in such a way that all these special points in $A_C$ are mapped onto their respective copies via a homeomorphism, we have uniquely transferred all of the $A_C$'s structure onto $A_C'$. Thus, the skeleton $\End(A_C)\cup B(A_C)\cup \{r({A_C})\}$ of the dendrite $A_C$ was fully mapped onto the skeleton $\End(A_C')\cup B(A_C')\cup \{r({A_C'})\}$  of $A_C'$ and for any component $A_C'$ there's at least one $A_C$, whose skeleton has been mapped onto $A_C'$.
    \end{remark}

    For each of the endpoints of $I_C=[e, r(A_C)]$, where $e\in \End(T_M)$, the images $\hat{f}_\textnormal{NME}(e)$ and $\hat{f}_\textnormal{NME}(r(A_C))$ have already been defined. We define $\hat{f}_\textnormal{NME}$ on $[e, r(A_C)]$ as the linear stretch over the arc $[\hat{f}_\textnormal{NME}(e), r(A'_C)]$. 

Since $\mathcal{S}$ is the Gehman dendrite, there exists an enumeration system for all its branching points in a manner similar to that presented for $\G$ in Section \ref{the_family_of_trees}. Moreover, one can see that the only branching points in $\G$ that are not branching points in $\mathcal{S}$ are the ones forming the periodic orbit $g^{k}(o')$. Thus, for all components $\overline{C}=I_C\cup A_C$, where the edge $I_C=[g^k(o'), b]$ is for $b\in B(\mathcal{S})$, the enumeration of the branching points in the dendrite $A_C$ in $\mathcal{S}$ is different from the one in $\G$, and this occurs only for those dendrites. We extend the notion of the $n$-th level $\mathcal{L}_n(\mathcal{S})$ in $\mathcal{S}$ to be the union of all edges $[b_{\omega}, b_{\omega'}]\subset\mathcal{S}$ such that the enumerating binary sequence $\omega'\in\{0, 1\}^n$. Note that there are edges in $\mathcal{S}$ that are formed by a union of two edges in $\mathcal{G}$ - and those are precisely the edges that contain $g^k(o')$ for some $k\in\{0,\dots, N'-1\}$. Let $\mathcal{F}_n=\overline{\bigcup_{m=M+(n-1)d+1}^{M+nd}\mathcal{L}_m(\mathcal{S})}$ for $n\in\mathbb{N}$. We will call $\mathcal{F}_n$ the \emph{$n$-th floor} in $\mathcal{S}$. Let $B_n(\mathcal{S})=\mathcal{F}_{n-1}\cap\mathcal{F}_n$ for $n\in\mathbb{N}_1$ and $B_0(\mathcal{S})=\End(T_{M_0})$.

    \begin{claim}\label{level_up}
        Let $[b, b']\subset\mathcal{F}_n$  for $n\in\mathbb{N}_1$ be a free arc. Then the images $\hat{f}_\textnormal{NME}(b)$, $\hat{f}_\textnormal{NME}(b')\in\mathcal{F}_{n-1}$ and $d_{\G}(\hat{f}_\textnormal{NME}(b), \hat{f}_\textnormal{NME}(b'))\geq 4d_{\G}(b, b')$.
    \end{claim}
    \begin{proof}
        The first part of the claim follows from the definition of floors in $\mathcal{S}$ and the fact that we defined $\hat{f}$ on $B(\mathcal{S})$ using natural homeomorphisms identifying the roots of respective subdendrites in $\mathcal{S}$.
        
        Assume that $[b, b']\subset \overline{C}$, where $\overline{C}=I_C\cup A_C$ and $C\subset \mathcal{S}\setminus T_M$ is a connected component. The assertion is immediate if $[b, b']\subset I_C$. Assume that $[b, b']\subset A_C$. Since the image $\hat{f}_\textnormal{NME}(r(A))=r(A')$ for appropriately chosen component $A'\subset\mathcal{S}\setminus T_{M_0}$, it follows that $\hat{f}(r(A))\in \mathcal{L}_{M_0-1}(\mathcal{S})\cap\mathcal{L}_{M_0}(\mathcal{S})$, i.e. the root has been moved $d$ levels up in $\mathcal{G}$. Now observe that by the discrepancy of the branching point enumeration between the dendrites $\mathcal{S}$ and $\G$, there are branching points that have been moved up $d-1$ levels in $\mathcal{G}$. This follows from the fact that there are edges in $\mathcal{S}$ which are unions of two edges in $\mathcal{G}$ and hence, a branching point might have been moved two levels below its parent in $\G$ to remain a branching point in $\mathcal{S}$. Moreover, if a branching point has been moved $d-1$ levels up in $\G$, then its child has been moved $d-1$ levels up in $\G$.

        If both $b, b'$ have been moved $d$ levels up in $\G$, then $d_{\G}(\hat{f}_\textnormal{NME}(b), \hat{f}_\textnormal{NME}(b'))=4^d d_{\G}(b, b')$. If $b$ has been moved $d$ levels up in $\G$, but $b'$ has been moved $d-1$ levels up in $\G$, then $d_{\G}(\hat{f}_\textnormal{NME}(b), \hat{f}_\textnormal{NME}(b'))\geq (4^d+4^{d-1})d_{\G}(b, b')$. If both $b, b'$ have been moved $d-1$ levels up in $\G$, then $d_{\G}(\hat{f}_\textnormal{NME}(b), \hat{f}_\textnormal{NME}(b'))\geq 4^{d-1}d_{\G}(b, b')$. Since $d\geq 2$, the assertion follows.
    \end{proof}

    Fix $[b, b']\subset\mathcal{F}_n$ for some $n\in\mathbb{N}_1$ which is an edge in $\G$. There exists a tree $X_{3d}([b, b'])$ - a connected component of $\mathcal{F}_{n-1}\cup\mathcal{F}_n\cup\mathcal{F}_{n+1}$ such that the images $\hat{f}_\textnormal{NME}(b), \hat{f}_\textnormal{NME}(b')\in X_{3d}([b, b'])$. One can see that $X_{3d}$ is a full binary tree of height $3d$. Denote the set of its endpoints $\End(X_{3d}([b, b']))=\{e_1,\dots,e_{2^{3d}}\}$ and let $r_{[b, b']}\in B_{n-1}(\mathcal{S})$ be its root. Assume that $e_{2^{3d}}$ is an endpoint such that the interval $[b', e_{2^{3d}}]$ contains $r_{[b, b']}$. Divide $[b, b']$ into $2^{3d+1}$ subintervals of equal length with pairwise disjoint interiors, i.e. 
    $$[b, b']=[b, d_1]\cup[d_1, u_1]\cup[u_1, d_2]\cup\dots\cup[u_{2^{3d}-1}, d_{2^{3d}}]\cup[d_{2^{3d}}, b'].$$
    
    Let $P([b, b'])= \{b, d_1, u_1,\dots,d_{2^{3d}-1}, u_{2^{3d}-1}, d_{2^{3d}}, b'\}\subset [b, b']$ be the set of all endpoints of these subintervals. Define $\hat{f}_\textnormal{NME}(u_1)=\dots=\hat{f}_\textnormal{NME}(u_{2^{3d}-1})=r_{[b, b']}$ and $\hat{f}_\textnormal{NME}(d_k)=e_k$ for $k=1,\dots,2^{3d}$. Then extend $\hat{f}_\textnormal{NME}$ linearly on each of the subintervals in $[b, b']$. We obtain
    \begin{equation*}
        \left\{
        \begin{split}
            &\hat{f}_\textnormal{NME}([b, d_1])=[\hat{f}_\textnormal{NME}(b), e_1],\\
            &\hat{f}_\textnormal{NME}([d_k, u_k])=[r_{[b, b']}, e_k],\;k=1,\dots, 2^{3d}-1,\\
            &\hat{f}_\textnormal{NME}([u_{k}, d_{k+1}])=[r_{[b, b']}, e_{k+1}]\;k=1,\dots, 2^{3d}-1\\
            &\hat{f}_\textnormal{NME}([d_{2^{3d}}, b'])=[\hat{f}_\textnormal{NME}(b'), e_{2^{3d}}].
        \end{split}
        \right.
    \end{equation*}
    
    We will now define $\hat{f}_\textnormal{NME}$ on the family $\mathcal{J}_\textnormal{exit}$. By the definition, this family is composed of $2^M$ pairwise disjoint intervals $[p_n, q_n]$ with $g(p_n)=\hat{f}_\textnormal{NME}(p_n)\in\End(T_M)$. Moreover, the map $\hat{f}_\textnormal{NME}$ has already been defined on all $p_n, q_n$ for $n=1,\dots,2^M$. Fix an interval $[p_n, q_n]$ and let $X_{d}(g(p_n))\subset \mathcal{F}_1\cup \bigcup_{k=0}^{N'-1}[g^{k}(o'), r_k]$ denote the connected component such that $g(p_n)$ is its root, where 
    branching points $r_k$  were used to define $\mathcal{S}$.
    Denote $\End(X_{d}(g(p_n)))=\{e_1,\dots,e_K\}$. Divide $[p_n, q_n]$ into $2K+1$ subintervals of equal length with pairwise disjoint interiors, i.e.
    \begin{equation*}
        [p_n, q_n]=[p_n, u_1]\cup[u_1, d_1]\cup[d_1, u_2]\cup\dots\cup[u_K, d_K]\cup[d_K, q_n].
    \end{equation*}
    Let $P([p_n, q_n])=\{p_n, u_1, d_1,\dots,u_{K}, d_{K}, q_n\}\subset [p_n, q_n]$ be the set of all endpoints of these subintervals. Set $\hat{f}_\textnormal{NME}(u_1)=\dots=\hat{f}_\textnormal{NME}(u_{K})=g(q_n)$ and $\hat{f}_\textnormal{NME}(d_k)=e_k$ for $k=1,\dots,K$. Then extend $\hat{f}_\textnormal{NME}$ linearly on each of the subintervals in $[p_n, q_n]$. We obtain
    \begin{equation*}
        \left\{
        \begin{split}
            &\hat{f}_\textnormal{NME}([p_n, u_1])=[g(p_n), g(q_n)],\\
            &\hat{f}_\textnormal{NME}([u_k, d_k])=[g(q_n), e_k],\;k=1,\dots, K,\\
            &\hat{f}_\textnormal{NME}([d_{k}, u_{k+1}])=[g(q_n), e_{k}],\;k=1,\dots, K-1\\
            &\hat{f}_\textnormal{NME}([d_{K}, q_n])=[e_{K}, g(q_n)].
        \end{split}
        \right.
    \end{equation*}

    Note that for any $J\in\mathcal{J}_\textnormal{exit}$ we have $\hat{f}_\textnormal{NME}(J)\cap \G\setminus\mathcal{S}\neq\emptyset$, which will be crucial for the further steps of the construction. At this point the map $\hat{f}_\textnormal{NME}$ has been completely defined on $\mathcal{S}$.

    \begin{claim}\label{pietra_surjektywne}
        We have $\mathcal{S}\subset \hat{f}_\textnormal{NME}(\mathcal{S})$.
    \end{claim}
    \begin{proof}
        The following claim follows from Remark \ref{skeleton} and the definition of $g$. One can see that a straightforward consequence of the aforementioned observation is that $\hat{f}_\textnormal{NME}(\mathcal{F}_n)=\mathcal{F}_{n-1}\cup\mathcal{F}_n\cup\mathcal{F}_{n+1}$ for all $n\in\mathbb{N}_1$. Thus, the assertion follows.
    \end{proof}

    \begin{claim}\label{L-lipschitz_constants}
        The map $\hat{f}_\textnormal{NME}|_{\mathcal{S}}$ is $\sigma$-linear with the slope bounded from above on $\mathcal{S}$, that is $$\sup\{s_{\hat{f}_\textnormal{NME}}(I)\colon I\subset\mathcal{S}\textnormal{ - linearity interval}\}<\infty.$$
    \end{claim}
    \begin{proof}
        One can see that the map $\hat{f}_\textnormal{NME}|_{\mathcal{S}}$ is $\sigma$-linear by the definition. Let $B_\textnormal{adj}(\mathcal{S})$ be the set of all pairs $(b, b')$ for adjacent $b, b'\subset B(\mathcal{S})$. Then the set
        \begin{equation*}
            P'=P_g\cup \bigcup_{(b, b')\in B_\textnormal{adj}(\mathcal{S})} P([b, b'])\cup\bigcup_{n=1}^{2^M} P([p_n, q_n])
        \end{equation*}
        defines a countable dense union of free arcs in $\mathcal{S}$ such that $\hat{f}_\textnormal{NME}$ is linear on each of those arcs. Note that $B(\mathcal{S})\subset P_g\cup \bigcup_{(b, b')\in B_\textnormal{adj}(\mathcal{S})} P([b, b'])$ and also $\End(T_M)\subset P_g$, hence  $B(\G)\cap\mathcal{S}\subset P'$.
        
        Since $\hat{f}_\textnormal{NME}(T_{M})\subset T_{M+1}\cup \mathcal{F}_1$, the slope on $T_M$ is bounded by some positive constant $L_1=\diam(T_{M+1}\cup \mathcal{F}_1)$. Let $[b, b']\subset\mathcal{F}_{n}$ be an edge  for $n\in\mathbb{N}_1$. It follows that if $\diam([b, b'])=d_n$, then $\diam{(X_{3d}([b, b']))}\leq 2\sum_{k=0}^{3d}4^kd_n=\frac{2}{3}(4^{3d+1}-1)d_n$, where in the last inequality we take into account that in level $\mathcal{L}_{M}(\mathcal{S})$ there are edges that  are unions of edges from $\mathcal{L}_M\cup\mathcal{L}_{M+1}$ in $\mathcal{G}$. Moreover, in each $[b, b']$ there are exactly $2^{3d+1}$ linearity intervals of equal length. Thus, the slope on $\mathcal{S}\setminus T_M$ is bounded by $L_2=\frac{2^{3d+2}}{3}(4^{3d+1}-1)$. Thus, the slope $s_{\hat{f}_\textnormal{NME}}(I)<L=\max\{L_1, L_2\}$ for any linearity interval $I$ in $\mathcal{S}$.
    \end{proof}
    \begin{claim}\label{L-lipschitz_argument}
        The map $\hat{f}_\textnormal{NME}|_{\mathcal{S}}$ is $L$-Lipschitz continuous.
    \end{claim}
    \begin{proof}
        The assertion follows from the definition of the metric $d_{\G}$. Fix two points $x, y\in\mathcal{S}\setminus\End(\mathcal{S})$. The arc $[x, y]\subset\bigcup_{k=1}^n [p_{k}, p_{k+1}]$ for some $p_1,\dots,p_{n+1}\in P'$ with $p_k, p_{k+1}$ adjacent. Thus, since $$d_{\G}(x, y)=d_{\G}(x, p_1)+d_{\G}(p_1, p_2)+\dots d_{\G}(p_{n-1}, p_{n})+d_{\G}(p_{n}, y)=d(x, y)$$
        we have that
        \begin{equation*}
            \begin{split}
                d_{\G}(\hat{f}_\textnormal{NME}(x), \hat{f}_\textnormal{NME}(y))&=d_{\G}(\hat{f}_\textnormal{NME}(x), \hat{f}_\textnormal{NME}(p_1))+\dots+d_{\G}(\hat{f}_\textnormal{NME}(p_{n+1}), \hat{f}_\textnormal{NME}(y))\\
                &< L\cdot d_{\G}(x, y),
            \end{split}
        \end{equation*}
        where $L$ is an upper bound of the slope $s_{\hat{f}_\textnormal{NME}|_{\mathcal{S}}}$ of the $\sigma$-linear map $\hat{f}_\textnormal{NME}$. Now if $x, y\in\End(\mathcal{S})$, then there exist $(b_n)_{n=1}^\infty, (b'_n)_{n=1}^\infty\subset B(\mathcal{S})$ be such that $$\lim_{n\to\infty}b_n=x, \;\lim_{n\to\infty}b'_n=y.$$ One can see that for all $n\in\mathbb{N}$ we have
        \begin{equation*}
            \begin{split}
                d_{\G}(\hat{f}_\textnormal{NME}(x), \hat{f}_\textnormal{NME}(y))\leq& d_{\G}(\hat{f}_\textnormal{NME}(x), \hat{f}_\textnormal{NME}(b_n))
                +d_{\G}(\hat{f}_\textnormal{NME}(b'_n), \hat{f}_\textnormal{NME}(y))\\
                &\quad +L\cdot d_{\G}(b_n, b'_n)
            \end{split}
        \end{equation*}
        and hence $d_{\G}(\hat{f}_\textnormal{NME}(x), \hat{f}_\textnormal{NME}(y))\leq L \cdot d_{\G}(x, y)$.
    \end{proof}
    
    Recall that by Remark~\ref{rem:linearity}, linearity interval is maximal in the sense of inclusion interval on which the map is linear.
    
    \begin{claim}\label{claim_8}
        Each linearity interval $\hat{f}_\textnormal{NME}|_\mathcal{S}$-covers finitely many linearity intervals in $\mathcal{S}$ and for each linearity interval the number of $\hat{f}_\textnormal{NME}|_\mathcal{S}$-covered linearity intervals is bounded by some universal constant $S$.
    \end{claim}
    \begin{proof}
        Since $\hat{f}_\textnormal{NME}(T_M)\subset T_M\cup \mathcal{F}_1$, each linearity interval in $T_M$ can cover at most $S_1$ linearity intervals, where $S_1<\infty$ is the number of linearity intervals in $T_M\cup \mathcal{F}_1$. The same holds true for linearity intervals $I_C$, where $C\subset \mathcal{S}\setminus T_M$ is a connected component. Now fix any linearity interval in  any floor $\mathcal{F}_n$ for $n\in\mathbb{N}_1$. The image of each edge in $\mathcal{F}_n$ is contained in a connected component of $\mathcal{F}_{n-1}\cup\mathcal{F}_n\cup\mathcal{F}_{n+1}$,
        which has at most $2^{3d}$ edges. By the construction, each edge in $\mathcal{F}_n$ has at most $2^{3d+1}$ linearity intervals,
        so each such interval covers at most $S_2=4^{3d+1}$ linearity intervals.
        We put $S=\max\{S_1, S_2\}$ to complete the proof.
    \end{proof}

    We will now define $\hat{f}_\textnormal{NME}$ on $\G\setminus\mathcal{S}$.
    Let us start by defining $\hat{f}_\textnormal{NME}$ on $(B(\G)\cup\End(\G))\setminus\mathcal{S}$. One can see that there are exactly $N'$ components $I_0\cup A_0,\dots,I_{N'-1}\cup A_{N'-1}\subset \overline{\G\setminus \mathcal{S}}$, where $N'$ is the period of  the periodic point $o'$ (see Figure~\ref{fig:ConstructionS}). Recall that $I_k=[g^{k}(o'), r_k]$, and $r_k$ is the root of $A_k$. We set $$\hat{f}_\textnormal{NME}(r_k)=r_{k+1\;\textrm{mod } N'}.$$ 

    Let $\Tilde{\G}=\{0,\dots,N'-1\}\times\G$ be a product of $N'$ copies of the Gehman dendrite. We define a skew product $F$ on  $\Tilde{\G}$ by
    \begin{equation*}
        F(k, b_\omega)=\left\{\begin{split}
            &(k+1, b_\omega),\; k=0,\dots, N'-2\\
            &(0, b_{\omega\oplus 1}),\; k=N'-1,
        \end{split}\right.
    \end{equation*}
    where $k=0,\dots N'-1$ and $b_\omega\in B(\G)\cup\{b_\lambda\}$. Here, we define $\omega\oplus 1$ to be the binary expansion of the integer $\langle\omega\rangle+1\;(\textrm{mod } 2^n)$, where, for a binary word $\omega=\omega_0\ldots \omega_{n-1}\in\{0,1\}^n$, we interpret it as a binary expansion of the integer $\langle \omega\rangle=\omega_0 2^0+\omega_1 2^1+\ldots+\omega_{n-1}2^{n-1}$ with the least significant digit written first, and we always use $n$ digits. Moreover, we assume that $b_{\lambda\oplus 1}=b_\lambda$, i.e., the root of $\G$ is invariant under the $\oplus$-addition operator. One can see that the $\oplus$ operator is defined in such a way that if $[b_\omega, b_{\omega'}]$ is an edge, then $[b_{\omega\oplus 1}, b_{\omega'\oplus 1}]$ is also an edge. We then define $$F(k, [b_\omega, b_{\omega'}])=(k+1 \;\textrm{mod } N', [F(k, b_\omega), F(k, b_{\omega'})]).$$ By the choice of metric $d_{\G}$ this map is an isometry on $\{0,\dots,N'-1\}\times\G\setminus\End(\G)$. Hence, for any $(k,e)\in \{k\}\times\End(\G)$ the following limit exists
    \begin{equation*}
        F(k, e)=\lim_{n\to\infty}F(k, b_n),
    \end{equation*}
    where $(b_n)_{n=1}^\infty\subset B(\G)$ is a Cauchy sequence such that $\lim_{n\to\infty}b_n=e$. As a byproduct of this definition we immediately obtain that $F$ is continuous.

    For fixed $k$ let $h_{A_k}\colon\{k\}\times\G\to A_k$ be a natural homeomorphism with $h_{A_k}(k, b_\lambda)=r_{k}$. For each $k$ and for $b\in \{r_k\}\cup B(A_k)\cup\End(A_k)$ we define $$\hat{f}_\textnormal{NME}(b)=h_{A_{k+1\;\textrm{mod } N'}}\circ F\circ h_{A_k}^{-1}(k, b).$$
    Observe that if we set $b_\omega(k)=h_{A_k}(k, b_\omega)$, then the map $\hat{f}_\textnormal{NME}$ acts in the following way:
    \begin{equation*}
        \hat{f}_\textnormal{NME}(b_\omega(k))=\left\{
        \begin{split}
            &b_{\omega}(k+1),\; k=0,\dots N'-2,\\
            &b_{\omega\oplus 1}(0),\; k=N'-1.
        \end{split}\right.
    \end{equation*}

    Before we proceed with the next part of our construction, let us introduce the following notation: let $1^{(n)}=1\dots 1\in\{0, 1\}^n$ be the sequence of $n$ consecutive ones and similarly $0^{(n)}\in\{0, 1\}^n$ be the sequence of $n$ consecutive zeroes for $n\in\mathbb{N}$ and  in particular $1^{(0)}=0^{(0)}=\lambda$ is the empty word.

    Fix an edge $[b_\omega(k), b_{\omega'}(k)]\subset \overline{\G\setminus \mathcal{S}}$, where $\omega\in\{0, 1\}^n$ for some $n\in\mathbb{N}$ and $k\in\{0,\dots, N'-1\}$. Assume first that $k=N'-1$ and $\omega'$ is a sequence of ones $\omega'=1^{(n+1)}$. Observe that then $\omega'\oplus 1=0^{(n+1)}$. 

     Divide $[b_{1^{(n)}}(N'-1), b_{1^{(n+1)}}(N'-1)]$ into three subintervals of equal length with pairwise disjoint interiors, i.e. $$[b_{1^{(n)}}(N'-1), b_{1^{(n+1)}}(N'-1)]=[b_{1^{(n)}}(N'-1), j_1]\cup[j_1, j_2]\cup [j_2, b_{1^{(n+1)}}(N'-1)].$$ Let
    \begin{equation*}
        P([b_{1^{(n)}}(N'-1), b_{1^{(n+1)}}(N'-1)])=\{b_{1^{(n)}}(N'-1), j_1, j_2, b_{1^{(n+1)}}(N'-1)\}
    \end{equation*}
    be the set of all endpoints of these subintervals.
    First, let $n=0$ and set $$\hat{f}_\textnormal{NME}(j_1)=b_{0^{(2)}}(0), \hat{f}_\textnormal{NME}(j_2)=o'.$$ Then extend $\hat{f}_\textnormal{NME}$ linearly on each of the three subintervals inside $[b_{1^{(n)}}(N'-1), b_{1^{(n+1)}}(N'-1)]$. We obtain
     \begin{equation*}
        \left\{
        \begin{split}
            &\hat{f}_\textnormal{NME}([b_{\lambda}(N'-1), j_1])=[b_{\lambda}(0), b_{0^{(2)}}(0)],\\
            &\hat{f}_\textnormal{NME}([j_1, j_2])=[o', b_{0^{(2)}}(0)],\\
            &\hat{f}_\textnormal{NME}([j_2, b_{1}(N'-1)])=[o', b_{0}(0)].
        \end{split}
        \right.
    \end{equation*}

   Next, for $n=1,2, \ldots$, we repeat the same procedure, but with assigning the images
    \begin{equation*}
            \hat{f}_\textnormal{NME}(j_1)=b_{0^{(n+2)}}(0),\; \hat{f}_\textnormal{NME}(j_2)=b_{0^{(n-1)}}(0)
    \end{equation*}
    to the endpoints $j_1, j_2\in [b_{1^{(n)}}(N'-1), b_{1^{(n+1)}}(N'-1)]$. Then we extend $\hat{f}_\textnormal{NME}$ linearly on each of the three subintervals, obtaining:
    \begin{equation*}
        \left\{
        \begin{split}
            &\hat{f}_\textnormal{NME}([b_{1^{(n)}}(N'-1), j_1])=[b_{0^{(n)}}(0), b_{0^{(n+2)}}(0)],\\
            &\hat{f}_\textnormal{NME}([j_1, j_2])=[b_{0^{(n-1)}}(0), b_{0^{(n+2)}}(0)],\\
            &\hat{f}_\textnormal{NME}([j_2, b_{1}(N'-1)])=[b_{0^{(n-1)}}(0), b_{0^{(n+1)}}(0)].
        \end{split}
        \right.
    \end{equation*}
    
Next, we define the image $\hat{f}_\textnormal{NME}$ on $[b_{0^{(n)}}(0), b_{0^{(n+1)}}(0)]$ by setting a $3$-fold between the edges $[b_{0^{(n)}}(0), b_{0^{(n+1)}}(0)]$, $[\hat{f}_\textnormal{NME}(b_{0^{(n)}}(0)), \hat{f}_\textnormal{NME}(b_{0^{(n+1)}}(0))]$. Now assume that either $0<k< N'-1$ or $\omega'\neq 0^{(n+1)}, 1^{(n+1)}$ and let $S'$ be an odd integer such that 
$$S+3<S',$$ 
where the number $S$ is derived from Claim \ref{claim_8}. We define $\hat{f}_\textnormal{NME}$ on $[b_{\omega}(k), b_{\omega'}(k)]$ by setting an $S'$-fold between the edges $[b_{\omega}(k), b_{\omega'}(k)]$ and $[\hat{f}_\textnormal{NME}(b_{\omega}(k)), \hat{f}_\textnormal{NME}(b_{\omega'}(k))]$. 
Following previous approach, for each $[b_\omega(k), b_{\omega'}(k)]$ we define $P([b_\omega(k), b_{\omega'}(k)])$ to be the set of endpoints of linearity intervals in $[b_\omega(k), b_{\omega'}(k)]$.

     Lastly, we will define $\hat{f}_\textnormal{NME}$ on intervals $I_k$. Fix any $k=0,\dots, N'-1$ and let $e\in \End(T_{M_0})$ be such that $[e, g^{k+1}(o')]\subset\overline{\G\setminus T_{M_0}}$. As before, divide $I_k$ into three subintervals of equal length with pairwise disjoint interiors, i.e. 
    $$I_k=[g^{k}(o'), j_1]\cup[j_1, j_2]\cup [j_2, r_k].$$
    
    Let $P(I_k)=\{g^{k}(o'), j_1, j_2, r_k\}$ be the set of endpoints of these subintervals and define
    \begin{equation*}
    \hat{f}_\textnormal{NME}(j_1)=b_{0^{(1)}}(k+1\;\textrm{mod } N'), \;\hat{f}_\textnormal{NME}(j_2)=e.
    \end{equation*}
    To finalize the construction, extend $\hat{f}_\textnormal{NME}$ linearly on each subinterval of $I_k$, obtaining:
    \begin{equation*}
        \left\{
        \begin{split}
            &\hat{f}_\textnormal{NME}([g^{k}(o'), j_1])=[g^{k+1}(o'), b_{0^{(1)}}(k+1\;\textrm{mod } N')],\\
            &\hat{f}_\textnormal{NME}([j_1, j_2])=[e, b_{0^{(1)}}(k+1\;\textrm{mod } N')],\\
            &\hat{f}_\textnormal{NME}([j_2, r_k])=[e, r_{k+1\;\textrm{mod } N'}].
        \end{split}
        \right.
    \end{equation*}
    
    The construction of $\hat{f}_\textnormal{NME}$ has been completed.

    \begin{claim}
        The map $\hat{f}_\textnormal{NME}$ is a continous, $\sigma$-linear surjection. Hence, $(\G, \hat{f}_\textnormal{NME})$ is a TDS.
    \end{claim}
    \begin{proof}
        The above claim follows from reasoning similar to that presented in Claims \ref{pietra_surjektywne}, \ref{L-lipschitz_constants}, \ref{L-lipschitz_argument}.
    \end{proof}
    
    \begin{claim}\label{f_hat_markovian}
        The map $\hat{f}_\textnormal{NME}$ is Markovian.
    \end{claim}
    \begin{proof}
       We will prove it in a similar way as we did for the map $g\colon T_M\to T_M$. Let 
        \begin{equation*}
             P_{\hat{f}_\textnormal{NME}}=P'\cup \bigcup_{b, b'\in B_\textnormal{adj}(\G)} P([b, b']),
        \end{equation*}
        where $B_\textnormal{adj}(\G)$ is the set of all pairs of adjacent points $b, b'\in B(\G)$. 
         We will show that $\hat{f}_\textnormal{NME}$ is $P_{\hat{f}_\textnormal{NME}}$-Markovian.
        
          Note that $P_{\hat{f}_\textnormal{NME}}\cap\mathcal{S}=P'$ and $P'$ contain all sets $P([b_\omega(k), b_{\omega'}(k)])$ for $\omega'\in\{0, 1\}^{n+1}$.
        
        One can see that the set $P_{\hat{f}_\textnormal{NME}}$ is countable, invariant and does not contain any endpoint of $\G$. Indeed, we have that $\hat{f}_\textnormal{NME}(P_g\cup B(\G))\subset P_g\cup B(\G)$ and moreover $\hat{f}_\textnormal{NME}(P([b, b']))\subset B(\G)$, $\hat{f}_\textnormal{NME}(P([p_n, q_n]))\subset B(\G)$ for any $(b, b')\in B_\textnormal{adj}$ and $[p_n, q_n]\in\mathcal{J}_\textnormal{exit}$. The map $\hat{f}_\textnormal{NME}$ is clearly linear on each $\overline{C}$ for $C\subset\G\setminus (\End(\G)\cup P_{\hat{f}_\textnormal{NME}})$ (see also Claim~\ref{L-lipschitz_constants}). Moreover, we have that $P_{\hat{f}_\textnormal{NME}}\cap[b, b']$ is finite for every pair $(b, b')\in B_\textnormal{adj}$. 
       Connected components of $\G\setminus (\End(\G)\cup P_{\hat{f}_\textnormal{NME}})$ are  free arcs covering $\G\setminus \End(\G)$ and by definition 
        $\hat{f}_\textnormal{NME}(\End(\G))=\End(\G)$.
        Thus, ${\hat{f}_\textnormal{NME}}$ is Markovian with respect to $P_{\hat{f}_\textnormal{NME}}$.
    \end{proof}

     \begin{claim}\label{slope_hat_f_2}
        The slope $s_{\hat{f}_\textnormal{NME}}$ is bounded from below by $c>2$.
    \end{claim}
    \begin{proof}
        Assume first that $[p, q]$ is a $P_{\hat{f}_\textnormal{NME}}$-basic interval in $T_M$ and $[p, q]\notin \mathcal{J}_\textnormal{exit}$. Then $[p, q]$ is a $P_g$-basic interval, since in the construction of $\hat{f}_\textnormal{NME}$ we did not alter the action on such intervals. Thus, $s_{\hat{f}_\textnormal{NME}}([p, q])>2$ by Claim \ref{g_slope_2}.
        
        Assume now that $[p, q]\subset[p_n, q_n]\in\mathcal{J}_\textnormal{exit}$ is a $P_{\hat{f}_\textnormal{NME}}$-basic interval. One can see that $g([p, q])\subset g([p_n, q_n])\subset\hat{f}_\textnormal{NME}([p, q])$. Hence, $s_{\hat{f}_\textnormal{NME}}([p, q])>2$.
        
        Assume now that $[p, q]\subset [b_{1^{(n)}}(N'-1), b_{1^{(n+1)}}(N'-1)]$ for some $n\in\mathbb{N}$. It follows that $[b_{0^{(n)}}(0), b_{0^{(n+1)}}(0)]\subset \hat{f}_\textnormal{NME}([p, q])$ and $\diam([b_{0^{(n)}}(0), b_{0^{(n+1)}}(0)])=3\diam([p, q])$, as in the construction we've divided $[b_{1^{(n)}}(N'-1), b_{1^{(n+1)}}(N'-1)]$ into three subintervals of equal length. Thus, $s_{\hat{f}_\textnormal{NME}}([p, q])\geq 3$. The same reasoning applies to $P_{\hat{f}_\textnormal{NME}}$-basic intervals which are contained in the edge $[b_{0^{(n)}}(0), b_{0^{(n+1)}}(0)]$, as $\hat{f}_\textnormal{NME}$ acts on $[b_{0^{(n)}}(0), b_{0^{(n+1)}}(0)]$ as a $3$-fold.

        Assume now that $[p, q]\subset I_k$ for $k=0,\dots, N'-1$. Recall that $I_{k+1\;\textrm{mod } N'}\subset \hat{f}_\textnormal{NME}([p, q])$ and $\diam(I_{k+1\;\textrm{mod } N'})=\diam(I_k)$. Since $3\diam([p, q])=\diam(I_k)$ by construction, it follows that $s_{\hat{f}_{\textnormal{NME}}}([p, q])\geq 3$.

        Assume now that $[p, q]\subset\G\setminus\mathcal{S}$ is a $P_{\hat{f}_\textnormal{NME}}$-basic interval that has not been considered before. One can see that the map $\hat{f}_{\textnormal{NME}}$ acts on $[p, q]$ as an $S'$-fold and $S'>3$. Thus, the slope $s_{\hat{f}_\textnormal{NME}}([p, q])>3$.

        Assume now that $[p, q]\subset\mathcal{S}\setminus T_M$ is a $P_{\hat{f}_\textnormal{NME}}$-basic interval with $[p, q]\subset\mathcal{F}_n$ for $n\in\mathbb{N}_1$. It follows from Claim \ref{level_up} that $[b, b']\subset \hat{f}_\textnormal{NME}([p, q])$, where $[b, b']\subset\mathcal{F}_{n-1}$. Since $\diam([b, b'])\geq 4\diam([p, q])$, the slope $s_{\hat{f}_\textnormal{NME}}([p, q])\geq 4$. The proof is completed.
    \end{proof}
   
    \begin{claim}\label{f_hat_mixing}
        The map $\hat{f}_\textnormal{NME}$ is mixing.
    \end{claim}
    \begin{proof}
        Fix a nonempty open subset $I\subset\G$. Without loss of generality, assume that $I$ is an open subset of a $P_{\hat{f}_\textnormal{NME}}$-basic interval. Since $\hat{f}_\textnormal{NME}$ is linear on each $P$-basic interva and  $s_{\hat{f}_\textnormal{NME}}>2$ by Claim \ref{slope_hat_f_2}, there exists $n_0\in\mathbb{N}$ such that $(\hat{f}_\textnormal{NME})^{n_0}(I)$ contains at least $2$ points of $P_{\hat{f}_\textnormal{NME}}$, in particular $[p, q]\subset (\hat{f}_\textnormal{NME})^{n_0}(I)$ for some $P_{\hat{f}_\textnormal{NME}}$-basic interval $[p, q]$.

        We claim that there exists $n_1\in\mathbb{N}$ such that $(\hat{f}_\textnormal{NME})^{n_1}([p, q])$ contains a $P_g$-basic interval.  Assume that $[p, q]$ is not a $P_g$-basic interval. Image of any  $P_{\hat{f}_\textnormal{NME}}$-basic interval  contained in $T_M$, including intervals in $[p, q]\subset[p_n, q_n]\in\mathcal{J}_\textnormal{exit}$, covers $P_g$ interval in next iteration, so let us assume that $[p, q]\subset\G\setminus T_M$.

        If $[p, q]\subset\mathcal{F}_0$, then $\hat{f}_{\textnormal{NME}}([p, q])\subset T_M$ and if $[p, q]\subset\mathcal{F}_n$ for $n\geq 1$, then $(\hat{f}_\textnormal{NME})^{n-1}([p, q])\cap T_M\neq\emptyset$. 
        
        Finally, assume that $[p, q]\subset\G\setminus\mathcal{S}$. It follows that either $[p, q]\subset [b_{\omega}(k), b_{\omega'}(k)]$ or $[p, q]\subset I_k$ for some $\omega\in\{0, 1\}^n, \omega'\in\{0, 1\}^{n+1}, n\in\mathbb{N}$ and $k\in\{0,\dots, N'-1\}$. In the latter case, observe that $(\hat{f}_\textnormal{NME})^2([p, q])\cap T_M\neq\emptyset$, as we have that $I_{k+1\;\textrm{mod }  N'}\subset \hat{f}_{\textnormal{NME}}([p, q])$ and $\hat{f}_\textnormal{NME}(I_k)\cap T_M\neq \emptyset$ for any $k=0,\dots,N'-1$. If $[p, q]\subset [b_{\omega}(k), b_{\omega'}(k)]$, then $[b_{\tilde{\omega}}(m), b_{\tilde{\omega}'}(m)]\subset(\hat{f}_\textnormal{NME})^{N'2^n}([p, q])$, where $\tilde{\omega}\in\{0, 1\}^{n-1}, \tilde{\omega}'\in\{0, 1\}^n$ and $m\in\{0,\dots, N'-1\}$. By induction, there exists $l\in\mathbb{N}$ such that $[b_{\lambda}(N'-1), b_{1}(N'-1)]\subset(\hat{f}_\textnormal{NME})^l([p, q])$ and hence, $I_0\subset(\hat{f}_{\textnormal{NME}})^{l+1}([p, q])$, and this case has already been considered.
        Indeed, there exists $n_1\in\mathbb{N}$ such that $(\hat{f}_\textnormal{NME})^{n_1}([p, q])$ contains a $P_g$-basic interval.

        Since $g$ is exact on $T_M$  by Claim \ref{g_exact_markov} and we have that $g(J)\subset \hat{f}_\textnormal{NME}(J)$ for any $P_g$-basic interval $J$, then there exists $n_2\in\mathbb{N}$ such that $T_M\subset (\hat{f}_\textnormal{NME})^{n_0+n_1+n_2}([p, q])$. It follows that that $$T_M\cup\mathcal{F}_1\cup\bigcup_{k=0}^{N'-1}I_k\subset\hat{f}_\textnormal{NME}(T_M).$$

        Moreover, we have that $\mathcal{F}_{n-1}\cup\mathcal{F}_n\cup\mathcal{F}_{n+1}\subset\hat{f}_\textnormal{NME}(\mathcal{F}_n)$ for all $n\in\mathbb{N}_1$ (see Claim \ref{pietra_surjektywne}). Let 
        \begin{equation}\label{eq:B(n)}
        \mathcal{B}(n)=\{[b_{\omega}(k), b_{\omega'}(k)]\colon\;  \omega\in\{0, 1\}^{n-1}, \;\omega'\in\{0, 1\}^{n}, \; k=0,\dots, N'-1\}
        \end{equation}
        	be the set of all edges in $\G\setminus\mathcal{S}$ that are defined by a binary sequence $\omega$ of length $n\in\mathbb{N}_1$. One can see that $\mathcal{B}(n)\subset\hat{f}_\textnormal{NME}(\mathcal{B}(n))$. Furthermore, $\bigcup_{k=0}^{N'-1}I_k\subset \hat{f}_\textnormal{NME}(\bigcup_{k=0}^{N'-1}I_k)$ and $\bigcup_{k=0}^{N'-1}I_k\cup \mathcal{B}(1)\subset (\hat{f}_\textnormal{NME})^{2}(\bigcup_{k=0}^{N'-1}I_k)$, which follows straight from the definition of $\hat{f}_\textnormal{NME}$ on $I_k$.  Moreover, as the action of $\hat{f}_\textnormal{NME}$ follows the periodic orbit of length $N'$ in $\End(T_M)$ and the periodic action given by $\oplus$-addition operator on $\tilde{\G}$, we have that $\mathcal{B}(n+1)\subset(\hat{f}_\textnormal{NME})^{N'2^{n+1}}(\mathcal{B}(n))$. Thus, we have that $\mathcal{B}(n)\subset(\hat{f}_\textnormal{NME})^{k(n)}(\mathcal{B}(1))$, where $k(n)=\sum_{l=2}^n N'2^{l}$ for  $n\in\mathbb{N}_2$. Therefore, one can see that
        \begin{equation*}
            T_M\cup\bigcup_{m=M+1}^{M+n+2} \mathcal{L}_m\subset(\hat{f}_\textnormal{NME})^{k(n+1)+3}(T_M),
        \end{equation*}
        where $n\in\mathbb{N}_1$. Observe that for all $n\in\mathbb{N}_1$ we have $T_{M+n}\subset\hat{f}_{\textnormal{NME}}(T_{M+n})$ and also $$T_{M+n+2}\subset(\hat{f}_{\textnormal{NME}})^{n_0+n_1+n_2+k(n+1)+3}(I).$$ Hence, the map $\hat{f}_\textnormal{NME}$ is mixing.
    \end{proof}
     \begin{claim}\label{distance_NME}
        The distance $d_{\sup}(f, \hat{f}_\textnormal{NME})<\gamma$.
    \end{claim}
    \begin{proof}
    Observe first that $\hat{f}_\textnormal{NME}(x)=g(x)$ for all $x\in\overline{T_M\setminus\bigcup_{n=1}^{2^M}[p_n, q_n]}$ and hence, all estimates for $g$ apply also to $\hat{f}_\textnormal{NME}$. Thus,  by Claim \ref{diam_g_pq} we have that the distance $d(f(x), \hat{f}_\textnormal{NME}(x))< 9\epsilon$ for $x\in\overline{T_M\setminus\bigcup_{n=1}^{2^M}[p_n, q_n]}$.
    
    Let now $y\in [p_n, q_n]\in\mathcal{J}_\textnormal{exit}$. One can see that $g([p_n, q_n])\subset \hat{f}_\textnormal{NME}([p_n, q_n])$ and moreover $\hat{f}_\textnormal{NME}([p_n, q_n])\setminus g([p_n, q_n])\subset \G\setminus T_M$. By the choice of $M$ we have $\diam(C)<\delta$ for every connected component $C\subset \overline{\G\setminus T_M}$. We have that
    $$
        \begin{aligned}
            d(f(y), \hat{f}_\textnormal{NME}(y))&\leq d(f(y), g(y))+d(g(y), \hat{f}_\textnormal{NME}(y))\\
            &\leq 9\epsilon+\diam(C)+\diam(X_{[p_n, q_n]}),
        \end{aligned}
    $$
    which yet again follows from Claim \ref{diam_g_pq}. Since the diameter $\diam(X_{[p_n, q_n]})\leq \epsilon+4\delta$, we have that $d(f(y), \hat{f}_\textnormal{NME}(y))<15\epsilon$.

    Let now $C\subset \overline{\G\setminus T_M}$ be a connected component and $z\in C$. By the choice of $M, M_0$ there exists a connected component $Y\subset\G\setminus T_{M_0}$ such that $f(C)\subset Y$. Moreover, we have that $\hat{f}_\textnormal{NME}(C)\subset Y$ for the same connected component $Y$. Thus, we have that $d(f(z), \hat{f}_\textnormal{NME}(z))< 2\epsilon$.
    Combining all these arguments altogether,  we see that $d_{\sup}(f, \hat{f}_\textnormal{NME})<\gamma$.
    
    \end{proof}
    
    \begin{claim}\label{hat_f_NME_not_positive_recurrent}
        The Markovian graph $G_{\hat{f}_\textnormal{NME}}$ is not positive recurrent.
    \end{claim}
    \begin{proof}
        Firstly, let us estimate the entropy $h(G_{\hat{f}_\textnormal{NME}})$. One can see that for any $v\in V(G_{\hat{f}_\textnormal{NME}})$ the number of vertices $w\in V(G_{\hat{f}_\textnormal{NME}})$ such that $(v, w)\in E(G_{\hat{f}_\textnormal{NME}})$ is finite and bounded by $S'$, which follows straight from the choice of the parameter $S'$. Thus, by Theorem \ref{different_uv_same_limit_entropy} we have that
        \begin{equation*}
            h(G_{\hat{f}_\textnormal{NME}})=\lim_{n\to\infty}\frac{1}{n}\log(p_{vw}^{G_{\hat{f}_\textnormal{NME}}}(n))\leq \lim_{n\to\infty}\frac{\log (S')^n}{n}=\log S'.
        \end{equation*}
         Let now $H_n$ be a subgraph of $G_{\hat{f}_\textnormal{NME}}$ defined by taking all vertices representing $P_{\hat{f}_\textnormal{NME}}$-basic intervals inside element of $\mathcal{B}(n)$ for $n\in\mathbb{N}$ (see \eqref{eq:B(n)} in Claim \ref{f_hat_mixing} for the definition of $\mathcal{B}(n)$). By the construction, in each edge $[b, b']\subset \mathcal{B}(n)$ there are either $3$ or $S'$ $P_{\hat{f}_\textnormal{NME}}$-basic intervals. 
         
         let $H=\bigcup_{n=0}^\infty H_n$ and for each $n$, let
          $v_n$ be a vertex representing  a $P_{\hat{f}_\textnormal{NME}}$-basic interval inside the edge $[b_{1^{(n)}}(N'-1), b_{1^{(n+1)}}(N'-1)]\subset \mathcal{B}(n)$.  One can see that $H$ is strongly connected, which follows from the definition of $\hat{f}_\textnormal{NME}$. Furthermore, by Theorem \ref{different_uv_same_limit_entropy} we have that
        \begin{eqnarray*}
            h(H)&\geq& \lim_{n\to\infty}\frac{1}{N'2^{n+1}}\log(p_{v_{n}v_{n}}^{H_{n}}(N'2^{n+1}))\\
            &=&\lim_{n\to\infty}\frac{N'2\log 3}{N'2^{n+1}}+\frac{N'2^{n+1}-2}{N'2^{n+1}}  \log S'=\log S',
        \end{eqnarray*} as we consider only the loops of $v_n$ of length $N'2^{n+1}$ in each $H_n$ and we have that there are exactly $3^{N'2}(S')^{N'2^{n+1}-2}$ such loops that are distinct.  Thus, the graph $G_{\hat{f}_\textnormal{NME}}$ is not strongly positive recurrent by Theorem \ref{Ruette_subgraphs_SPR}. Moreover,
        $h(G_{\hat{f}_\textnormal{NME}})=\log S'$.
        
        By Theorem~\ref{different_uv_same_limit_entropy} we know that $R(G_{\hat{f}_\textnormal{NME}})^{-1}= S'$. Take the constant vector $\mathbf 1=(1)_{v\in V(G_{\hat{f}_\textnormal{NME}})}$, which is strictly positive and since the out-degree of every vertex is bounded by $S'$ we obtain that for every $v$,
        \begin{equation}\label{eq:sub}
\sum_{w}a_{vw}\mathbf 1_{w}\le S'=R(G_{\hat{f}_\textnormal{NME}})^{-1}\mathbf 1_{v}
        \end{equation}
        so the vector  $\mathbf 1$ is $R(G_{\hat{f}_\textnormal{NME}})$-subinvariant.
        But there are $v\in V(G_{\hat{f}_\textnormal{NME}})$ with the number of outgoing edges $\sum_{w\in V(G_{\hat{f}_\textnormal{NME}})}a_{vw}<S'$ so $\mathbf 1$ is not $R(G_{\hat{f}_\textnormal{NME}})$-invariant.
        Then by Theorem~\ref{Seneta:Rec} $G_{\hat{f}_\textnormal{NME}}$ is not positive recurrent (in fact $G_{\hat{f}_\textnormal{NME}}$ is transient).
        This completes the proof.
    \end{proof}

    \begin{claim}\label{entropy_endpoints_smaller}
        The system $(\G, \hat{f}_\textnormal{NME})$ does not admit a measure of maximal entropy.
    \end{claim}
    \begin{proof}
         We first examine the entropies of invariant subsystems in $(\G, \hat{f}_\textnormal{NME})$. We do so in the view of Theorem \ref{lipschitz_entropy_graf}.
        
         Observe that if a $P_{\hat{f}_\textnormal{NME}}$-basic interval $[p, q]\cap \bigcup_{n=1}^\infty \mathcal{B}(n)=\emptyset$, then the slope $s_{\hat{f}_\textnormal{NME}}([p, q])\leq L$ (see Claims \ref{L-lipschitz_constants}, \ref{f_hat_mixing} for details on $L$ and $\mathcal{B}(n)$). Moreover, if a $P_{\hat{f}_\textnormal{NME}}$-basic interval $[p, q]$ is contained in either $[b_{1^{(n)}}(N'-1), b_{1^{(n+1)}}(N'-1)]$, $[b_{0^{(n)}}(0), b_{0^{(n+1)}}(0)]$ for some $n\in\mathbb{N}$, then the slope $s_{\hat{f}_\textnormal{NME}}([p, q])<36$, as we have that
        \begin{equation*}
            \diam(\hat{f}_\textnormal{NME}([p, q]))< 3\cdot 4\diam([b_{0^{(n)}}(0), b_{0^{(n+1)}}(0)])
        \end{equation*}
        and also
        \begin{equation*}
            \begin{split}
                \diam([b_{1^{(n)}}(N'-1), b_{1^{(n+1)}}(N'-1)])&=\diam([b_{0^{(n)}}(0), b_{0^{(n+1)}}(0)])\\
                &=3\diam([p, q]).
            \end{split}
        \end{equation*}
        Recall that $L>36$ by the choice of parameter $d=M-M_0$ and the parameter $S'$ has been chosen so that $L<S'$. We used this $S'$ to set $S'$-folds on $$\bigcup_{n=0}^\infty  \mathcal{B}(n+1)\setminus [b_{1^{(n)}}(N'-1), b_{1^{(n+1)}}(N'-1)]\cup [b_{0^{(n)}}(0), b_{0^{(n+1)}}(0)].$$
        
        Observe that if we define the map $\hat{f}'_\textnormal{NME}$ to be exactly the same as $\hat{f}_\textnormal{NME}$ on $\mathcal{S}$, but on $\G\setminus\mathcal{S}$ instead of setting $S'$-folds we would set $1$-folds (or any $K$-folds with an odd $K<L$), then $\hat{f}'_\textnormal{NME}=\hat{f}_\textnormal{NME}$ on $B(\G)$, as we would not alter the action on the branching points. Consequently, $\hat{f}'_\textnormal{NME}=\hat{f}_\textnormal{NME}$ on $\End(\G)$, which is an invariant subset for both maps. Moreover, $\hat{f}'_\textnormal{NME}$ is $L$-Lipschitz, which follows from a similar argument to the one presented in the proof of Claim \ref{L-lipschitz_argument}. Hence, we have that
        \begin{equation*}
            \htop(\End(\G), \hat{f}_\textnormal{NME}|_{\End(\G)})=\htop(\End(\G), \hat{f}'_\textnormal{NME}|_{\End(\G)})\leq\log L.
        \end{equation*}
        On the other hand, since $h(G_{\hat{f}_\textnormal{NME}})=\log S'$ and $\hat{f}_\textnormal{NME}$ is a mixing Markovian map, we have that $\htop(\G, \hat{f}_\textnormal{NME})=\log S'$ by Corollary \ref{entropia_odwrocone_corollary}.
        
         Therefore, the system $(\G, \hat{f}_\textnormal{NME})$ must not admit any measure of maximal entropy, which follows from Corollary \ref{MME_FINAL_CONDITION}
         combined with Claim~\ref{hat_f_NME_not_positive_recurrent}.
    \end{proof}

\begin{step}
    Modifying the map $\hat{f}_\textnormal{NME}$ to $\hat{f}_\textnormal{MME}$.
\end{step}

Since $(\G, \hat{f}_\textnormal{NME})$ is mixing, the graph $G_{\hat{f}_\textnormal{NME}}$ is strongly connected. Thus, there exists a finite loop $\overline{L}=(v_n)_{n=0}^{K-1}\subset V(G_{\hat{f}_\textnormal{NME}})$ for some $K\in\mathbb{N}_1$ with $\{(v_n, v_{n+1\;\textrm{mod }K})\colon \; n=0,\dots, K-1\}\subset E(G_{\hat{f}_\textnormal{NME}})$. Let $S''$ be an odd integer such that $S'<S''$. Let $I_n$ be the $P_{\hat{f}_\textnormal{NME}}$-basic interval identified with vertex $v_n\in C$ for each $n$. 

Define $\hat{f}_\textnormal{MME}(x)=\hat{f}_\textnormal{NME}(x)$ for all $x\in\overline{\G\setminus\bigcup_{n=0}^{K-1}I_n}$. Then in $I_n$ define  $\hat{f}_\textnormal{MME}$ as an $S''$-fold between $I_n, \hat{f}_\textnormal{NME}(I_n)$, where $n=0,\dots, K-1$. We have to extend the partition by including newly obtained linearity intervals. If $\hat{f}_\textnormal{NME}(I_n)=[x, y]$, then denote by $P(I_n)=\{z\in I_n\colon\; \hat{f}_\textnormal{MME}(z)\in\{x, y\}\}$. One can see that there are exactly $S''+1$ such points and those are the endpoints of subintervals in $I_n$ on which the map $\hat{f}_\textnormal{MME}$ is linear. Let 
$$
P_{\hat{f}_\textnormal{MME}}=P_{\hat{f}_\textnormal{NME}}\cup\bigcup_{n=0}^{K-1}P(I_n).
$$
One can see that the map $\hat{f}_\textnormal{MME}$ is mixing and $P_{\hat{f}_\textnormal{MME}}$-Markovian, which follows from similar arguments to those presented in Claims \ref{f_hat_markovian}, \ref{f_hat_mixing}. Moreover, the entropy $h(G_{\hat{f}_\textnormal{MME}})\leq \log (S'+S'')$. Indeed, the graph $G_{\hat{f}_\textnormal{MME}}$ is strongly connected and we use Theorem \ref{different_uv_same_limit_entropy} in the same way as in Claim \ref{hat_f_NME_not_positive_recurrent}, where we note that now there are at most $S'+ S''$ outgoing edges from any vertex $v\in V(G_{\hat{f}_\textnormal{MME}})$.

\begin{claim}
    The distance $d_{\sup}(f, \hat{f}_\textnormal{MME})<\gamma$.
\end{claim}
\begin{proof}
    The following claim is a consequence of Claim \ref{distance_NME} and hence, our reasoning here will be similar to its proof. Observe that if $x\in \overline{\G\setminus\bigcup_{n=0}^{K-1} I_n}$, then $$d(f(x), \hat{f}_\textnormal{MME}(x))=d(f(x), \hat{f}_\textnormal{NME}(x))<\gamma.$$ Let $x\in \bigcup_{n=0}^{K-1} I_n$. We have that
    \begin{equation*}
        d(f(x), \hat{f}_\textnormal{MME}(x))\leq d(f(x), \hat{f}_\textnormal{NME}(x))+d(\hat{f}_\textnormal{NME}(x), \hat{f}_\textnormal{MME}(x))
    \end{equation*}
    and hence, we need to estimate the distance $d(\hat{f}_\textnormal{NME}(x), \hat{f}_\textnormal{MME}(x))$, since we have that $d(f(x), \hat{f}_\textnormal{NME}(x))<15\epsilon$. Assume first that $x\in[p, q]$, where the interval $[p, q]\subset \overline{T_M\setminus\bigcup_{n=1}^{2^M}[p_n, q_n]}$ is a $P_g$-basic interval. Then 
    \begin{equation*}
        d(\hat{f}_\textnormal{NME}(x), \hat{f}_\textnormal{MME}(x))\leq \diam(X_{[p, q]})\leq \epsilon+4\delta\leq 5\epsilon.
    \end{equation*}
    Assume now that $x\in[p_n, q_n]$ for some $n\in\{0,\dots, 2^M\}$, then 
    \begin{equation*}
        d(\hat{f}_\textnormal{NME}(x), \hat{f}_\textnormal{MME}(x))\leq \diam(X_{[p, q]})+\diam( C)\leq \epsilon+5\delta\leq 6\epsilon,
    \end{equation*}
    where $ C\subset \overline{\G\setminus T_M}$ is a connected component. Finally, assume that $x\in\overline{\G\setminus T_{M}}$. Then we have
    \begin{equation*}
        d(\hat{f}_\textnormal{NME}(x), \hat{f}_\textnormal{MME}(x))\leq \diam( D)\leq 2\epsilon,
    \end{equation*}
    where $ D\subset \overline{\G\setminus T_{M_0}}$ is a connected component. Thus $d(f(x), \hat{f}_\textnormal{MME}(x))\leq 21\epsilon$. Hence, $d(f(x), \hat{f}_\textnormal{MME}(x))<\gamma$.
\end{proof}

Recall that $t^{W}_{uv}(n)$ for a subgraph $W\subset G$ was defined in Definition~\ref{def:tuv}.
\begin{claim}
    Let $W( \overline{L})$ be a graph composed of vertices associated to $P_{\hat{f}_\textnormal{MME}}$-basic intervals in $\bigcup_{n=0}^{K-1} I_n$. We have that
    \begin{equation*}
        \limsup_{n\to\infty}\frac{1}{n}\log t^{W( \overline{L})^c}_{uv}(n)\leq h(W( \overline{L})).
    \end{equation*}
\end{claim}
\begin{proof}
    Fix $u, v\in W( \overline{L})$. One can see that there are at most $S'$ edges that go from $u$ to vertices in $W(\overline{L})^c$ and there is only one edge that goes from  any of the vertices in $W( \overline{L})^c$ to v. Similarly, there are at most $S'$ outgoing edges from any vertex in $W( \overline{L})^c$. Thus, we have that
    \begin{equation*}
        \limsup_{n\to\infty}\frac{1}{n}\log t^{W(\overline{L})^c}_{uv}(n)\leq \limsup_{n\to\infty}\frac{1}{n}\log (S')^{n-1}=\log S'.
    \end{equation*}
    On the other hand, for any vertex in $W( \overline{L})$ there are exactly $S''$ outgoing edges, as we introduced $S''$-folds on the $P_{\hat{f}_\textnormal{NME}}$-basic intervals associated to the vertices forming the loop $ \overline{L}$. Since $W( \overline{L})$ is connected, by Theorem \ref{different_uv_same_limit_entropy} for any vertex $w\in W( \overline{L})$ we have
    \begin{equation*}
        h(W( \overline{L}))=\lim_{n\to\infty}\frac{1}{nK}\log(p_{ww}^{W( \overline{L})}(nK))=\lim_{n\to\infty}\frac{1}{nK}\log(S'')^{nK-1}=\log S''.
    \end{equation*}
    Thus, the assertion follows.
\end{proof}
Thus, the graph $G_{\hat{f}_\textnormal{MME}}$ is positive recurrent by Theorem \ref{SPR_high_local_entropy} and $$\log S''\leq h(G_{\hat{f}_\textnormal{MME}})\leq \log (S'+S'').$$ 
Additionally, the topological entropy $\htop(\End(\G), \hat{f}_\textnormal{MME}|_{\End(\G)})< \htop(\G, \hat{f}_\textnormal{MME})$, which follows from a similar argument as in Claim \ref{entropy_endpoints_smaller}. Therefore, the system $(\G, \hat{f}_\textnormal{MME})$ admits a unique measure of maximal entropy by Corollary \ref{MME_FINAL_CONDITION}.
\end{proof}

\section{Discussion of the results}
The constructions above are tailored to the Gehman dendrite, but several ingredients are not specific to the binary branching model. The argument mainly uses the existence of arbitrarily fine self-similar structure in reconstructing the map over the dendrite $\mathcal{S}$, a closed endpoint set for it to become a negligible boundary subsystem, and a uniform bound on the order of branch points used in Claims \ref{L-lipschitz_constants} and \ref{claim_8}. For this reason, we expect analogous approximation phenomena for a broader class of dendrites. A complete formulation would require additional arguments, and we leave this outside the present paper.

\begin{question}
    Let $(X, f)$ be a transitive topological dynamical system on a dendrite $X$ whose set of branch points is discrete and of bounded order.
    Can $(X,f)$ be approximated by a map with an MME and by a map without an MME?
\end{question}

One limitation of the present construction is that it does not attempt to preserve entropy. In order to obtain mixing, which is the key input for the MME assertions in Section \ref{MMEs_assertions}, we force local omnidirectional expansion. This increases the topological entropy of the approximating model and leads to the following natural question.

\begin{question}
    Let $(X, f)$ be a transitive topological dynamical system on the Gehman dendrite $X$. Does there exist a dynamical system $(X, g_\epsilon)$ that admits a measure of maximal entropy and both $d_{\sup}(f, g_\epsilon)<\epsilon$ and $|\htop(X, f)-\htop(X, g_\epsilon)|<\epsilon$ for any initially chosen parameter $\epsilon>0$?
\end{question}

Finally, we conclude the paper by asking another interesting question.

\begin{question}
    Does there exist a characterization of Markovian dynamical systems that necessarily admit an MME? In other words, does there exist a dynamical property of a Markovian system that forces its Markovian graph to be positive recurrent?
\end{question}

\section{Acknowledgements}
The authors are grateful to Dominik Kwietniak, as many of the ideas, notations, and techniques in this paper developed from previous joint work.

This work was partially supported by project
No.~CZ.02.01.01/00/23\_021/0008759 supported by EU funds, through the Operational Programme Johannes Amos Comenius.

\bibliographystyle{plain}
\bibliography{references}

@book {Seneta,
	AUTHOR = {Seneta, E.},
	TITLE = {Non-negative matrices and {M}arkov chains},
	SERIES = {Springer Series in Statistics},
	NOTE = {Revised reprint of the second (1981) edition [Springer-Verlag,
	New York; MR0719544]},
	PUBLISHER = {Springer, New York},
	YEAR = {2006},
	PAGES = {xvi+287},
	ISBN = {978-0387-29765-1; 0-387-29765-0},
	MRCLASS = {60-02 (15A48 60J10)},
	MRNUMBER = {2209438},
}

@article {Ruette,
    AUTHOR = {Ruette, Sylvie},
     TITLE = {On the {V}ere-{J}ones classification and existence of maximal
              measures for countable topological {M}arkov chains},
   JOURNAL = {Pacific J. Math.},
  FJOURNAL = {Pacific Journal of Mathematics},
    VOLUME = {209},
      YEAR = {2003},
    NUMBER = {2},
     PAGES = {366--380},
      ISSN = {0030-8730,1945-5844},
   MRCLASS = {37B10 (37A35 37A50 37B20 60J10)},
  MRNUMBER = {1978377},
MRREVIEWER = {J\'er\^ome\ Buzzi},
       DOI = {10.2140/pjm.2003.209.365},
}

@article {Vere,
    AUTHOR = {Vere-Jones, D.},
     TITLE = {Geometric ergodicity in denumerable {M}arkov chains},
   JOURNAL = {Quart. J. Math. Oxford Ser. (2)},
  FJOURNAL = {The Quarterly Journal of Mathematics. Oxford. Second Series},
    VOLUME = {13},
      YEAR = {1962},
     PAGES = {7--28},
      ISSN = {0033-5606,1464-3847},
   MRCLASS = {60.65},
  MRNUMBER = {141160},
MRREVIEWER = {J.\ S.\ Griffin, Jr.},
       DOI = {10.1093/qmath/13.1.7},
}

@article {Gurevich_infinite_graphs_no_MME,
    AUTHOR = {Gurevich, B. M.},
     TITLE = {Topological entropy of a countable {M}arkov chain},
   JOURNAL = {Dokl. Akad. Nauk SSSR},
  FJOURNAL = {Doklady Akademii Nauk SSSR},
    VOLUME = {187},
      YEAR = {1969},
     PAGES = {715--718},
      ISSN = {0002-3264},
   MRCLASS = {60.65 (28.00)},
  MRNUMBER = {263162},
MRREVIEWER = {I.\ Csisz\'{a}r},
}

@article {Gurevich_MME_iff,
    AUTHOR = {Gurevich, B. M.},
     TITLE = {Shift entropy and {M}arkov measures in the space of paths of a
              countable graph},
   JOURNAL = {Dokl. Akad. Nauk SSSR},
  FJOURNAL = {Doklady Akademii Nauk SSSR},
    VOLUME = {192},
      YEAR = {1970},
     PAGES = {963--965},
      ISSN = {0002-3264},
   MRCLASS = {28.70 (54.00)},
  MRNUMBER = {268356},
MRREVIEWER = {E.\ M.\ Klimko},
}

@article {Gurevich_Zargaryan,
    AUTHOR = {Gurevich, B. M. and Zargaryan, A. S.},
     TITLE = {Conditions for the existence of a maximal measure for a
              countable symbolic {M}arkov chain},
   JOURNAL = {Vestnik Moskov. Univ. Ser. I Mat. Mekh.},
  FJOURNAL = {Vestnik Moskovskogo Universiteta. Seriya I. Matematika,
              Mekhanika},
      YEAR = {1988},
    NUMBER = {5},
     PAGES = {14--18, 103},
      ISSN = {0579-9368},
   MRCLASS = {58F11 (28D05)},
  MRNUMBER = {1051173},
}

@article {Charatonik,
    AUTHOR = {Ar\'evalo, Daniel and Charatonik, W\l odzimierz J. and
              Pellicer Covarrubias, Patricia and Sim\'on, Likin},
     TITLE = {Dendrites with a closed set of end points},
   JOURNAL = {Topology Appl.},
  FJOURNAL = {Topology and its Applications},
    VOLUME = {115},
      YEAR = {2001},
    NUMBER = {1},
     PAGES = {1--17},
      ISSN = {0166-8641,1879-3207},
   MRCLASS = {54F15 (54C10 54F50)},
  MRNUMBER = {1840729},
MRREVIEWER = {Eldon\ J.\ Vought},
       DOI = {10.1016/S0166-8641(00)00058-4},
}

@incollection {Salama_entropy_R,
    AUTHOR = {Salama, Ibrahim A.},
     TITLE = {On the recurrence of countable topological {M}arkov chains},
 BOOKTITLE = {Symbolic dynamics and its applications ({N}ew {H}aven, {CT},
              1991)},
    SERIES = {Contemp. Math.},
    VOLUME = {135},
     PAGES = {349--360},
 PUBLISHER = {Amer. Math. Soc., Providence, RI},
      YEAR = {1992},
      ISBN = {0-8218-5146-2},
   MRCLASS = {54H20 (28D05)},
  MRNUMBER = {1185102},
MRREVIEWER = {Manfred\ Denker},
       DOI = {10.1090/conm/135/1185102},
}

@article {Entropy_paradox_drzewa,
    AUTHOR = {Hara\'nczyk, Grzegorz and Kwietniak, Dominik and Oprocha,
              Piotr},
     TITLE = {Topological structure and entropy of mixing graph maps},
   JOURNAL = {Ergodic Theory Dynam. Systems},
  FJOURNAL = {Ergodic Theory and Dynamical Systems},
    VOLUME = {34},
      YEAR = {2014},
    NUMBER = {5},
     PAGES = {1587--1614},
      ISSN = {0143-3857,1469-4417},
   MRCLASS = {37F25 (37B20)},
  MRNUMBER = {3255434},
       DOI = {10.1017/etds.2013.6},
}

@misc{Entropy_paradox_dendryty,
  doi = {10.48550/ARXIV.2504.05121},
  author = {Kwietniak,  Dominik and Oprocha,  Piotr and Tomaszewski,  Jakub},
  title = {On entropy of pure mixing maps on dendrites},
  publisher = {arXiv},
  year = {2025},
  copyright = {arXiv.org perpetual,  non-exclusive license}
}

@article {Parry,
    AUTHOR = {Parry, William},
     TITLE = {Intrinsic {M}arkov chains},
   JOURNAL = {Trans. Amer. Math. Soc.},
  FJOURNAL = {Transactions of the American Mathematical Society},
    VOLUME = {112},
      YEAR = {1964},
     PAGES = {55--66},
      ISSN = {0002-9947,1088-6850},
   MRCLASS = {60.65},
  MRNUMBER = {161372},
MRREVIEWER = {H.\ P.\ Edmundson},
       DOI = {10.2307/1994009},
       URL = {https://doi.org/10.2307/1994009},
}

@article {Misiurewicz_hausdorff,
    AUTHOR = {Misiurewicz, Micha\l{}},
     TITLE = {On Bowen's definition of topological entropy},
   JOURNAL = {Discrete Contin. Dyn. Syst.},
  FJOURNAL = {Discrete and Continuous Dynamical Systems. Series A},
    VOLUME = {10},
      YEAR = {2004},
    NUMBER = {3},
     PAGES = {827--833},
      ISSN = {1078-0947,1553-5231},
   MRCLASS = {37B40 (37C45 54C70 54F45)},
  MRNUMBER = {2018882},
MRREVIEWER = {Michael\ Hurley},
       DOI = {10.3934/dcds.2004.10.827},
       URL = {https://doi.org/10.3934/dcds.2004.10.827},
}

@article {Alseda,
    AUTHOR = {Alsed\`a, Ll. and del R\'io, M. A. and Rodr\'iguez, J. A.},
     TITLE = {A splitting theorem for transitive maps},
   JOURNAL = {J. Math. Anal. Appl.},
  FJOURNAL = {Journal of Mathematical Analysis and Applications},
    VOLUME = {232},
      YEAR = {1999},
    NUMBER = {2},
     PAGES = {359--375},
      ISSN = {0022-247X,1096-0813},
   MRCLASS = {54H20 (54C70)},
  MRNUMBER = {1683124},
MRREVIEWER = {Romeo\ F.\ Thomas},
       DOI = {10.1006/jmaa.1999.6277},
       URL = {https://doi.org/10.1006/jmaa.1999.6277},
}

@article {Positive_entropy_Gehman,
    AUTHOR = {Dirb\'{a}k, Mat\'{u}\v{s} and Snoha, L'ubom\'{\i}r and
              \v{S}pitalsk\'{y}, Vladim\'{\i}r},
     TITLE = {Minimality, transitivity, mixing and topological entropy on
              spaces with a free interval},
   JOURNAL = {Ergodic Theory Dynam. Systems},
  FJOURNAL = {Ergodic Theory and Dynamical Systems},
    VOLUME = {33},
      YEAR = {2013},
    NUMBER = {6},
     PAGES = {1786--1812},
      ISSN = {0143-3857,1469-4417},
   MRCLASS = {37B40 (37A25)},
  MRNUMBER = {3122152},
MRREVIEWER = {Xueting\ Tian},
       DOI = {10.1017/S0143385712000442},
       URL = {https://doi.org/10.1017/S0143385712000442},
}

@book {Walters,
    AUTHOR = {Walters, Peter},
     TITLE = {An introduction to ergodic theory},
    SERIES = {Graduate Texts in Mathematics},
    VOLUME = {79},
 PUBLISHER = {Springer-Verlag, New York-Berlin},
      YEAR = {1982},
     PAGES = {ix+250},
      ISBN = {0-387-90599-5},
   MRCLASS = {28Dxx (54H20 58F11)},
  MRNUMBER = {648108},
MRREVIEWER = {M.\ A.\ Akcoglu},
}

@article {Ruette_natural_extension,
    AUTHOR = {Ruette, Sylvie},
     TITLE = {Mixing {$C^r$} maps of the interval without maximal measure},
   JOURNAL = {Israel J. Math.},
  FJOURNAL = {Israel Journal of Mathematics},
    VOLUME = {127},
      YEAR = {2002},
     PAGES = {253--277},
      ISSN = {0021-2172,1565-8511},
   MRCLASS = {37E05 (37A25 37A35 37B40)},
  MRNUMBER = {1900702},
MRREVIEWER = {Adriana\ Berechet},
       DOI = {10.1007/BF02784534},
       URL = {https://doi.org/10.1007/BF02784534},
}

@book {Petersen,
    AUTHOR = {Petersen, Karl},
     TITLE = {Ergodic theory},
    SERIES = {Cambridge Studies in Advanced Mathematics},
    VOLUME = {2},
      NOTE = {Corrected reprint of the 1983 original},
 PUBLISHER = {Cambridge University Press, Cambridge},
      YEAR = {1989},
     PAGES = {xii+329},
      ISBN = {0-521-38997-6},
   MRCLASS = {28Dxx (28-02 47A35 54H20 58F11)},
  MRNUMBER = {1073173},
MRREVIEWER = {Nathaniel\ Friedman},
}

@article {Baldwin,
    AUTHOR = {Baldwin, Stewart},
     TITLE = {Entropy estimates for transitive maps on trees},
   JOURNAL = {Topology},
  FJOURNAL = {Topology. An International Journal of Mathematics},
    VOLUME = {40},
      YEAR = {2001},
    NUMBER = {3},
     PAGES = {551--569},
      ISSN = {0040-9383},
   MRCLASS = {37B40 (37E25 54C70 54H20)},
  MRNUMBER = {1838995},
MRREVIEWER = {Joan\ Carles\ Tatjer},
       DOI = {10.1016/S0040-9383(99)00074-9},
       URL = {https://doi.org/10.1016/S0040-9383(99)00074-9},
}

@book {Parthasarathy,
    AUTHOR = {Parthasarathy, K. R.},
     TITLE = {Probability measures on metric spaces},
    SERIES = {Probability and Mathematical Statistics},
    VOLUME = {No. 3},
 PUBLISHER = {Academic Press, Inc., New York-London},
      YEAR = {1967},
     PAGES = {xi+276},
   MRCLASS = {60.00},
  MRNUMBER = {226684},
MRREVIEWER = {R.\ A.\ Gangolli},
}

@book {Kechris,
    AUTHOR = {Kechris, Alexander S.},
     TITLE = {Classical descriptive set theory},
    SERIES = {Graduate Texts in Mathematics},
    VOLUME = {156},
 PUBLISHER = {Springer-Verlag, New York},
      YEAR = {1995},
     PAGES = {xviii+402},
      ISBN = {0-387-94374-9},
   MRCLASS = {03E15 (03-01 03-02 04A15 28A05 54H05 90D44)},
  MRNUMBER = {1321597},
MRREVIEWER = {Jakub\ Jasi\'nski},
       DOI = {10.1007/978-1-4612-4190-4},
       URL = {https://doi.org/10.1007/978-1-4612-4190-4},
}

@book {Nadler,
    AUTHOR = {Nadler, Jr., Sam B.},
     TITLE = {Continuum theory},
    SERIES = {Monographs and Textbooks in Pure and Applied Mathematics},
    VOLUME = {158},
      NOTE = {An introduction},
 PUBLISHER = {Marcel Dekker, Inc., New York},
      YEAR = {1992},
     PAGES = {xiv+328},
      ISBN = {0-8247-8659-9},
   MRCLASS = {54-02 (54C05 54E45 54F15 54F50)},
  MRNUMBER = {1192552},
MRREVIEWER = {Hidefumi\ Katsuura},
}

@misc{Buzzi2025,
  doi = {10.48550/ARXIV.2501.07455},
  url = {https://arxiv.org/abs/2501.07455},
  author = {Buzzi,  Jér\^ome and Crovisier,  Sylvain and Sarig,  Omri},
  title = {Strong positive recurrence and exponential mixing for diffeomorphisms},
  publisher = {arXiv},
  year = {2025},
  copyright = {arXiv.org perpetual,  non-exclusive license}
}

@article{Newhouse1989,
  title = {Continuity Properties of Entropy},
  volume = {129},
  ISSN = {0003-486X},
  url = {http://dx.doi.org/10.2307/1971492},
  DOI = {10.2307/1971492},
  number = {1},
  journal = {The Annals of Mathematics},
  publisher = {JSTOR},
  author = {Newhouse,  Sheldon E.},
  year = {1989},
  month = Jan,
  pages = {215}
}

@article{Buzzi2022,
  title = {Measures of maximal entropy for surface diffeomorphisms},
  volume = {195},
  ISSN = {0003-486X},
  url = {http://dx.doi.org/10.4007/annals.2022.195.2.2},
  DOI = {10.4007/annals.2022.195.2.2},
  number = {2},
  journal = {Annals of Mathematics},
  publisher = {Annals of Mathematics},
  author = {Buzzi,  Jér\^ome and Crovisier,  Sylvain and Sarig,  Omri},
  year = {2022},
  month = Mar 
}

@article{Gouzel2010,
  title = {Almost sure invariance principle for dynamical systems by spectral methods},
  volume = {38},
  ISSN = {0091-1798},
  url = {http://dx.doi.org/10.1214/10-AOP525},
  DOI = {10.1214/10-aop525},
  number = {4},
  journal = {The Annals of Probability},
  publisher = {Institute of Mathematical Statistics},
  author = {Gouëzel,  Sébastien},
  year = {2010},
  month = {July} 
}

@article{Hardy1988,
     author = {Guivarc'h, Y. and Hardy, J.},
     title = {Th\'eor\`emes limites pour une classe de cha{\^\i}nes de {Markov} et applications aux diff\'eomorphismes {d'Anosov}},
     journal = {Annales de l'I.H.P. Probabilit\'es et statistiques},
     pages = {73--98},
     year = {1988},
     publisher = {Gauthier-Villars},
     volume = {24},
     number = {1},
     mrnumber = {937957},
     zbl = {0649.60041},
     language = {fr},
     url = {https://www.numdam.org/item/AIHPB_1988__24_1_73_0/}
}

@article{Kifer1990,
  title = {Large Deviations in Dynamical Systems and Stochastic Processes},
  volume = {321},
  ISSN = {0002-9947},
  url = {http://dx.doi.org/10.2307/2001571},
  DOI = {10.2307/2001571},
  number = {2},
  journal = {Transactions of the American Mathematical Society},
  publisher = {JSTOR},
  author = {Kifer,  Yuri},
  year = {1990},
  month = Oct,
  pages = {505}
}

@book{Pollicott1990,
     author = {Parry, William and Pollicott, Mark},
     title = {Zeta functions and the periodic orbit structure of hyperbolic dynamics},
     series = {Ast\'erisque},
     year = {1990},
     publisher = {Soci\'et\'e math\'ematique de France},
     number = {187-188},
     zbl = {0726.58003},
     language = {en},
     url = {https://www.numdam.org/item/AST_1990__187-188__1_0/}
}

@book{Ruelle2004,
  title = {Thermodynamic Formalism: The Mathematical Structure of Equilibrium Statistical Mechanics},
  ISBN = {9780511617546},
  url = {http://dx.doi.org/10.1017/CBO9780511617546},
  DOI = {10.1017/cbo9780511617546},
  publisher = {Cambridge University Press},
  author = {Ruelle,  David},
  year = {2004},
  month = Nov 
}

@article{Spitalsky2013,
     AUTHOR = {\v{S}pitalsk\'{y}, Vladim\'{\i}r},
     TITLE = {Topological entropy of transitive dendrite maps},
   JOURNAL = {Ergodic Theory Dynam. Systems},
  FJOURNAL = {Ergodic Theory and Dynamical Systems},
    VOLUME = {35},
      YEAR = {2015},
    NUMBER = {4},
     PAGES = {1289--1314},
      ISSN = {0143-3857,1469-4417},
   MRCLASS = {37B40 (37E25 54F50)},
  MRNUMBER = {3345173},
MRREVIEWER = {Marek\ Lampart},
       DOI = {10.1017/etds.2013.97},
       URL = {https://doi.org/10.1017/etds.2013.97},
}

@article{BARTO2019,
    AUTHOR = {Barto\v{s}, Adam and Bobok, Jozef and Pyrih, Pavel and Roth,
              Samuel and Vejnar, Benjamin},
     TITLE = {Constant slope, entropy, and horseshoes for a map on a tame
              graph},
   JOURNAL = {Ergodic Theory Dynam. Systems},
  FJOURNAL = {Ergodic Theory and Dynamical Systems},
    VOLUME = {40},
      YEAR = {2020},
    NUMBER = {11},
     PAGES = {2970--2994},
      ISSN = {0143-3857,1469-4417},
   MRCLASS = {37B45 (37B40 37E25)},
  MRNUMBER = {4157471},
MRREVIEWER = {Antonio\ Linero Bas},
       DOI = {10.1017/etds.2019.29},
       URL = {https://doi.org/10.1017/etds.2019.29}
}

@article{Bobok2016,
  title = {Constant Slope Maps and the Vere-Jones Classification},
  volume = {18},
  ISSN = {1099-4300},
  url = {http://dx.doi.org/10.3390/e18060234},
  DOI = {10.3390/e18060234},
  number = {6},
  journal = {Entropy},
  publisher = {MDPI AG},
  author = {Bobok,  Jozef and Bruin,  Henk},
  year = {2016},
  month = {June},
  pages = {234}
}

@article {Bobok2019,
    AUTHOR = {Bobok, Jozef and Roth, Samuel},
     TITLE = {The infimum of {L}ipschitz constants in the conjugacy class of
              an interval map},
   JOURNAL = {Proc. Amer. Math. Soc.},
  FJOURNAL = {Proceedings of the American Mathematical Society},
    VOLUME = {147},
      YEAR = {2019},
    NUMBER = {1},
     PAGES = {255--269},
      ISSN = {0002-9939,1088-6826},
   MRCLASS = {37E05 (26A16 37B40)},
  MRNUMBER = {3876747},
MRREVIEWER = {Steven\ M.\ Pederson},
       DOI = {10.1090/proc/14255},
       URL = {https://doi.org/10.1090/proc/14255},
}

@article {Buzzi2006,
    AUTHOR = {Buzzi, J\'{e}r\^{o}me and Ruette, Sylvie},
     TITLE = {Large entropy implies existence of a maximal entropy measure
              for interval maps},
   JOURNAL = {Discrete Contin. Dyn. Syst.},
  FJOURNAL = {Discrete and Continuous Dynamical Systems},
    VOLUME = {14},
      YEAR = {2006},
    NUMBER = {4},
     PAGES = {673--688},
      ISSN = {1078-0947,1553-5231},
   MRCLASS = {37E05 (37A35 37B40 37C40)},
  MRNUMBER = {2177091},
MRREVIEWER = {Jos\'{e}\ S.\ C\'{a}novas Pe\~{n}a},
       DOI = {10.3934/dcds.2006.14.673},
       URL = {https://doi.org/10.3934/dcds.2006.14.673},
}

@article {Boyle2006,
    AUTHOR = {Boyle, Mike and Buzzi, Jerome and G\'{o}mez, Ricardo},
     TITLE = {Almost isomorphism for countable state {M}arkov shifts},
   JOURNAL = {J. Reine Angew. Math.},
  FJOURNAL = {Journal f\"{u}r die Reine und Angewandte Mathematik. [Crelle's
              Journal]},
    VOLUME = {592},
      YEAR = {2006},
     PAGES = {23--47},
      ISSN = {0075-4102,1435-5345},
   MRCLASS = {37B10 (28D20 37A25 37B40 37D25)},
  MRNUMBER = {2222728},
MRREVIEWER = {Heber\ Enrich},
       DOI = {10.1515/CRELLE.2006.021},
       URL = {https://doi.org/10.1515/CRELLE.2006.021},
}

@article {Thermo1,
    AUTHOR = {Buzzi, J\'{e}r\^{o}me and Sarig, Omri},
     TITLE = {Uniqueness of equilibrium measures for countable {M}arkov
              shifts and multidimensional piecewise expanding maps},
   JOURNAL = {Ergodic Theory Dynam. Systems},
  FJOURNAL = {Ergodic Theory and Dynamical Systems},
    VOLUME = {23},
      YEAR = {2003},
    NUMBER = {5},
     PAGES = {1383--1400},
      ISSN = {0143-3857,1469-4417},
   MRCLASS = {37D35 (37A05 37B40 37F15)},
  MRNUMBER = {2018604},
MRREVIEWER = {Heber\ Enrich},
       DOI = {10.1017/S0143385703000087},
       URL = {https://doi.org/10.1017/S0143385703000087},
}

@article {Thermo2,
    AUTHOR = {Fiebig, Doris and Fiebig, Ulf-Rainer and Yuri, Michiko},
     TITLE = {Pressure and equilibrium states for countable state {M}arkov
              shifts},
   JOURNAL = {Israel J. Math.},
  FJOURNAL = {Israel Journal of Mathematics},
    VOLUME = {131},
      YEAR = {2002},
     PAGES = {221--257},
      ISSN = {0021-2172,1565-8511},
   MRCLASS = {37D35 (37A05 37A35 37B40)},
  MRNUMBER = {1942310},
MRREVIEWER = {Dieter\ H.\ Mayer},
       DOI = {10.1007/BF02785859},
       URL = {https://doi.org/10.1007/BF02785859},
}

@article {Thermo3,
    AUTHOR = {Gurevich, B. M. and Savchenko, S. V.},
     TITLE = {Thermodynamic formalism for symbolic {M}arkov chains with a
              countable number of states},
   JOURNAL = {Uspekhi Mat. Nauk},
  FJOURNAL = {Uspekhi Matematicheskikh Nauk},
    VOLUME = {53},
      YEAR = {1998},
    NUMBER = {2(320)},
     PAGES = {3--106},
      ISSN = {0042-1316,2305-2872},
   MRCLASS = {28D15 (37B10 82B20)},
  MRNUMBER = {1639451},
MRREVIEWER = {Vadim\ A.\ Ka\u{\i}manovich},
       DOI = {10.1070/rm1998v053n02ABEH000017},
       URL = {https://doi.org/10.1070/rm1998v053n02ABEH000017},
}

@article {Thermo4,
    AUTHOR = {Mauldin, R. Daniel and Urba\'{n}ski, Mariusz},
     TITLE = {Gibbs states on the symbolic space over an infinite alphabet},
   JOURNAL = {Israel J. Math.},
  FJOURNAL = {Israel Journal of Mathematics},
    VOLUME = {125},
      YEAR = {2001},
     PAGES = {93--130},
      ISSN = {0021-2172,1565-8511},
   MRCLASS = {37D35 (28D05 37A60 37B10 82B05)},
  MRNUMBER = {1853808},
MRREVIEWER = {Anthony\ Quas},
       DOI = {10.1007/BF02773377},
       URL = {https://doi.org/10.1007/BF02773377},
}

@article {Thermo5,
    AUTHOR = {Sarig, Omri M.},
     TITLE = {Thermodynamic formalism for countable {M}arkov shifts},
   JOURNAL = {Ergodic Theory Dynam. Systems},
  FJOURNAL = {Ergodic Theory and Dynamical Systems},
    VOLUME = {19},
      YEAR = {1999},
    NUMBER = {6},
     PAGES = {1565--1593},
      ISSN = {0143-3857,1469-4417},
   MRCLASS = {37A99 (37B99 37C30 37D35)},
  MRNUMBER = {1738951},
MRREVIEWER = {Irene\ Hueter},
       DOI = {10.1017/S0143385799146820},
       URL = {https://doi.org/10.1017/S0143385799146820},
}

@article {Thermo6,
    AUTHOR = {Sarig, Omri M.},
     TITLE = {Thermodynamic formalism for null recurrent potentials},
   JOURNAL = {Israel J. Math.},
  FJOURNAL = {Israel Journal of Mathematics},
    VOLUME = {121},
      YEAR = {2001},
     PAGES = {285--311},
      ISSN = {0021-2172,1565-8511},
   MRCLASS = {37D35 (37C30)},
  MRNUMBER = {1818392},
MRREVIEWER = {Pei\ Dong\ Liu},
       DOI = {10.1007/BF02802508},
       URL = {https://doi.org/10.1007/BF02802508},
}

@article {Thermo7,
    AUTHOR = {Sarig, Omri},
     TITLE = {Existence of {G}ibbs measures for countable {M}arkov shifts},
   JOURNAL = {Proc. Amer. Math. Soc.},
  FJOURNAL = {Proceedings of the American Mathematical Society},
    VOLUME = {131},
      YEAR = {2003},
    NUMBER = {6},
     PAGES = {1751--1758},
      ISSN = {0002-9939,1088-6826},
   MRCLASS = {37D35 (37A99 37B10)},
  MRNUMBER = {1955261},
MRREVIEWER = {Yves\ Coudene},
       DOI = {10.1090/S0002-9939-03-06927-2},
       URL = {https://doi.org/10.1090/S0002-9939-03-06927-2},
}

@article {Encode1,
    AUTHOR = {Buzzi, J\'{e}r\^{o}me},
     TITLE = {Intrinsic ergodicity of smooth interval maps},
   JOURNAL = {Israel J. Math.},
  FJOURNAL = {Israel Journal of Mathematics},
    VOLUME = {100},
      YEAR = {1997},
     PAGES = {125--161},
      ISSN = {0021-2172,1565-8511},
   MRCLASS = {58F11 (28D99)},
  MRNUMBER = {1469107},
MRREVIEWER = {Jan\ Kwiatkowski},
       DOI = {10.1007/BF02773637},
       URL = {https://doi.org/10.1007/BF02773637},
}

@article {Encode2,
    AUTHOR = {Hofbauer, Franz},
     TITLE = {On intrinsic ergodicity of piecewise monotonic transformations
              with positive entropy},
   JOURNAL = {Israel J. Math.},
  FJOURNAL = {Israel Journal of Mathematics},
    VOLUME = {34},
      YEAR = {1979},
    NUMBER = {3},
     PAGES = {213--237 (1980)},
      ISSN = {0021-2172},
   MRCLASS = {28D20 (58F11)},
  MRNUMBER = {570882},
MRREVIEWER = {Radu\ Nicolae\ Gologan},
       DOI = {10.1007/BF02760884},
       URL = {https://doi.org/10.1007/BF02760884},
}

@article {Encode3,
    AUTHOR = {Hofbauer, Franz},
     TITLE = {On intrinsic ergodicity of piecewise monotonic transformations
              with positive entropy. {II}},
   JOURNAL = {Israel J. Math.},
  FJOURNAL = {Israel Journal of Mathematics},
    VOLUME = {38},
      YEAR = {1981},
    NUMBER = {1-2},
     PAGES = {107--115},
      ISSN = {0021-2172},
   MRCLASS = {28D20 (58F11)},
  MRNUMBER = {599481},
MRREVIEWER = {Radu\ Nicolae\ Gologan},
       DOI = {10.1007/BF02761854},
       URL = {https://doi.org/10.1007/BF02761854},
}

@article {Encode4,
    AUTHOR = {Hofbauer, Franz},
     TITLE = {Piecewise invertible dynamical systems},
   JOURNAL = {Probab. Theory Relat. Fields},
  FJOURNAL = {Probability Theory and Related Fields},
    VOLUME = {72},
      YEAR = {1986},
    NUMBER = {3},
     PAGES = {359--386},
      ISSN = {0178-8051,1432-2064},
   MRCLASS = {58F08 (54H20 58F11)},
  MRNUMBER = {843500},
MRREVIEWER = {Takashi\ Shimano},
       DOI = {10.1007/BF00334191},
       URL = {https://doi.org/10.1007/BF00334191},
}

@article {Encode5,
    AUTHOR = {Keller, Gerhard},
     TITLE = {Lifting measures to {M}arkov extensions},
   JOURNAL = {Monatsh. Math.},
  FJOURNAL = {Monatshefte f\"{u}r Mathematik},
    VOLUME = {108},
      YEAR = {1989},
    NUMBER = {2-3},
     PAGES = {183--200},
      ISSN = {0026-9255,1436-5081},
   MRCLASS = {28D05 (54H20)},
  MRNUMBER = {1026617},
MRREVIEWER = {Vladimir\ L.\ Levin},
       DOI = {10.1007/BF01308670},
       URL = {https://doi.org/10.1007/BF01308670},
}

@article {Encode6,
    AUTHOR = {Urba\'{n}ski, Mariusz and Zdunik, Anna},
     TITLE = {Hausdorff dimension of harmonic measure for self-conformal
              sets},
   JOURNAL = {Adv. Math.},
  FJOURNAL = {Advances in Mathematics},
    VOLUME = {171},
      YEAR = {2002},
    NUMBER = {1},
     PAGES = {1--58},
      ISSN = {0001-8708,1090-2082},
   MRCLASS = {28A80 (37A05 37C45 37F10 37F50)},
  MRNUMBER = {1933383},
MRREVIEWER = {Emilia\ Petrisor},
       DOI = {10.1006/aima.2001.2067},
       URL = {https://doi.org/10.1006/aima.2001.2067},
}

@article {Encode7,
    AUTHOR = {Young, Lai-Sang},
     TITLE = {Recurrence times and rates of mixing},
   JOURNAL = {Israel J. Math.},
  FJOURNAL = {Israel Journal of Mathematics},
    VOLUME = {110},
      YEAR = {1999},
     PAGES = {153--188},
      ISSN = {0021-2172,1565-8511},
   MRCLASS = {37D25 (37A25 37D50)},
  MRNUMBER = {1750438},
MRREVIEWER = {Benoit\ Saussol},
       DOI = {10.1007/BF02808180},
       URL = {https://doi.org/10.1007/BF02808180},
}

@article {Coding1,
    AUTHOR = {Fiebig, Doris},
     TITLE = {Common extensions for locally compact {M}arkov shifts},
   JOURNAL = {Monatsh. Math.},
  FJOURNAL = {Monatshefte f\"{u}r Mathematik},
    VOLUME = {132},
      YEAR = {2001},
    NUMBER = {4},
     PAGES = {289--301},
      ISSN = {0026-9255,1436-5081},
   MRCLASS = {37B99},
  MRNUMBER = {1844068},
MRREVIEWER = {Fred\ W.\ Roush},
       DOI = {10.1007/s006050170035},
       URL = {https://doi.org/10.1007/s006050170035},
}

@article {Coding2,
    AUTHOR = {Fiebig, Doris},
     TITLE = {Graphs with pre-assigned {S}alama entropies and optimal
              degrees for locally compact {M}arkov shifts},
   JOURNAL = {Ergodic Theory Dynam. Systems},
  FJOURNAL = {Ergodic Theory and Dynamical Systems},
    VOLUME = {23},
      YEAR = {2003},
    NUMBER = {4},
     PAGES = {1093--1124},
      ISSN = {0143-3857,1469-4417},
   MRCLASS = {37B10 (05C20 37B40)},
  MRNUMBER = {1997969},
MRREVIEWER = {Fred\ W.\ Roush},
       DOI = {10.1017/S014338570200161X},
       URL = {https://doi.org/10.1017/S014338570200161X},
}

@article {Coding4,
    AUTHOR = {Fiebig, Doris and Fiebig, Ulf-Rainer},
     TITLE = {Entropy and finite generators for locally compact subshifts},
   JOURNAL = {Ergodic Theory Dynam. Systems},
  FJOURNAL = {Ergodic Theory and Dynamical Systems},
    VOLUME = {17},
      YEAR = {1997},
    NUMBER = {2},
     PAGES = {349--368},
      ISSN = {0143-3857,1469-4417},
   MRCLASS = {58F03 (28D20)},
  MRNUMBER = {1444058},
MRREVIEWER = {Fred\ W.\ Roush},
       DOI = {10.1017/S0143385797069873},
       URL = {https://doi.org/10.1017/S0143385797069873},
}

@article {Coding3,
    AUTHOR = {Fiebig, Doris},
     TITLE = {Factor theorems for locally compact {M}arkov shifts},
   JOURNAL = {Forum Math.},
  FJOURNAL = {Forum Mathematicum},
    VOLUME = {14},
      YEAR = {2002},
    NUMBER = {4},
     PAGES = {623--640},
      ISSN = {0933-7741,1435-5337},
   MRCLASS = {37B10 (37B40)},
  MRNUMBER = {1900175},
MRREVIEWER = {Kathleen\ M.\ Madden},
       DOI = {10.1515/form.2002.027},
       URL = {https://doi.org/10.1515/form.2002.027},
}

@article {Coding3.5,
    AUTHOR = {Fiebig, Doris and Roy, Mario},
     TITLE = {Factor theorems for locally compact {M}arkov shifts. {II}},
   JOURNAL = {Forum Math.},
  FJOURNAL = {Forum Mathematicum},
    VOLUME = {18},
      YEAR = {2006},
    NUMBER = {2},
     PAGES = {323--344},
      ISSN = {0933-7741,1435-5337},
   MRCLASS = {37B10 (37B40)},
  MRNUMBER = {2218424},
MRREVIEWER = {Kathleen\ M.\ Madden},
       DOI = {10.1515/FORUM.2006.019},
       URL = {https://doi.org/10.1515/FORUM.2006.019},
}

\end{document}